\newtheorem{theorem}{Theorem}[section]
\newtheorem*{theorem*}{Theorem}
\newtheorem{corollary}[theorem]{Corollary}
\newtheorem{definition}[theorem]{Definition}
\newtheorem{fact}[theorem]{Fact}
\newtheorem{lemma}[theorem]{Lemma}
\newtheorem{proposition}[theorem]{Proposition}
\newtheorem{question}[theorem]{Question}
\def\uh{\upharpoonright}
\newcommand{\CK}{\omega_1^{\mathrm{CK}}}
\newcommand{\KO}{\mathcal{O}}
\newcommand{\AC}{\mathrm{AC}}
\newcommand{\bi}{\begin{itemize}}
\newcommand{\ei}{\end{itemize}}
\newcommand{\bc}{\begin{center}}
\newcommand{\ec}{\end{center}}
\def\uh{\upharpoonright}
\newcommand{\D}{\mathcal{D}}
\newcommand{\R}{\mathbb{R}}
\newcommand{\wDC}{\mathrm{wDC}_{\mathbb{R}}}
\renewcommand{\DH}{\mathrm{Dim_H}}
\newcommand{\DP}{\mathrm{Dim_P}}
\newcommand{\ZF}{{\mathrm{ZF}}}
\newcommand{\TD}{{\mathrm{TD}}}
\newcommand{\sTD}{{\mathrm{sTD}}}
\newcommand{\AD}{{\mathrm{AD}}}
 \newcommand{\DCR}{{\mathrm{DC}_{\mathbb{R}}}}
\newcommand{\CCR}{{\mathrm{CC}_{\mathbb{R}}}}
\newcommand{\x}{{\mathbf{x}}}
\newcommand{\y}{{\mathbf{y}}}
\newcommand{\z}{{\mathbf{z}}}
\begin{document}
\title{Some consequences of $\TD$ and $\sTD$}
 \author{Yinhe Peng, Liuzhen Wu and Liang Yu }
 \thanks{We thank  Denis Hirschfeldt, Ramez  Sami,  Theodore  Slaman, and Hugh Woodin for their helpful discussions. Especially for Slaman's introducing Hausdorff dimension regularity to us.  Peng was partially supported by NSF of China No. 11901562 and the Hundred Talents Program of the Chinese Academy of Sciences. Wu was partially supported by NSFC No. 11871464. Yu was partially supported by NSF of China No. 11671196 and 12025103.}
 \address{Institute of Mathematics, Chinese Academy of Sciences\\ 
 East Zhong Guan Cun Road No. 55\\Beijing 100190\\China}
 \email{pengyinhe@amss.ac.cn}
  \address{HLM\\Academy of Mathematics and Systems Science\\Chinese Academy of Sciences\\East Zhong Guan Cun Road No. 55\\ Beijing 100190\\China }
 \email{lzwu@math.ac.cn}
\address{Department of Mathematics\ \\
Nanjing University, Jiangsu Province 210093\\
P. R. of China} \email{yuliang.nju@gmail.com.}
\subjclass[2010]{03D28, 03E05, 03E15, 03E25, 03E60, 28A80, 68Q30} 
 
\maketitle
\begin{abstract}
 Strongly Turing determinacy, or $\sTD$,  says that for any set $A$ of reals, if $\forall x\exists y\geq_T x (y\in A)$, then there is a pointed set $P\subseteq A$. We prove the following consequences of Turing determinacy ($\TD$) and  $\sTD$:
\begin{enumerate}
\item[(1)]$\ZF+\TD$ implies weakly dependent choice ($\wDC$).
\item[(2)]$\ZF+\sTD$ implies that every set of reals is measurable and has Baire property.
\item[(3)] $\ZF+\sTD$ implies that every uncountable set of reals has a perfect subset.
\item[(4)]$\ZF+\sTD$ implies that for any set of reals $A$ and any $\epsilon>0$, 
\begin{itemize}
\item[(a)] there is a closed set $F\subseteq A$ so that $\DH(F)\geq \DH(A)-\epsilon$.
\item[(b)]there is a closed set $F\subseteq A$ so that $\DP(F)\geq \DP(A)-\epsilon$.
\end{itemize}
\end{enumerate}
 \end{abstract}
\section{Introduction} \label{sec:introduction}
\subsection{$\TD$ and $\sTD$}
Turing reduction $\leq_T$ is a partial order over reals. It naturally induces an equivalence relation $\equiv_T$. Given a real $x$, its corresponded  Turing degree $\x$ is a set of reals defined as $\{y\mid y\equiv_T x\}$. We say $\x\leq \mathbf{y}$ if $x\leq_T y$.  We use $\D$ to denote the set of Turing degrees. An {\em upper cone}   of Turing degrees is the set $\{\mathbf{y}\mid \mathbf{y}\geq \mathbf{x}  \}$.

We say that a perfect set $P$ is {\em pointed} if there is a perfect tree $T\subseteq 2^{<\omega}$ so that $[T]=P$ and for any $x\in P$, $T\leq_T x$, where $[T]=\{x\in 2^{\omega}\mid \forall n (x\uh n \in T)\}$.
\begin{definition}
\begin{itemize}
\item  Turing determinacy,  or $\TD$, says that for any set $A$ of Turing degrees, either $A$ or $\D\setminus A$ contains an upper cone of Turing degrees.
\item  Strongly Turing determinacy, or $\sTD$,  says that for any set $A$ of reals, if $\forall x\exists y\geq_T x (y\in A)$, then there is a pointed set $P$ so that $P\subseteq A$.
\end{itemize}
\end{definition}

Martin proves the following famous theorem.
\begin{theorem}[Martin \cite{Martin68}] 
Over $\ZF$, Axiom of determinacy, or $\AD$,  implies $\TD$.
\end{theorem}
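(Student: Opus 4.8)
The plan is to reduce $\TD$ to a single instance of $\AD$ by coding a set of Turing degrees as a degree-invariant payoff set for a game in which the two players jointly build a real. Given a set $A\subseteq\D$ of Turing degrees, I would form the corresponding set of reals $A^{*}=\{x\in 2^{\omega}\mid \mathbf{x}\in A\}$, which is invariant under $\equiv_T$, and consider the length-$\omega$ game on $\{0,1\}$ in which Player I supplies the bits at even positions and Player II the bits at odd positions, the moves interleaving to produce a run $z\in 2^{\omega}$, with I winning iff $z\in A^{*}$. Since the payoff set is a subset of Cantor space, $\AD$ (after the routine translation between Cantor and Baire space, or upon stating $\AD$ directly for $2^{\omega}$) guarantees that this game is determined.

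Next I would split into the two cases given by determinacy. Suppose first that Player I has a winning strategy $\sigma$; the claim is that the upper cone of degrees above that of $\sigma$ lies entirely in $A$. To see this, fix any real $x\geq_T\sigma$, let Player II play the bits of $x$ while Player I follows $\sigma$, and write $z=\sigma * x$ for the resulting run. The key computation is that $z\equiv_T x$: the odd-position bits of $z$ are exactly $x$, so $x\leq_T z$, while $z$ is produced by running $\sigma$ against the moves of $x$, so $z\leq_T \sigma\oplus x\equiv_T x$ (using $\sigma\leq_T x$). Because $\sigma$ is winning for I we have $z\in A^{*}$, hence $\mathbf{z}\in A$, and since $\mathbf{x}=\mathbf{z}$ we conclude $\mathbf{x}\in A$. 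As $x\geq_T\sigma$ was arbitrary, the whole cone above $\sigma$ is contained in $A$.

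The symmetric case, when Player II has a winning strategy $\tau$, runs identically: letting Player I play an arbitrary $x\geq_T\tau$ against $\tau$ yields a run $z\equiv_T x$ that is lost by I, so $\mathbf{x}\notin A$, placing the cone above $\tau$ inside $\D\setminus A$. Either way one of $A$, $\D\setminus A$ contains an upper cone, which is exactly $\TD$. The only delicate point — and the step I would write out most carefully — is the Turing-degree bookkeeping behind $z\equiv_T x$: one must verify that the interleaving map and its inverse are uniformly computable, that the opponent's moves are recoverable from the run, and that the run is computable from the strategy together with the opponent's real. None of this is difficult, but it is where the argument actually lives; the appeal to $\AD$ and the passage to the cone conclusion are then immediate.
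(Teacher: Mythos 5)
Your proposal is correct, and it is precisely Martin's classical cone argument: form the degree-invariant payoff set $A^{*}$, apply determinacy, and observe that a run $z$ of the game against a winning strategy $\sigma$ played by an opponent holding $x\geq_T\sigma$ satisfies $z\equiv_T x$ (the opponent's moves are recoverable from $z$, and $z\leq_T\sigma\oplus x\equiv_T x$), so the cone above $\sigma$ lands inside $A$ or its complement according to which player wins. The paper itself states this theorem only as a citation to Martin without proof, so there is nothing to contrast; your treatment of the Cantor-versus-Baire-space translation and the Turing-degree bookkeeping is the standard and complete way to write it out.
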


\begin{definition}
\begin{itemize}
 \item Countable choice axiom for sets of reals,  or $\CCR$, says that for any countable sequence $\{A_n\}_{n\in \omega}$ of nonempty sets of reals, there is a function $f:\omega\to \mathbb{R}$ so that for every $n$, $f(n)\in A_n$.
 \item  Dependent choice axiom for sets of reals,  or $\DCR$, says that for any binary relation $R$ over reals so that $\forall x \exists y R(x,y)$, there is a function $f:\omega\to \mathbb{R}$ so that for every $n$, $R(f(n), f(n+1))$.
 \end{itemize}
\end{definition}

Turing determinacy is an important  and very useful consequence of $\AD$. First it is a neat statement, which looks more like an axiom than $\AD$. Second, in many situations, $\TD$ seems sufficient to be used to prove set theory theorems.  The following theorem justifies this phenomenon. 
\begin{theorem}[Woodin]\label{theorem:ad vs td in lr}
Assume $\ZF+V=L(\mathbb{R})+\DCR$. $\AD$ is equivalent to $\TD$.
\end{theorem}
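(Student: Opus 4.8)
The equivalence has two directions, and the direction $\AD \Rightarrow \TD$ is precisely Martin's theorem quoted above, so the entire content is to establish $\TD \Rightarrow \AD$ under the hypothesis $\ZF + V = L(\mathbb{R}) + \DCR$. The plan is to exploit two features of $L(\mathbb{R})$ that $\TD$ alone does not provide. First, $\TD$ lets me define the \emph{Martin (cone) measure} $\mu$ on the Turing degrees $\D$ by declaring $\mu(A) = 1$ iff $A$ contains an upper cone; $\TD$ says exactly that $\mu$ is a total $\{0,1\}$-valued measure, and $\DCR$ (which lets me choose degree representatives and form countable joins) upgrades $\mu$ to a countably complete ultrafilter, since the countable intersection of cones again contains a cone. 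Second, it is a $\ZF$-theorem about $L(\mathbb{R})$ that every set of reals is $\mathrm{OD}$ from a real and that these $\mathrm{OD}_{\mathbb{R}}$ sets carry a canonical definable wellordering; this is what makes a minimal-counterexample argument possible.

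Assuming toward a contradiction that some set of reals is not determined, I would use the stratification of $L(\mathbb{R})$ by the levels $J_\alpha(\mathbb{R})$ together with the canonical wellordering to isolate a least non-determined set $A$: least ordinal stage, then least real parameter, then least in the definability order. By this minimality, every set of strictly smaller complexity is determined. The goal is to derive determinacy of the game $G(A)$ from this induction hypothesis together with the measure $\mu$, contradicting the choice of $A$ and thereby yielding $\AD$ (after reducing, as one always may in $L(\mathbb{R})$, to determinacy of $\mathrm{OD}_{\mathbb{R}}$ games).

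The heart of the argument is a coding step that turns $G(A)$ into a degree-invariant game. Given a run $x \in \mathbb{R}$ I would pass to the Turing degree of $x$ joined with the real parameter defining $A$, and use the induction hypothesis — the determinacy of all simpler auxiliary payoff sets — to replace $A$ by an essentially degree-invariant payoff. Applying $\TD$, equivalently $\mu$, to this degree game produces a strategy winning on an entire cone of degrees. The main obstacle, and the genuinely Woodin-style part of the argument, is the \emph{pull-back}: converting a strategy that wins only on a cone into an honest winning strategy for $G(A)$ against all plays. I expect this to require combining the countable completeness of $\mu$, the homogeneity it induces on the relevant tree, and $\Sigma_1$-reflection inside $L(\mathbb{R})$; the natural route is to show that the tree of the degree game is homogeneous for measures built from $\mu$, so that a Martin-style ``integer game equals real game'' analysis lets the cone strategy descend to a literal winning strategy.

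Finally, I would record where each hypothesis is indispensable: $V = L(\mathbb{R})$ supplies the $\mathrm{OD}_{\mathbb{R}}$-definability and canonical wellordering that make both the minimal counterexample and the reflection meaningful, while $\DCR$ is needed for the countable completeness of $\mu$, for running the recursion on $\alpha$, and for choosing representatives throughout. I anticipate that the measure-theoretic and reflection set-up is comparatively routine once $\mu$ is in hand, and that essentially all of the difficulty is concentrated in the homogeneity and pull-back step.
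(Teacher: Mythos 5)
The paper offers no proof of this statement: it is quoted as an unpublished theorem of Woodin, so there is nothing in the paper to compare your attempt against, and it must be judged on its own terms. On those terms it is an outline rather than a proof, and the gap sits exactly where you yourself locate the difficulty. The preliminary material is fine: the Martin cone measure $\mu$, its countable completeness (which in fact needs only $\CCR$ — choose base points of countably many cones and take the join), the $\mathrm{OD}$-from-a-real wellordering of $L(\mathbb{R})$, and the selection of a minimal non-determined set are all standard. But the two load-bearing steps are asserted, not constructed: (i) the ``coding step'' that is supposed to turn $G(A)$ into a degree-invariant game whose cone behaviour reflects winning in $G(A)$, and (ii) the ``pull-back'' of a strategy winning on a cone to an honest winning strategy. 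For (ii) you say you \emph{expect} it to follow from countable completeness of $\mu$, homogeneity of the relevant tree, and $\Sigma_1$-reflection; but establishing that the relevant trees are homogeneous is precisely the content of the theorem, not a tool available in advance. That every set of reals in $L(\mathbb{R})$ is homogeneously Suslin is a deep consequence of $\AD$ (Martin--Steel--Woodin), so invoking homogeneity here is circular unless you can manufacture it from $\TD$ alone — and nothing in the sketch does so.

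Concretely, what is missing is any mechanism that converts ``all sets of smaller complexity are determined'' plus the cone measure into determinacy of $A$. In the known approach to this theorem, the first substantial step is to show that $\TD$ already implies $x^{\#}$ exists for every real $x$, and the induction along the levels of $L(\mathbb{R})$ is then driven by inner-model-theoretic machinery built on those sharps; the measure $\mu$ by itself, even countably complete, is far too weak to run a Martin-style ``integer game versus real game'' analysis for arbitrary payoff sets. So the assessment in your last paragraph should be inverted: the measure-theoretic and definability set-up is indeed routine, but it constitutes essentially none of the proof, and the step you defer — homogeneity and pull-back — is not a step but the whole theorem.
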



Moreover, as we shall see in this paper, different than $\AD$ which often needs a very genius, tricky and case-by-case design of games, $\TD$ (and $\sTD$) usually provides a more consistent way to solve problems in set theory.

The first result in this paper concerns the relationship between $\AD$ and Axiom of Choice, or $\AC$.

Though $\AD$ contradicts to $\AC$,  Mycielski proves the following theorem.
\begin{theorem}[Mycielski \cite{MYC64}]
Over $\ZF$,  $\AD$ implies $\CCR$.
\end{theorem}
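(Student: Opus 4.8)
The plan is to deduce $\CCR$ from $\AD$ by coding a single countable choice problem into one infinite game and then exploiting the fact that a winning strategy is one object which can be evaluated at many different inputs. Fix a sequence $\{A_n\}_{n\in\w}$ of nonempty sets of reals, and consider the game $G$ in which the two players alternately play natural numbers, producing $a_0,b_0,a_1,b_1,\dots$. I would interpret player I's first move $a_0$ as an index $n$, interpret player II's moves $y=(b_0,b_1,\dots)$ as a real, and declare that player II wins the run exactly when $y\in A_{a_0}$. (If one prefers reals in $\cs$, the same works after fixing a recursive coding of $\w$ into an initial block of moves.)

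First I would show, in $\ZF$ alone, that player I has no winning strategy. Given any strategy $\sigma$ for player I, let $n=\sigma(\ES)$ be its first move. Since $A_n\neq\ES$, there \emph{exists} some $z\in A_n$; letting player II respond with $b_i=z(i)$ for all $i$ produces a single run consistent with $\sigma$ whose outcome is $z\in A_n$, so $\sigma$ loses this run and is therefore not winning. The point to stress is that this step uses only one existential quantifier — the bare nonemptiness of $A_n$ — applied to one fixed $\sigma$ at a time, and never selects representatives from all the $A_n$ simultaneously, so no form of choice is invoked.

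By $\AD$ the game $G$ is determined, and since player I has no winning strategy, player II has a winning strategy $\tau$. Now I would extract the choice function uniformly from this single object: for each $n$ let player I follow the canonical run $a_0=n$, $a_1=a_2=\cdots=0$, and let $f(n)$ be the real $(b_0,b_1,\dots)$ that $\tau$ produces against this fixed run. Because $\tau$ is winning, $f(n)\in A_n$ for every $n$; and because $\tau$ together with the canonical plays of player I are fixed objects, $f\colon\w\to\RR$ is a genuinely well-defined function requiring no choice to specify. This $f$ witnesses $\CCR$.

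The only real subtlety — and the step I would be most careful about — is the bookkeeping that keeps the entire argument inside $\ZF$. It is tempting to justify ``player I has no winning strategy'' by simply remarking that player II can play any prescribed element of $A_n$, but one must ensure that at no point is a simultaneous choice of representatives $z_n\in A_n$ secretly made; the argument above avoids this by working with an arbitrary but fixed $\sigma$. Dually, the extraction of $f$ must read values off of one fixed $\tau$ rather than off a family of strategies indexed by $n$. Getting both of these exactly right is what makes the conclusion a genuine theorem of $\ZF$ rather than a covert appeal to the choice we are trying to derive.
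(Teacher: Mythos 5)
Your proof is correct: this is precisely the classical Mycielski game argument, and the paper itself gives no proof of this statement (it is quoted as a known theorem of Mycielski \cite{MYC64}), so there is nothing to diverge from. The two points you flag as delicate --- defeating a fixed strategy $\sigma$ for player I via a single existential instantiation in $A_{\sigma(\ES)}$, and reading all values $f(n)$ off the one winning strategy $\tau$ for player II --- are exactly the points that make the argument go through in $\ZF$; the only remaining (routine) detail is to fix once and for all a $\ZF$-definable Borel isomorphism between $\mathbb{R}$ and $\omega^{\omega}$ so that ``player II's moves form a real'' is literally meaningful.
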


The question if $\AD$ implies $\DCR$ remains a long time. 
\begin{question}[Solovay]\label{querstion: ad vs dc}
Over $\ZF$, does $\AD$ imply $\DCR$?
\end{question}

Kechris proves the following result.
\begin{theorem}[Kechris \cite{Kechris84}]
Assuem $\ZF+V=L(\mathbb{R})$, $\AD$ implies $\DCR$.
\end{theorem}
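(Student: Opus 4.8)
The plan is to prove $\DCR$ by manufacturing \emph{canonical} witnesses using determinacy and then iterating them, rather than choosing witnesses one at a time. Recall that by Mycielski's theorem $\ZF+\AD$ already yields $\CCR$, so the genuine content is the upgrade from countable choice, where the $n$-th choice is independent, to dependent choice, where the $n$-th choice constrains the $(n{+}1)$-st. The obstruction is precisely that we cannot wellorder $\mathbb{R}$, so we cannot pick an $R$-successor by fiat; determinacy must supply the selection definably. The engine I would use is the scale property for the pointclass $\Sigma_1$ over $L(\mathbb{R})$ (with real parameters), which holds under $\AD$ and is the one-step-up analogue of the Kond\^o--Moschovakis uniformization for $\Pi^1_1$ and $\Sigma^1_2$.

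The key observation is that $\DCR$ holds \emph{outright}, with no appeal to choice, for relations in a scaled pointclass. Suppose $R\subseteq\mathbb{R}\times\mathbb{R}$ is total, i.e. $\forall x\exists y\,R(x,y)$, and is $\Sigma_1$ over $L(\mathbb{R})$ in a real parameter. The scale property gives the uniformization property, so there is a single function $f$, with graph in the same pointclass, satisfying $R(x,f(x))$ for every $x$. Given any starting real $x_0$, define $g\colon\omega\to\mathbb{R}$ by recursion on $\omega$ via $g(0)=x_0$ and $g(n+1)=f(g(n))$; this is a legitimate $\ZF$ recursion with a fixed total function $f$ and uses no choice whatsoever, and $g$ is an $R$-chain. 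Thus the whole difficulty is concentrated in arranging that the relevant relation lives in a scaled pointclass with a real parameter.

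It remains to reduce an arbitrary total $R\in L(\mathbb{R})$ to this definable case. Here I would exploit the fine structure of $L(\mathbb{R})$: it carries a $\Sigma_1$ Skolem function built from the reals, so every relation is $\Sigma_1$ over $L(\mathbb{R})$ in a real parameter $z$ and an ordinal parameter $\alpha$. The real parameter causes no trouble; the ordinal $\alpha<\Theta$ is the whole problem, since the scale property is stated for real parameters only. I would first use the Reflection Theorem inside $L(\mathbb{R})$ to reflect the totality of $R$ to a level $L_\delta(\mathbb{R})$ that correctly computes the defining $\Sigma_1$ formula, and then invoke the Moschovakis Coding Lemma to replace the search below $\Theta$ for a witness (together with a certificate of its correctness) by a quantifier over reals coding members of $L_\delta(\mathbb{R})$. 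The Coding Lemma is exactly the $\AD$-tool that lets a set-sized search below $\Theta$ be witnessed by reals, converting the reflected relation into one that is $\Sigma_1$-presentable in a mere real parameter, to which the previous paragraph applies.

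The main obstacle is precisely this third step: passing from an $R$ that may first appear only at a high level $L_\gamma(\mathbb{R})$ with a genuine ordinal parameter to a relation that is $\Sigma_1$ over all of $L(\mathbb{R})$ in a real parameter. The delicate points are (i) choosing the reflecting level $L_\delta(\mathbb{R})$ so that it both preserves the totality of $R$ on \emph{all} reals and is faithfully coded by reals through the Coding Lemma, and (ii) verifying that the uniformizing function produced by the scale property witnesses the original $R$ rather than merely the reflected, coded approximation. Once this bridge is in place, everything else---Mycielski's $\CCR$ and the scale/uniformization input---is comparatively routine, and the iteration of $f$ delivers the required $R$-chain.
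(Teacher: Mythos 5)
The paper gives no proof of this statement at all---it is cited verbatim from Kechris's 1984 paper---so I am measuring your sketch against the known argument rather than against anything in this manuscript. Your overall skeleton is the right one and genuinely close to how this theorem is proved: eliminate the ordinal parameter so that the offending relation becomes $\Sigma_1$ over $L(\mathbb{R})$ in a real parameter, then produce a \emph{definable} selection of witnesses and iterate it by an ordinary $\ZF$ recursion. Two remarks on the reduction step: it is easier than you make it sound, and the Coding Lemma is not the right tool there. If $\DCR$ fails, take the \emph{least} ordinal $\delta$ such that some relation definable over $L_\delta(\mathbb{R})$ from a real $z$ (with least G\"odel number) is total and has no chain; totality and chainlessness quantify only over reals (a sequence $\omega\to\mathbb{R}$ is coded by a real), so the minimality of $\delta$ makes this minimal counterexample $R^*$ outright $\Sigma_1$ over $L(\mathbb{R})$ from $z$ alone, and your worry (ii) about the reflected relation differing from the original one evaporates, since one argues by contradiction against $R^*$ itself.

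The genuine gap is your load-bearing citation: the scale property for $\Sigma_1^{L(\mathbb{R})}$ (Martin--Steel). That theorem, as standardly proved, is established in $\ZF+\AD+\mathrm{DC}$---the blanket Cabal hypothesis---and the usual justification for dropping $\mathrm{DC}$ from its hypotheses is precisely Kechris's theorem, i.e.\ the statement you are proving. As written your argument is therefore circular: you nowhere verify that the transfinite construction of scales through the levels of $L(\mathbb{R})$ (second periodicity, the gap analysis, and the rest) can be carried out using only $\CCR$, which is all that Mycielski's theorem licenses, and that verification is a substantial technical undertaking, not a footnote. A non-circular repair, in the spirit of Kechris's own route, is to replace uniformization by a selection principle whose proof is demonstrably choice-free: the Solovay Basis Theorem---provable in $\ZF+\AD+V=L(\mathbb{R})$ via the Coding Lemma and $\Sigma_1$-reflection---which says that every nonempty $\Sigma^2_1(w)$ set of reals contains a $\Delta^2_1(w)$ member. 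Since the $\Delta^2_1(w)$ reals admit a canonical partial enumeration by codes, ``the $\Delta^2_1(z,x_n)$ element of $\{y\mid R^*(x_n,y)\}$ with least code'' is a legitimate definition, and the chain $x_0,x_1,\dots$ is then produced by a choice-free recursion, contradicting chainlessness of $R^*$. So: right architecture, but you must either swap in this basis-theorem argument or supply a $\mathrm{DC}$-free reworking of the scales machinery; citing Martin--Steel off the shelf does not close the proof.
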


It is unknown whether the assumption $V=L(\mathbb{R})$ can be removed. Recently, the following ``unconditional" result is proved.

\begin{theorem}[Peng and Yu \cite{PY21}]\label{theorem: td ccr}
Over $\ZF$,   $\TD$ implies $\CCR$.
\end{theorem}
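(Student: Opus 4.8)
The plan is to reduce $\CCR$ to a single application of $\TD$ together with a definable, choice-free selection. Given a sequence $\langle A_n\rangle_{n\in\omega}$ of nonempty sets of reals, I would set, for each $n$, $E_n=\{\mathbf{x}\in\D : \exists a\in A_n\,(a\leq_T x)\}$, and let $E=\bigcap_n E_n=\{\mathbf{x} : \forall n\,\exists a\in A_n\,(a\leq_T x)\}$. Each $E_n$ is upward closed, and for each \emph{fixed} $n$ it contains an upper cone: from the single hypothesis $A_n\neq\emptyset$ one instantiates a witness $a\in A_n$ and observes that the cone above $\mathbf{a}$ lies in $E_n$ (this uses one witness for one $n$, so no choice is invoked). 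Hence $E$ is upward closed, and by $\TD$ either $E$ or $\D\setminus E$ contains an upper cone.

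The favorable horn is the one to exploit, and the key point is that a single oracle makes all the choices at once. Suppose $E$ contains the cone above $\mathbf{d}$. Then $\mathbf{d}\in E$, so for every $n$ there is a real in $A_n$ computable from $d$. I would then define a choice function with no appeal to $\AC$ by a least-index search: set $f(n)=\Phi_{e_n}^{d}$, where $e_n$ is the least $e$ such that $\Phi_e^{d}$ is total and $\Phi_e^{d}\in A_n$. For each $n$ the set of admissible $e$ is a nonempty subset of $\omega$, hence has a least element, so $f(n)$ is well defined and $f(n)\in A_n$. Thus $f\colon\omega\to\mathbb{R}$ is the desired choice function, and this case is settled cleanly.

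It remains to rule out the other horn, and this is where I expect the main obstacle to lie. If $\D\setminus E$ contained a cone above $\mathbf{d}$, then, writing $\D\setminus E=\bigcup_n(\D\setminus E_n)$, that cone would be covered by countably many sets $\D\setminus E_n$, each of which is downward closed and disjoint from some cone (the cone above a witness for $A_n$). So the bad horn asserts precisely that an upper cone can be covered by countably many cone-small sets, i.e., a failure of countable additivity for Martin's measure. The naive refutation --- choosing for each $n$ a cone avoiding $\D\setminus E_n$ and joining these cones, or building an increasing Turing chain meeting every $E_n$ and passing to the join --- reintroduces countably many choices and is therefore circular, since countable completeness of Martin's measure is essentially equivalent to the very principle $\CCR$ under proof.

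The way around the circularity, and the \emph{crux} of the argument, is to avoid selecting witnesses altogether and instead extract the uniformity from a second, carefully designed use of $\TD$. Granting the bad horn, the function $h(\mathbf{x})=\mu n\,[\,\mathbf{x}\notin E_n\,]$ is total and definable on the cone above $\mathbf{d}$, and monotone nondecreasing along $\leq_T$ since each $E_n$ is upward closed. For each fixed $k$ the set $\{\mathbf{x}:h(\mathbf{x})\geq k\}=\bigcap_{n<k}E_n$ contains a cone, with a witness produced by a finite join, so still choice-free. I would then apply $\TD$ to the level sets of $h$: for each $k$ either $\{\,h=k\,\}$ contains a cone or (its complement does, hence) $\{\,h=k\,\}$ is disjoint from a cone, and a finite-join argument pins $h$ down on a cone and contradicts the fact that $\{h\geq k+1\}$ also contains a cone. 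Making this final step genuinely choice-free --- turning the per-$k$ finite-join witnesses into a single degree witnessing $h=\infty$, equivalently producing one real in $E$ --- is the heart of the matter; I expect it to require coding the entire approximation uniformly into one real, so that a single invocation of $\TD$ rather than a sequence of invocations does all the work, and this uniform coding is the step I would spend the most effort on.
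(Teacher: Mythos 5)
First, a point of orientation: this paper does not prove Theorem \ref{theorem: td ccr} at all --- it quotes it from Peng--Yu \cite{PY21} --- so your attempt has to be judged as a proof of that cited theorem on its own terms. Your setup and favorable horn are correct and match the natural first moves: each $E_n$ is upward closed and contains a cone on the strength of a single witness, $E=\bigcap_n E_n$ is upward closed and degree-invariant, and if $E$ contains a cone above $\mathbf{d}$ then the least-index definition $f(n)=\Phi_{e_n}^{d}$ is a genuinely choice-free choice function. In fact the dichotomy is starker than you state: $\D\setminus E$ is downward closed, so if it contained a cone above $\mathbf{d}$ it would be all of $\D$ (any $\mathbf{x}$ lies below $\mathbf{x}\vee\mathbf{d}$, which is in the cone), i.e.\ the bad horn says exactly $E=\emptyset$: no single real computes a member of every $A_n$.

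Refuting that horn is the entire content of the theorem, and your proposal does not do it; worse, the level-set maneuver collapses into precisely the circularity you yourself diagnose for the naive approach. Under the hypothesis $E=\emptyset$, your function $h$ is total, and for each $k$ TD yields that $\{h=k\}$ avoids some cone (it cannot contain one, being disjoint from $\{h\geq k+1\}$, which does). But this is one existential statement per $k$; turning the countably many cone-bases into a single degree on which $h$ exceeds every $k$ --- that is, a point of $E$ --- is again an instance of countable completeness of the cone filter, which for upward-closed sets is exactly the principle being proved. Nothing contradictory has happened at any finite stage: the configuration ``each $\{h=k\}$ avoids a cone, yet the $\{h=k\}$ cover a cone'' is consistent until countable completeness is invoked, so no per-$k$ application of TD, nor any finite-join bookkeeping, can close the argument. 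Your final sentence concedes that the required ``uniform coding'' is missing; that missing step is not a technicality but the theorem itself, and there is a structural reason it is hard: the $A_n$ are arbitrary, so there is no definable passage from a real $x$ to a witness in $A_{h(\mathbf{x})}$ (for instance, applying TD to $\{\mathbf{x}: h(\mathbf{x}')>h(\mathbf{x})\}$ kills one horn by iterating the jump $\omega$ times, but the horn where $h(\mathbf{x}')=h(\mathbf{x})$ on a cone survives, since joins, not jumps, are what increase $h$). The actual Peng--Yu proof gets around this with genuinely new degree-theoretic machinery --- in this paper's notation, Lemma \ref{lemma: doubjump}, providing for each $\mathbf{x}$ a continuum-sized family $\{\mathbf{y}_r\}$ of degrees that pairwise form minimal pairs over $\mathbf{x}$ and realize prescribed double jumps --- together with a substantially more involved argument built on it; nothing in your outline substitutes for that.
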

We will use $\CCR$ throughout the paper even without mentioning it. 

\smallskip

The first result in this paper is  a partial solution to Question \ref{querstion: ad vs dc}. We  prove that $\ZF+\TD$ implies $\wDC$,  a weaker version of $\DCR$ (for the definition of $\wDC$, see Definition \ref{definition: wdc}).

The second result in this paper is about the regularity properties of sets of reals. Though $\TD$ seems unlikely as strong as $\AD$, a natural question is whether $\TD$ is as ``useful" as $\AD$.  Sami initiated  this project by proving (in \cite{Sami89}) that $\ZF+\TD$ implies $\mathrm{CH}$, the continuum hypothesis. But it seems a rather difficult (and long standing) question whether $\ZF+\TD(+\mathrm{DC})$ implies regularity properties for sets of reals.  In this paper, we give a partial answer to this question by showing that strongly Turing determinacy, $\sTD$--a stronger version of $\TD$,    implies the regularity properties for sets of reals.

A basis for a class $\mathscr{C}$ of linearly ordered sets is a collection $\mathscr{B}\subseteq \mathscr{C}$ such that for each $L_1\in \mathscr{C}$, there is an $L_2\in \mathscr{B}$ such that $L_2$ is isomorphic to a subset of $L_1$. Investigating basis for linear ordering   is a very active area in  set theory today. For example, Moore \cite{Moore06} proves that   under proper forcing axioms,  $\mathrm{PFA}$, a five-element basis exists. But it seems that   basis theorems for linear orderings under $\AD$  remains untouched. In this paper, we prove a basis theorem for linear orderings over $\mathbb{R}$ under the assumption $\ZF+\TD+\DCR+\mbox{``every uncountable set of reals has a perfect subset"}$ by showing that  for any linear oder $\leq_L$ over $\mathbb{R}$, there is an order preserving embedding from $(2^{\omega},\leq)$ to $(\mathbb{R},\leq_L)$. In other words, $\{(2^{\omega},\leq)\}$ is a basis for $\{(\mathbb{R},\leq_L)\mid \leq_L \mbox{ is a linear ordering over } \mathbb{R}\}.$

The last result in this paper is an application of recursion theory to fractal geometry theory. Besicovitch and Davis prove that for any analytic set, its Hausdorff dimension can be approximated arbitrarily closed by the Hausdorff dimension of its closed subsets. Joyce and Preiss \cite{JP95} prove a similar result for  packing dimension. Recently Slaman proves  that both Besicovitch-Davis and Joyce-Preiss theorem fail for some $\Pi^1_1$-set under the assumption $V=L$. However, we prove that both the theorems hold for arbitrary sets of reals over $\ZF+\sTD$. So the phenomenon  can be viewed as a new regularity property for the sets of reals. After we proved the result, Hirschfeldt and Slaman told us that recently Crone, Fishman and Jackson proved  the following result under stronger assumption. 
\begin{theorem}[Crone,  Fishman and Jackson \cite{crone2020hausdorff}]\label{theorem: cfj theorem}
Assume $\ZF+\AD+\DCR$. For any  set $A$ and $\epsilon>0$, there is a closed set $F\subseteq A$ so that $\DH(F)\geq \DH(A)- \epsilon$.
\end{theorem}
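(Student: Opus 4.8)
The plan is to reduce the classical statement to one about algorithmic (effective) Hausdorff dimension and then manufacture a high-dimensional closed subset using determinacy. Write $d=\DH(A)$ and fix a rational $s$ with $d-\epsilon<s<d$ (if $d=0$ there is nothing to prove). For oracles $z,x\in\cs$ let $\dim^z(x)=\liminf_n K^z(x\uh n)/n$ denote the effective Hausdorff dimension of $x$ relative to $z$, where $K^z$ is prefix-free Kolmogorov complexity with oracle $z$. The first step is the point-to-set principle in the form I need: for every oracle $z$, $\sup_{x\in A}\dim^z(x)\ge\DH(A)$, with equality for some minimizing oracle $z_0$. The inequality is the ``covering'' direction---low effective dimension relative to $z$ produces a $z$-computable sequence of cylinder covers witnessing small Hausdorff dimension---and is provable in $\ZF+\DCR$, so nothing beyond $\DCR$ (and $\CCR$, which holds by Theorem \ref{theorem: td ccr}) is required; the minimizing $z_0$ is then extracted by a further $\DCR$ argument. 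Fixing such a $z_0$, the $z_0$-effective dimensions of points of $A$ have supremum exactly $d$.

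The second step records how a pointed perfect set yields a genuine Hausdorff-dimension lower bound. Suppose $P=[T]\subseteq A$ is pointed with a level-homogeneous tree $T\leq_T z_0$, and suppose $\dim^{z_0}(x)\ge s$ for every $x\in P$. For $\sigma\in T$ with $|\sigma|=n$, each branch through $\sigma$ satisfies $K^{z_0}(x\uh n)\ge(s-o(1))n$, while $x\uh n$ is specified, given $z_0$, by its index among the level-$n$ nodes of $T$; hence the number $N_n$ of such nodes obeys $N_n\ge 2^{(s-o(1))n}$. The fair measure $\mu$ on $[T]$ then satisfies $\mu([\sigma])\le 2^{-(s-o(1))n}=\mathrm{diam}([\sigma])^{\,s-o(1)}$, so the mass distribution (Frostman) principle gives $\DH(P)\ge s$. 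Thus it suffices to produce a pointed perfect $P\subseteq A$, with tree computable from $z_0$, on which $\dim^{z_0}$ is uniformly at least $s$.

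The third step produces such a $P$. Relativizing everything to $z_0$ and using that $\AD$ implies $\TD$ (Martin)---and, via Davis-style unfolded games, yields pointed perfect subsets of Turing-cofinal sets---I would apply this strong-Turing-determinacy conclusion to $B_s=\{x\in A:\dim^{z_0}(x)\ge s\}$: if $B_s$ is Turing-cofinal above $z_0$, i.e.\ $\forall w\,\exists x\geq_T w\,(x\in B_s)$, then one obtains a pointed perfect $P\subseteq B_s\subseteq A$ whose ($z_0$-computable) tree may be taken level-homogeneous, and the second step finishes the proof. The game itself has the builder construct a branch while auxiliary moves force the outcome both to compute the opponent's data (pointedness/cofinality) and to lie in $B_s$; a winning strategy for the builder is precisely the desired pointed perfect subset, and a winning strategy for the opponent would refute the cofinality of $B_s$.

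The hard part is the cofinality of $B_s$, which is \emph{not} automatic: applying the point-to-set principle with an oracle $w\geq_T z_0$ yields $x\in A$ with $\dim^{z_0}(x)\ge\dim^w(x)>s$, but nothing forces $x\geq_T w$. The crux is to exploit the \emph{minimality} of $z_0$. If $B_s$ failed to be cofinal---so that some fixed $r\geq_T z_0$ bounds the high-dimensional points in the sense that every $x\in A$ with $x\geq_T r$ has $\dim^{z_0}(x)<s$---then the covers witnessing these dimensional defects, assembled choicelessly over the relevant Turing degrees via $\DCR$, could be absorbed into an improved oracle $z_1$ with $\sup_{x\in A}\dim^{z_1}(x)<d$, contradicting $\DH(A)=d$. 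Turning this heuristic into an honest oracle improvement---uniformizing a cofinal family of low-dimension witnesses into a single real without choice---is the main technical obstacle, and it is exactly here that $\AD+\DCR$ does the essential work. Once cofinality is secured, the third and second steps deliver a closed $F=P\subseteq A$ with $\DH(F)\ge s>\DH(A)-\epsilon$.
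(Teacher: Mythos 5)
Your steps (1) and (2) are essentially sound: a minimizing oracle $z_0$ can be obtained by joining a sequence of near-optimal oracles (this uses $\CCR$/$\DCR$), and the claim that a pointed tree $T\leq_T z_0$ all of whose branches satisfy $\dim^{z_0}(x)\geq s$ (writing $\dim^{z}(x)$ for $\liminf_{n\to\infty}K^{z}(x\uh n)/n$) spans a closed set of Hausdorff dimension at least $s$ is a known correspondence principle for closed sets; your level-homogeneity device is shaky but repairable. The fatal problem is steps (3)--(4), and you have located it correctly but misjudged its nature: the cofinality of $B_s=\{x\in A:\dim^{z_0}(x)\geq s\}$ is not a technical obstacle to be overcome by exploiting minimality of $z_0$ --- it is simply \emph{false} in general, and no oracle-improvement argument can rescue it. Concretely, let $P$ be a $\Pi^0_1$ class of Martin-L\"of random reals of measure at least $1/2$. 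The set of $x$ computing $\emptyset'$ is arithmetical (hence Borel) and null, so by inner regularity there is a closed $F\subseteq P$ of positive measure no member of which computes $\emptyset'$. Take $A=F$. Then $\DH(A)=1$, so by the easy direction of Theorem \ref{theorem: ll theorem} \emph{every} oracle $z_1$ satisfies $\sup_{x\in A}\dim^{z_1}(x)\geq 1$; in particular no ``improved oracle'' with supremum below $1$ can exist. Yet for any $s>0$ and any $z_0$, every element of $B_s\subseteq A$ fails to compute $\emptyset'$, so $B_s$ does not range cofinally in the Turing degrees and contains no pointed subset (a pointed set always contains members of every degree above that of its tree, hence members computing $\emptyset'$). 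So the implication underlying your step (4) --- ``$B_s$ not cofinal implies the oracle can be improved'' --- is refuted, and the strategy of finding the pointed perfect set \emph{inside} $A$ cannot be completed.

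This is exactly the difficulty the paper's own argument (given for the $\sTD$ analogue, Theorem \ref{theorem: cfj theorem under zfstd}) is engineered to avoid: the pointed set is never placed inside $A$. Instead, the lowness theorem of Herbert and Lempp--Miller--Ng--Turetsky--Weber (Theorem \ref{theorem: low for hd}), relativized to $z$, produces for every $z$ a real $y>_T z$ that is low for effective Hausdorff dimension relative to $z$ and whose jump computes a high-dimension witness $x\in A$. Hence the set $B_{e_0}=\{y \mid \Phi_{e_0}^{y'}\in A \wedge \liminf_{n\to\infty}K^{y}(\Phi_{e_0}^{y'}\uh n)/n > \DH(A)-\epsilon/2\}$ \emph{is} Turing-cofinal, even though its members need not lie in $A$ and the witnesses $\Phi_{e_0}^{y'}$ need not be Turing above $y$. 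A pointed $P\subseteq B_{e_0}$ then yields the \emph{analytic} set $C=\{\Phi_{e_0}^{y'} \mid y\in P\}\subseteq A$ with $\DH(C)\geq \DH(A)-\epsilon/2$ (for any $z$, pick $y\in P$ with $y\geq_T z$ and transfer the complexity bound from $K^y$ to $K^z$), and the desired closed set comes from the classical Besicovitch--Davis theorem (Theorem \ref{theorem: bm theorem}) applied to $C$. In short, the jump functional plus the classical theorem for analytic sets do precisely the work that your missing cofinality claim was supposed to do; without that detour, the approach collapses on examples as simple as the closed set $F$ above.
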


Their proof is direct and uses some rather deep results from set theory. However, we believe our proof is much simpler and more elementary.

\subsection{Point to set principle}

{\em Relativization} opens a door between recursion theory and other mathematical branches. In recursion theory, for a real $x$, a relativization to $x$, roughly speaking, is a way to add prefix $x$- to every appearance  of any notion in the statement.  Then  if a notion is defined in recursion theory,   its relativization is defined {\em naturally}. And if a theorem in recursion theory is proved, then its relativization also follows {\em naturally}. For example, every continuous function is a recursive function relative to a real; and a Borel set is a hyperarithmeitc set relative to a real. From this point of view, one may apply  recursion theory results to analysis.

``Point to set" principle is a more concrete way, by using relativization,  to apply recursion theory to other areas of mathematics. Generally speaking,  the principle says that a set $A$ having certain property is equivalent to that it contains some special points. Such argument can be dated back to Sacks, who (in \cite{Sacks69}) gave a recursion theoretical proof of the classical result that every analytic set is measurable. For one more example, given a relativizable algorithmic randomness notion $\Gamma$ (such as Martin-L\" of-, Schnorr-,   etc), we have the following fact.
\begin{fact}\label{fact: schnorr random }
Assume $\ZF+\CCR$. A set $A\subseteq \mathbb{R}$ is   null if and only if there is some real $x$ so that there is no $\Gamma(x)$-random real in $A$.
\end{fact}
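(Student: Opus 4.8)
The plan is to derive both implications from two standard, relativization-invariant features shared by the notions $\Gamma$ under consideration (Martin-L\"{o}f, Schnorr, computable randomness, and the like): (i) for every real $x$, the set of reals that are \emph{not} $\Gamma(x)$-random is Lebesgue null; and (ii) a relativized $\Gamma$-test is witnessed by a uniformly $\SI{1}(x)$ sequence of open sets $(U_n)_{n\in\omega}$ with $\lambda(U_n)\le 2^{-n}$, in such a way that any real lying in $\bigcap_n U_n$ already fails to be $\Gamma(x)$-random. Property (i) drives the easy direction, while property (ii), together with a simultaneous choice of a sequence of covers, is exactly where $\CCR$ enters.

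For the backward direction, suppose some real $x$ is such that no $\Gamma(x)$-random real lies in $A$. Then $A$ is contained in the set of non-$\Gamma(x)$-random reals, which is null by (i); hence $A$ is null as a subset of a null set. This direction uses only property (i): for Martin-L\"{o}f randomness the relevant universal relativized test is obtained effectively from $x$, and the nullity of its intersection needs no appeal to choice beyond what is already available.

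For the forward direction, suppose $A$ is null, i.e.\ its outer Lebesgue measure is $0$. For each $n$, let $B_n$ be the set of reals coding an open set $U\supseteq A$ with $\lambda(U)\le 2^{-n}$; each $B_n$ is nonempty precisely because $A$ has outer measure $0$. Applying $\CCR$ to $\langle B_n\rangle_{n\in\omega}$ yields a function $n\mapsto U_n$ with a code for $U_n$ in $B_n$, and I let $x$ be the join of these codes. Relative to $x$, the sequence $(U_n)_{n\in\omega}$ is uniformly $\SI{1}(x)$ with $\lambda(U_n)\le 2^{-n}$, hence a Martin-L\"{o}f test relative to $x$. Since $A\subseteq\bigcap_n U_n$, every point of $A$ fails this test and so is not $\Gamma(x)$-random by (ii); thus no $\Gamma(x)$-random real lies in $A$.

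The main obstacle, and the only genuine use of choice, is the simultaneous selection $\langle U_n\rangle_{n\in\omega}$ of covers of shrinking measure: each $B_n$ is nonempty in $\ZF$, but extracting a single sequence is exactly an instance of $\CCR$. The remaining subtlety is the passage from an abstract relativizable notion to a concrete test. For Martin-L\"{o}f randomness the argument is immediate; for finer notions such as Schnorr or computable randomness one must additionally arrange that the test components have $x$-computable measure, which is done by first replacing each $U_n$ by an open superset of dyadic-rational measure $\le 2^{-n}$ and coding into $x$ a modulus of convergence for each $\lambda(U_n)$. Accordingly, I would isolate (i) and (ii) as the precise hypotheses on $\Gamma$, so that the proof applies uniformly to all of the intended notions.
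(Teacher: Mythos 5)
Your proof is correct. The paper states this Fact without any proof (it is treated as folklore in the point-to-set tradition going back to Sacks), and your argument is exactly the standard one: the backward direction follows from the nullity of the set of non-$\Gamma(x)$-random reals, and the forward direction uses $\CCR$ precisely to select a sequence of open covers of $A$ of measure at most $2^{-n}$ and codes it into a single oracle $x$. Your additional care in inflating each cover to have exact dyadic measure (so that the sequence is a Schnorr test relative to $x$, not merely a Martin-L\"of test) is exactly the adjustment needed for the Schnorr case, which is the notion the paper actually uses in its later applications, so isolating properties (i) and (ii) as hypotheses on $\Gamma$ matches the paper's deliberately loose ``relativizable randomness notion'' formulation.
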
 
So if we want to prove that $A$ is not null, then it suffices to prove that for any real $x$, there is a $\Gamma(x)$-random real in $A$. One may also replace randomness with genericity and obtain the the corresponded results. In this paper, we apply some quite recent results in recursion theory and algorithmic randomness theory to descriptive set theory and fractal geometry theory. Especially some deep results concerning the lowness properties for various recursion theory notations turned to be crucial to our proof. The so-called ``lowness properties" is a kind of property  preserving some algorithmic property. For example, a real $x$ is low for Turing jump (or just low) if $x'\equiv_T \emptyset'$; and a real $x$ is called low for Schnorr random (for the definition of Schnorr randomness, see the paragraphs below Theorem \ref{theorem: consequence of TD}) if every Schnorr random real is Schnorr random relative to $x$, etc. Ironically, different than the ``slowdown" properties of themselves,  these notions will be used to prove some ``speedup"  results. We expect to see more such applications in the near future. 

\bigskip

We organize the paper as follows. In Section \ref{section: t and n}, we give some terminologies and notions. In section \ref{section:sami}, we sketch a recursion theoretical reformulation of the Sami's proof that $\ZF+\TD$ implies $\mathrm{CH}$.  The result will be used in Section \ref{section: regular}.  In section \ref{section: wdc}, we prove $\wDC$ within $\ZF+\TD$. In section \ref{section: regular}, we prove that $\ZF+\sTD$  implies regular properties for sets of reals. In the same section, we also prove a basis theorem for  linear orderings over sets of reals within  $\ZF+\TD+\DCR+\mbox{``every uncountable set of reals has a perfect subset"}$. In section \ref{section: capacility}, we prove that Besicovitch-Davis theorem holds for any set of reals within $\ZF+\sTD$.

\section{Terminologies and notions}\label{section: t and n}
 
 We assume that readers have some knowledge of   descriptive set theory and recursion theory. The major references are  \cite{Sacks90}, \cite{CY15book}, \cite{Niesbook09}, \cite{DH10}, \cite{Jech03} and \cite{Ler83}.
\subsection{Set theory}
 
 We assume that readers have some knowledge of axiomatic set theory. $\ZF$ is Zermelo-Fraenkel axiom system. $\AD$ is the axiom of determinacy.
 
 When we say that $T\subseteq 2^{<\omega}$ is a tree, we mean that $T$ is a tree without dead nodes. $[T]$ is the collection of infinite paths trough $T$. Given any $x\in \omega^{\omega}$ and natural number $n$, we use $x\uh n$ to denote an initial segment of $x$ with length $n$.  In other words, $x\uh n$ is a finite string $\sigma\in \omega^{<\omega}$ of length $n$ so that for any $i<n$, $\sigma(i)=x(i)$. 
 
\subsection{Recursion theory}

We use $\leq_T$ to denote Turing reduction and $\leq_h$ to denote hyperarithmetic reduction.  We use $\Phi^x$ denote a Turing machine with oracle $x$. Sometimes we also say that $\Phi^x$ is a recursive functional. We fix an effective enumeration $\{\Phi_e^x\}_{e\in \omega}$ of recursive functionals.

$\KO$ is Kleene's $\KO$. $\CK$ is the least non-recursive ordinal and $\omega_1^x$ is the least ordinal not recursive in $x$.

We say a set $A$ ranges Turing degrees cofinally if for any real $x$, there is some $y\geq_T x$ in $A$. We use $x'$ to denote the Turing jump relative to $x$. More generally, if $\alpha<\omega_1^x$, then $x^{(\alpha)}$ is that $\alpha$-th Turing jump of $x$.

The following fact is folklore and a skeched proof can be found in \cite{PY21}
\begin{lemma}\label{lemma: doubjump}
Assume $\ZF$. For any Turing degree $\x$, there are a family Turing degrees $\{\y_r\mid r\in \mathbb{R}\}$ satisfying the following property:
\begin{itemize}
\item[(1)] For any $r\in \R$, $\x<\y_r$; 
 \item[(2)] For any $r_0\neq r_1\in \R$ and $\z<\y_{r_0},\y_{r_1}$, we have that $\z\leq \x$;
\item[(3)] For any $\z\geq \x''$, the Turing double jump of $\x$, there is an infinite set $C_{\z}\subset \R$ so that $\y_r''=\z$ for any $r\in C_{\z}$.
\end{itemize}
\end{lemma}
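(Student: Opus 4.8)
The plan is to reduce the statement to an effective construction carried out relative to a fixed representative $x$ of $\x$. All the notions involved (Turing reducibility, the jump, forcing over an oracle) relativize, and, crucially, in $\ZF$ the family $\{\y_r\}$ must be produced as a single explicitly defined set, so no representatives of degrees may ever be chosen by hand: the whole point of fixing $x$ once and for all and building everything $x$-effectively is to make the assignment $r\mapsto y_r$ a genuine function and hence a legitimate set. With $x$ fixed, I would take each member in the form $y_r=x\oplus g_r$ with $\y_r$ its degree, where $g_r\in 2^\omega$ is an infinite branch of a single perfect tree built by a forcing/requirement construction. Property (1), namely $\x<\y_r$, then splits into $x\leq_T y_r$, which is automatic, and $y_r\not\leq_T x$, which is a single diagonalization requirement $g_r\neq\Phi_e^{x}$ imposed on every branch.

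Property (2) is precisely the assertion that the family is pairwise a minimal pair over $\x$, and I would secure it by Sacks' minimal-pair technique carried out on the tree. At each splitting node I meet a requirement indexed by a pair of functionals $(\Phi_e,\Phi_i)$, arranging that whenever $\Phi_e^{x\oplus g_{r_0}}=\Phi_i^{x\oplus g_{r_1}}$ is total for two branches $g_{r_0}\neq g_{r_1}$ passing through the two immediate extensions of the node, the common function is already computable from $x$; this is done in the usual way by either forcing a disagreement or pinning the common value to an $x$-computable function. Equivalently, it suffices to make distinct branches pairwise sufficiently mutually generic over $x$, since mutual genericity forces every common predecessor of $\y_{r_0}$ and $\y_{r_1}$ down to $\x$.

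Property (3) is where the real work lies, and I would obtain it by interleaving a coding strategy with the splitting construction above. The engine is the relativized, iterated Friedberg jump-inversion theorem: for any real $w\geq_T x''$ there is $y\geq_T x$ with $y''\equiv_T w$ and $y\leq_T w$, obtained by first inverting the jump over $x'$ (legitimate since $w\geq_T x''=(x')'$) to get $v\geq_T x'$ with $v'\equiv_T w$ and $v\leq_T w$, and then inverting over $x$ to get $y\geq_T x$ with $y'\equiv_T v$ and $y\leq_T v$, so that $y''\equiv_T v'\equiv_T w$. On a designated branch I would code a prescribed target $w_r\geq_T x''$ into the double jump of $y_r$ by this procedure. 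The indexing is governed by a fixed, explicitly $x$-defined map $r\mapsto w_r$ with $w_r\geq_T x''$ whose induced map $r\mapsto \deg_T(w_r)$ is onto $\{\z:\z\geq\x''\}$ with every fibre infinite (for instance by letting $w_r$ depend only on a suitable tail of $r$); since every degree $\geq\x''$ equals the degree of each of its representatives $w\geq_T x''$, this realizes every such degree as $\y_r''$ along infinitely many branches, which are exactly the desired sets $C_{\z}$.

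The main obstacle is the tension between (3) and (2). The coding makes each $y_r$ reconstruct a large real $w_r$ in its double jump, so I must guarantee that this coded information never produces a common lower bound strictly above $\x$ for two distinct branches, including two branches coding the \emph{same} target $w$. The resolution is to run the jump-inversion coding \emph{inside} the splitting construction, keeping distinct branches mutually generic over $x$: the coded data lives only in the jumps, and, being deposited along mutually generic branches, is invisible to any reduction witnessing a common predecessor (note that $w_r$ is an upper bound of $\y_r$, not a lower bound, so no conflict arises from having $y_r\leq_T w_r$). The delicate part is verifying simultaneously that (i) the splitting requirements keep every pair a minimal pair over $\x$, (ii) the coding delivers $y_r''\equiv_T w_r$ in both directions, the upper bound $y_r''\leq_T w_r$ forcing the construction on that branch to be $w_r$-effective, and (iii) the entire assignment $r\mapsto y_r$ is one set definable from $x$. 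Each ingredient is standard in isolation, so I expect the remaining difficulty to be purely the bookkeeping needed to dovetail the splitting and coding requirements along a single tree.
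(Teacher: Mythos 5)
Your proposal is correct and follows essentially the same route as the folklore proof the paper cites from \cite{PY21}: fix a representative $x$ and build a single perfect tree, recursive in $x''$, whose distinct branches are pairwise sufficiently mutually generic over $x$ (giving (1) and the minimal-pair property (2)), with jump facts decided along the tree so that $(x\oplus f_r)''\equiv_T x''\oplus r$ for every branch, which yields (3) with infinite fibres. Your ``prescribed target'' framing via iterated Friedberg jump inversion collapses to exactly this construction once one takes $w_r=x''\oplus r$, so the two arguments are the same in substance.
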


\section{On Sami's theorem}\label{section:sami}

\begin{theorem}[Sami \cite{Sami89}]\label{theorem: sami theorem}
$\ZF+\TD+\mathrm{DC}$ proves $\mathrm{CH}$.
\end{theorem}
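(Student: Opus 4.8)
The plan is to read $\mathrm{CH}$ in the choiceless, determinacy-friendly form ``every set of reals is countable or has size $2^{\aleph_0}$'', and to prove its nontrivial half: if $A\subseteq\R$ is uncountable then there is an injection $\R\hookrightarrow A$, whence $|A|=2^{\aleph_0}$ by Cantor--Schr\"oder--Bernstein (which is available in $\ZF$). The first move is to pass from reals to degrees. Let $\D_A=\{\ba : A\cap\ba\neq\emptyset\}$ be the set of degrees meeting $A$. Since each degree is a countable set of reals and $A=\bigcup_{\ba\in\D_A}(A\cap\ba)$, $\CCR$ (Theorem \ref{theorem: td ccr}) forbids $\D_A$ from being countable, so $\D_A$ is an uncountable set of degrees. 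I would also record a \emph{definable selection} device: given a real $g$ with $A\cap\deg(g)\neq\emptyset$, one can enumerate $\deg(g)$ uniformly from $g$ (run over pairs $(e,i)$ and keep those $z=\Phi^g_e$ with $\Phi^z_i=g$) and take the first $z$ in this enumeration lying in $A$; this makes the choice of a representative in $A$ a $\ZF$-definable function of $g$. Consequently it suffices to produce a definable injection of $\R$ into $\D_A$ that supplies, for each value, a canonical generating real.

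Now apply $\TD$ to $\D_A$. In the easy case $\D_A$ contains an upper cone, say above $\bd$. Applying Lemma \ref{lemma: doubjump} with $\x=\bd$ yields a family $\{\y_r\}_{r\in\R}$ of pairwise distinct degrees above $\bd$ (clauses (1)--(2)), uniformly generated by reals $y_r$ produced from $r$. All $\y_r$ lie in the cone, hence in $\D_A$, so $r\mapsto\y_r$ injects $\R$ into $\D_A$; composing with the selection device gives a definable injection $\R\hookrightarrow A$, and we are done.

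The remaining case---$\D\setminus\D_A$ contains a cone above some $\bd$, so every $\ba\in\D_A$ satisfies $\ba\not\geq\bd$---is the heart of the matter, and I expect it to be the main obstacle. Here one cannot simply embed a cone, and crucially one must \emph{not} build a perfect subset, since the perfect set property is exactly the stronger conclusion reserved for $\sTD$. The idea is to trade ``height'' for the available ``width'' by iterating the jump. The basic observation is that the degrees below any fixed degree form a countable set, so the double-jump map $\ba\mapsto\ba''$ has countable fibres on $\D_A$ (all $\ba$ with a common double jump lie below it); hence $\D_A^{(1)}=\{\ba'' : \ba\in\D_A\}$ is again uncountable and of the same cardinality as $\D_A$, and from a canonical generator $b$ of a value $\ba''$ one can definably recover a generator of some $\ba\in\D_A$ in its (countable, $\leq_T b$-enumerable) fibre. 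Iterating, one obtains uncountable sets $\D_A^{(n)}=\{\ba^{(2n)} : \ba\in\D_A\}$ to which $\TD$ is applied at each stage; as soon as some $\D_A^{(n)}$ contains a cone, the easy case together with the definable fibre-pullback transports the resulting injection all the way back to an injection $\R\hookrightarrow A$.

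What must be shown is that this iteration cannot avoid a cone forever. A finite iteration is not enough: if every stage avoids a cone one only concludes that $\D_A$ omits the cone above $\bigoplus_n\bd_n$, which is no contradiction. The plan is therefore to push the iteration transfinitely through the jump hierarchy---passing to $\ba^{(\alpha)}$ for $\alpha<\omega_1^x$ relative to a real $x$ coding $A$ and the parameters gathered by $\mathrm{DC}$---and to close it off by a $\TI$-boundedness argument: the stages at which a cone can still be avoided are bounded below $\omega_1^x$, so at some countable stage an iterated-jump image of $\D_A$ contains a cone. This is where the full strength of Lemma \ref{lemma: doubjump}, in particular the double-jump surjectivity of clause (3), and the hyperarithmetic machinery $(\leq_h,\ \omega_1^x,\ \CK)$ enter, and where $\mathrm{DC}$ is used to assemble the transfinite sequence of cone bases. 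I expect the boundedness step, and the verification that uncountability and cardinality are preserved along transfinite jumps, to be the delicate part; the earlier reductions and the cone case are routine by comparison.
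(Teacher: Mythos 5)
Your reductions (reading $\mathrm{CH}$ as ``uncountable implies size $2^{\aleph_0}$'', the selection device inside a degree, Case (1) via Lemma \ref{lemma: doubjump}) are fine, but the transfinite iteration you propose for Case (2) --- which you correctly identify as the heart of the matter --- has a genuine, unfixable gap. First, the closing-off step is unjustified: $\Sigma^1_1$-boundedness applies to $\Sigma^1_1$ sets of well-ordering codes, and nothing in your construction is $\Sigma^1_1$. The set $A$ is an arbitrary set of reals (in particular there is no ``real $x$ coding $A$''), and the cone bases $\bd_\alpha$ are produced by applications of $\TD$, which carry no definability whatsoever; there is simply no $\Sigma^1_1$ object whose rank you could bound. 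Second, and fatally, the claim you want boundedness to deliver is false: the avoidance can persist through \emph{every} stage at which the iteration is defined. Take $A$ to be the set of reals Cohen generic for all hyperarithmetic dense sets; this set is comeager, hence uncountable provably in $\ZF$. For such a generic $x$ one has $\omega_1^x=\CK$ and $x^{(\alpha)}\equiv_T x\oplus\emptyset^{(\alpha)}$ for every $\alpha<\CK$, and $x\oplus\emptyset^{(\alpha)}\not\geq_T\emptyset^{(\alpha+1)}$ (otherwise $x^{(\alpha+1)}\leq_T x^{(\alpha)}$). Hence every iterated-jump image $\{\ba^{(\alpha)}\mid\ba\in\D_A\}$ avoids the cone above $\deg(\emptyset^{(\alpha+1)})$, so your iteration never reaches a stage containing a cone; moreover, since $\omega_1^a=\CK$ for every $\ba\in\D_A$, the quantity $\ba^{(\alpha)}$ is not even well defined past $\CK$, so the iteration cannot be prolonged to produce a contradiction either. (Degree-invariant jump iteration is only canonical for $\alpha<\omega_1^a$; this is a second problem your sketch does not address.)

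The missing idea is a change of target for $\TD$: one must not apply it to $\D_A$ or to its jump \emph{images} (membership there demands that a degree \emph{equal} the double jump of a degree meeting $A$), but to the set $\{y\mid\exists r\in A\,(r\leq_T y''\wedge r\not\leq_T y)\}$, the degrees whose double jump \emph{bounds} a member of $A$ that the degree itself does not compute. Unlike $\D_A$, this set is provably cofinal when $A$ is uncountable: given $x$, pick $r^*\in A$ with $r^*\not\leq_T x$, apply Lemma \ref{lemma: doubjump} to $\x=\deg(x)$ and $\z=\deg(x''\oplus r^*)$; by clause (3) infinitely many $\y_r$ satisfy $\y_r''=\z$, and by clause (2) at most one of them can compute $r^*$, so some $y\geq_T x$ has $r^*\leq_T y''$ and $r^*\not\leq_T y$. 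Being cofinal, this set contains a cone by $\TD$, so no second case ever arises. What remains is exactly the injectivity problem: the canonical choice $x\mapsto y_x\in A$ with $y_x\leq_T x''$, $y_x\not\leq_T x$ need not be injective on the whole cone, and the paper resolves this by constructing a $\Sigma^1_1(z_0)$ set $B$ inside the cone such that every real hyperarithmetic in $z_0$ is computable from every member of $B$, while any two distinct members of $B$ form a minimal pair over $z_0$ in the hyperarithmetic sense; on $B$ the map $x\mapsto y_x$ is injective, $B$ contains a perfect set, and Cantor--Schr\"oder--Bernstein finishes. Note that this injects a perfect set \emph{into} $A$ without producing a perfect subset \emph{of} $A$, so your worry that any cone-based argument would secretly prove the perfect set property (and hence need $\sTD$) is unfounded.
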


In this section, we sketch    a recursion theoretical  proof of Theorem \ref{theorem: sami theorem} to show that  $\mathrm{DC}$ can be removed from the assumption, which was also observed by Sami. I.e. we have the following fact.
\begin{proposition}[Sami]\label{proposition: sami theorem}
$\ZF+\TD$ proves $\mathrm{CH}$. 
\end{proposition}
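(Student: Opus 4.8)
\noindent\emph{Proof strategy.} The plan is to read $\mathrm{CH}$ in the choiceless setting as the statement that no set of reals has cardinality strictly between $\aleph_0$ and $2^{\aleph_0}$; equivalently, that every uncountable $A\subseteq 2^{\omega}$ admits an injection $2^{\omega}\hookrightarrow A$. (Under $\TD$ the reals are not well-orderable --- a well-ordering would let one build a Bernstein set of degrees violating $\TD$ --- so the injective reading, rather than ``$2^{\aleph_0}=\aleph_1$'', is the correct one.) Throughout I use $\CCR$, which is available by Theorem~\ref{theorem: td ccr}. Fix an uncountable $A$ and write $D_A=\{\x\in\D:\x\cap A\neq\emptyset\}$ for the set of degrees \emph{realized} by $A$. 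First I record that $D_A$ is uncountable: each Turing degree is a countable set of reals (only countably many functionals $\Phi_e$), so were $D_A$ countable, $A$ would be a countable union of countable sets and hence countable by $\CCR$. The whole argument then reduces to a single statement about degrees, a \emph{perfect-set property for $\D$}: there is a perfect tree $T\subseteq 2^{<\omega}$ whose distinct paths are pairwise Turing-inequivalent and all of whose path-degrees lie in $D_A$.

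\noindent Granting this tree, here is the device that eliminates $\mathrm{DC}$ from Sami's original argument. For each pair of indices $(e,e')$ put
\[
 R_{e,e'}=\{\,p\in[T] : \Phi_e^{p}\in 2^{\omega},\ \Phi_e^{p}\in A,\ \Phi_{e'}^{\Phi_e^{p}}=p\,\},
\]
a set defined directly from $A$ and $T$ with no choices made. Since $\deg(p)\in D_A$, each $p\in[T]$ is Turing-equivalent to a member of $A$, so some index pair places $p$ into $R_{e,e'}$; thus $[T]=\bigcup_{(e,e')}R_{e,e'}$. Define $\Psi(p)=\Phi_e^{p}$, where $(e,e')$ is the \emph{least} pair (in a fixed enumeration of $\omega\times\omega$) with $p\in R_{e,e'}$. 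Then $\Psi\colon[T]\to A$ is total, and crucially it needs no form of choice, since the witnessing pair is selected by the well-ordering of $\omega$. If $\Psi(p)=\Psi(q)=z$ then $z\equiv_T p$ and $z\equiv_T q$, whence $p\equiv_T q$ and so $p=q$ because the paths of $T$ are pairwise Turing-inequivalent. Hence $\Psi$ injects $[T]\cong 2^{\omega}$ into $A$, giving $|A|=2^{\aleph_0}$. This replacement of an appeal to $\mathrm{DC}$ by a canonical least-index selection is the point of the recursion-theoretic reformulation.

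\noindent It remains to extract the tree $T$ from $\TD$, and this is where I expect the main difficulty. When $D_A$ contains an upper cone with base $\ba$ the tree is immediate: apply Lemma~\ref{lemma: doubjump} to $\ba$ to obtain $\{\y_r:r\in\mathbb{R}\}$, which by clauses (1) and (2) are pairwise independent degrees strictly above $\ba$, all in the cone and hence in $D_A$, and prune the induced perfect tree of representatives so that distinct paths have distinct degrees. The genuine obstacle is the general case, in which $D_A$ is uncountable but contains no cone (for instance when $A$ is a perfect set of minimal covers, so that $D_A$ is not even cofinal). Here I would build $T$ by a level-by-level fusion, using $\TD$ at each splitting node --- applied to the auxiliary degree set coding which extensions are realized by $A$ --- to locate two Turing-inequivalent degrees of $D_A$ continuing the current branch, and using the double-jump control (3) of Lemma~\ref{lemma: doubjump} to pin these extensions down \emph{canonically}. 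The objective is to make the entire recursion definable from $A$ together with one fixed real, so that only $\CCR$ (not $\mathrm{DC}$) is needed to pass through limit levels; this canonical choice of witnesses, in place of the dependent choices in Sami's construction, is exactly what permits dropping $\mathrm{DC}$. In effect the theorem reduces to establishing a perfect-set property for sets of Turing degrees from $\TD$ alone --- weaker than the perfect-set property for sets of reals, which in this paper requires the stronger hypothesis $\sTD$.
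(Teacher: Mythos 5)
There is a genuine gap, and it sits exactly where you flagged it. The second half of your argument (given the tree $T$, the least-index map $\Psi$ injecting $[T]$ into $A$ without any appeal to choice) is correct. But the whole proof rests on your ``perfect-set property for $\D$'': a perfect tree whose paths are pairwise Turing-inequivalent and whose degrees all lie in $D_A$, i.e.\ a perfect subset of the Turing-invariant set $\tilde{A}=\{y\mid \exists a\in A\,(y\equiv_T a)\}$. This is never proved, and the proposed repair --- a fusion that ``uses $\TD$ at each splitting node'' --- does not work as described: $\TD$ only yields information when the relevant invariant set contains a cone, and in the general case (your own minimal-degrees example, localized to any node $\sigma$) the invariant set of degrees realized by $A\cap[\sigma]$ may contain no cone, in which case $\TD$ only tells you that the \emph{complement} contains a cone, which is useless for keeping paths inside $D_A$. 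What you are asking for is essentially the perfect subset property for Turing-invariant sets, which is precisely the kind of regularity property that this paper derives only from the stronger hypothesis $\sTD$ (Theorem \ref{theorem: conseuqnce of std}), and whose derivability from $\TD$ alone is the long-standing open question mentioned in the introduction. So your reduction replaces $\mathrm{CH}$ by a statement that is, as far as anyone knows, harder than what $\TD$ gives.

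The paper's proof avoids this demand entirely, and the difference is instructive: it never requires the perfect set to sit inside the degrees of $A$. By Lemma \ref{lemma: doubjump} and $\TD$ one gets a single $z_0$ such that \emph{every} $y\geq_T z_0$ computes, below $y''$, some element of $A$ that is not computable from $y$ --- note this only uses that the relevant invariant set is cofinal, hence contains a cone, so $\TD$ applies cleanly. Then one builds in $\ZF$ a $\Sigma^1_1(z_0)$ perfect set $B$ of reals, each bounding all reals hyperarithmetic in $z_0$, such that distinct members of $B$ form a minimal pair in the hyperdegrees over $z_0$. The least-index assignment $x\mapsto y_x\in A$ with $y_x\leq_T x''$ and $y_x\not\leq_T x$ is then injective: a common value $y_{x_0}=y_{x_1}$ would be hyperarithmetic in both $x_0$ and $x_1$, hence hyperarithmetic in $z_0$, hence Turing below $x_0$ --- a contradiction. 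In other words, the elements of $A$ produced are allowed to have degrees unrelated to the paths; one needs only ``new below the double jump,'' with injectivity supplied by hyperarithmetic independence rather than by Turing equivalence. That weakening of what the perfect set must do is the idea your proposal is missing, and it is what makes the theorem provable from $\TD$ rather than $\sTD$.
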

\begin{proof}Given an uncountable  set $A\subseteq \mathbb{R}$. By Lemma \ref{lemma: doubjump}, for any real $x$, there is a real $y>_T x$ so that there is some real $r\in A$ Turing below $y''$ but not below $y$. So, by $\TD$, there is some real $z_0$ so that for any $y\geq_T z_0$, there is some real $r\in A$ Turing below $y''$ but not below $y$.

Now it is simple to construct a $\Sigma^1_1(z_0)$ set $B$\footnote{We  sketch a proof of this and leave the details to readers. First note that the set $\{y\mid \forall r\leq_h z_0(r\leq_T y)\}$ is an uncountable $\Sigma^1_1(z_0)$-set. Then one may construct a perfect set $P\subseteq B$ so that any two different members from $P$ form a minimal pair over $z_0$ in the hyperarithmetic degree sense.}  so that 
\begin{itemize}
\item[(i)] For any $y\leq_h z_0$ and $x\in B$, we have that $y\leq_T x$; and
\item[(ii)] For any $x_0\neq x_1\in B$, if $y\leq_h x_0, x_1$, then $y\leq_h z_0$.
\end{itemize}

Now for any real $x\in B$, we may pick up some real $y_x\in A$ Turing below $x''$ but not below $x$. For any $x_0\neq x_1\in B$, if $y_{x_0}=y_{x_1}$, then by (ii), $y_{x_0}=y_{x_1}\leq_h z_0$. By (i), we have that $y_{x_0}=y_{x_1}\leq_T x_0$, which is a contradiction. 

So $x\mapsto y_x$ is a $1-1$ map from $B$ to $A$. It is known that every uncountable analytic set has  a perfect subset and so $A$ has the same power as $\mathbb{R}$. 
\end{proof}

From the proof of Proposition \ref{proposition: sami theorem}, we can see the following fact that we will use it later in the paper.
\begin{lemma}[Sami \cite{Sami89}]\label{lemma: sami lemma}
Assume $\ZF+\TD$. For any uncountable set $A$ of reals, there is a perfect set $P$ of reals and a sequence of arithmetical  functions \footnote{Actually Sami proves that $f_n$ can be continuous. But we only need this weaker version here.} $\{f_n\}_{n\in \omega}$ from $P$ to $\mathbb{R}$ so that  $P\subseteq \bigcup_{n\in \omega}f_n^{-1}(A)$. Moreover, restricted to $P$, $f_n$ is 1-1 for every $n$.
\end{lemma}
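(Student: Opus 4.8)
The plan is to extract Lemma \ref{lemma: sami lemma} directly from the proof of Proposition \ref{proposition: sami theorem}, observing that that argument already produces all the required objects; the only work is to isolate them and verify the stated properties. So first I would re-run the construction: given an uncountable $A\subseteq\mathbb{R}$, apply Lemma \ref{lemma: doubjump} together with $\TD$ to obtain a real $z_0$ such that for every $y\geq_T z_0$ there is some $r\in A$ with $r\leq_T y''$ but $r\not\leq_T y$. Then take the $\Sigma^1_1(z_0)$ set $B$ constructed in the footnote of the proposition, together with a perfect subset $P\subseteq B$ whose distinct members form hyperarithmetic minimal pairs over $z_0$, so that properties (i) and (ii) hold on $P$. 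This $P$ is the perfect set claimed by the lemma.

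Next I would produce the functions $f_n$. The map $x\mapsto y_x$ from the proof sends each $x\in P$ to a witness $y_x\in A$ with $y_x\leq_T x''$ and $y_x\not\leq_T x$. The key point is that $y_x\leq_T x''$ means $y_x=\Phi_e^{x''}$ for some index $e$, and since $x''$ is itself computed from $x$ by a fixed double-jump functional, each candidate $y_x$ is the value at $x$ of one of countably many fixed arithmetical (indeed $\Sigma^0_3$-uniform) functions of $x$. Concretely, for each $e\in\omega$ I would define $f_e(x)=\Phi_e^{x''}$ on the domain where this converges to a total real; each such $f_e$ is arithmetical in $x$. Since for every $x\in P$ the witness $y_x$ equals $f_e(x)$ for at least one $e$, and $y_x\in A$, we get $x\in f_e^{-1}(A)$ for that $e$, hence $P\subseteq\bigcup_{n\in\omega}f_n^{-1}(A)$. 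This is exactly the cofinality content of the choice of $y_x$ in the proposition, now read off uniformly rather than pointwise.

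The remaining task is to arrange that each $f_n$ is $1$-$1$ on $P$. The injectivity argument in the proposition shows only that $x\mapsto y_x$ is injective across the different indices jointly; to get injectivity of each individual $f_n$, I would thin the perfect set. Given $f_n$ on $P$, the set of pairs $x_0\neq x_1\in P$ with $f_n(x_0)=f_n(x_1)$ is an arithmetical (in $z_0$) relation, and the argument using (i) and (ii)---if $f_n(x_0)=f_n(x_1)$ then this common value is $\leq_h z_0$, forcing it to be $\leq_T$ both $x_0$ and $x_1$ yet also $\not\leq_T x_0$, a contradiction---already rules out collisions provided the common value lies strictly above $z_0$ in the relevant sense. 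I expect the main obstacle to be making this uniform over all $n$ simultaneously: one must either pass to a single perfect subset on which every $f_n$ is injective (using a fusion-style construction that at stage $n$ splits the tree to defeat collisions of $f_n$, which requires only $\CCR$, already available), or, more cheaply, discard from $P$ the countably many points where some $f_n$ is not injective and check that an uncountable, hence perfect-subset-containing, remainder survives. I would favor the fusion approach, since it keeps everything inside a genuine perfect set and the splitting at each stage is governed by an arithmetical predicate in $z_0$, so no choice beyond $\CCR$ is needed.
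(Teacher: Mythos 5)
There is a genuine gap, and it sits exactly where you anticipated trouble: the injectivity clause. Your functions are defined by $f_e(x)=\Phi_e^{x''}$ wherever this converges to a total real, and you then try to recover injectivity of each $f_e$ by shrinking $P$ (fusion) or by discarding ``countably many'' bad points. Neither repair can work. Take $\Phi_e$ to be a functional that ignores its oracle and outputs a fixed recursive real $r$. Then your $f_e$ is total and \emph{constant} on all of $P$, hence fails to be $1$-$1$ on every subset of $P$ with at least two points: no fusion can ``defeat'' these collisions, and the set of collision points is all of $P$, not a countable set. The obstacle is not, as you suggest, uniformity over $n$; it is that your $f_n$'s, as defined, can be irreparably non-injective, and no choice of perfect subset fixes this.

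The missing idea (which is the paper's actual move) is to build partiality into the definition of $f_n$ rather than to thin $P$: declare $f_n(x)\uparrow$ whenever $\Phi_n^{x''}$ is not total \emph{or} $\Phi_n^{x''}\leq_T x$, and set $f_n(x)=\Phi_n^{x''}$ otherwise. Coverage survives this filter, because for each $x\in P\subseteq B$ the witness $r\in A$ satisfies $r\leq_T x''$ and $r\not\leq_T x$, so the index computing $r$ from $x''$ passes the filter at $x$. And injectivity of each individual $f_n$ is then automatic, by precisely the argument you quote: if $x_0\neq x_1\in P$ and $f_n(x_0)=f_n(x_1)=y$, then $y\leq_h x_0$ and $y\leq_h x_1$, so by (ii) $y\leq_h z_0$, so by (i) $y\leq_T x_0$ --- contradicting the fact that definedness of $f_n(x_0)$ now \emph{guarantees} $y\not\leq_T x_0$. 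With this one change your proof closes with no fusion argument, no appeal to $\CCR$ beyond what the proposition already used, and no thinning of $P$ at all; the rest of your proposal (the choice of $z_0$, $B$, $P$, and the coverage argument) matches the paper.
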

\begin{proof}
Fix an effective enumeration of Turing functional $\{\Phi_n\}_{n\in \omega}$. In the proof of Proposition \ref{proposition: sami theorem}, let $P$ be a perfect subset of $B$. Define $f_n: P\to \mathbb{R}$ so that
\begin{equation}
f_n(x)=\left\{
\begin{array}{rcl}
	\uparrow  & &(\exists m \Phi_n^{x''}(m)\mbox{ is not defined}) \vee( \Phi_n^{x''}\leq_T x);\\
	\Phi_n^{x''} & &  \mathrm{Otherwise}.\\
	\end{array}
	\right
	.
\end{equation}

Clearly $f_n$ is arithmetical for every $n$. Since $P\subseteq B$, we have that $P\subseteq \bigcup_{n\in \omega}f_n^{-1}(A)$. Moreover, if $x\in P$ and $f_n(x)$ is defined, then $f_n(x)\leq_T x''\wedge f_n(x)\not\leq_T x$. Then by the same reason as in the proof of the theorem, $f_n$ must be 1-1 on $P$.  So $\{f_n\}_{n\in \omega}$ is as required.
\end{proof}
\section{Weakly dependent choice}\label{section: wdc}

Throughout the section, we work within $\ZF+\TD$.

\begin{definition}\label{definition: wdc}
Weakly dependent choice for sets of reals, or $\wDC$, says that for any binary relation $R$ over $\mathbb{R}$ with the property that   the set $\{y\mid R(x,y)\}$ has positive inner measure for any real $x$, there is a sequence $\{x_n\}_{n\in \omega}$ of reals so that $\forall n R(x_n,x_{n+1})$.
\end{definition}

\begin{theorem}\label{theorem: consequence of TD}
$\ZF+\TD$ implies $\wDC$

\end{theorem}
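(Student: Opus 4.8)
The plan is to build the required sequence $\{x_n\}_{n\in\omega}$ by appealing to $\TD$ applied to a carefully chosen set of Turing degrees, using the hypothesis that each section $R_x=\{y\mid R(x,y)\}$ has positive inner measure. The point-to-set principle (Fact \ref{fact: schnorr random }) is the natural bridge: since $R_x$ has positive inner measure, it is \emph{not} null, so for every real $x$ there is some oracle $w$ such that $R_x$ contains a $\Gamma(w)$-random real. First I would make this uniform by coding: working within $\ZF+\TD$ (and $\CCR$, which we have freely), I want to extract from each $x$ a degree-theoretic witness. The idea is that positive inner measure means there is a closed (or $\pic$) subset of $R_x$ of positive measure; relative to an oracle computing this closed set, a sufficiently random real lands inside it, and crucially such a random real can be found Turing above any prescribed degree (randoms are cofinal in the Turing degrees, and one may take a random relative to an arbitrary oracle).

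The core step is to set up an upper-cone argument. Consider the set $C$ of Turing degrees $\mathbf{d}$ with the property that $\mathbf{d}$ computes (a code for a positive-measure closed subset of) $R_x$ simultaneously for a "generic enough" base point $x$ of lower degree, together with a $\Gamma$-random real inside that subset. More precisely, I would try to show that $\{\mathbf x\mid \exists y\,(R(x,y)\andd y\geq_T x)\}$ meets a cone: for a fixed oracle $z_0$ coding the relation $R$ in an appropriate way, every $y\geq_T z_0$ computes a real $y^*$ with $R(y,y^*)$ and $y^*\geq_T y$, because relative to $y$ one can search for a positive-measure closed subset of $R_y$ and then use $y$ itself as the randomness oracle to produce a member of that subset Turing above $y$. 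By $\TD$ this "stepping up" property holds on an upper cone with base $z_0$. Iterating the step-up functional produces, from any single starting real $x_0\geq_T z_0$, a whole sequence $x_0\leq_T x_1\leq_T x_2\leq_T\cdots$ with $R(x_n,x_{n+1})$ for all $n$, and the existence of this single sequence is exactly $\wDC$; note that the iteration requires only a single real as a seed and then proceeds recursively, so no choice beyond $\CCR$ is needed to assemble it.

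The hard part will be the uniformity and definability of the step-up map, i.e.\ turning "$R_x$ is non-null, hence contains a $\Gamma$-random real above $x$" into a \emph{degree-invariant} function whose domain is an upper cone, so that $\TD$ can be invoked. Two obstacles stand out. First, positive inner measure gives a closed subset of positive measure but not canonically from $x$ alone without choice; I expect to handle this by relativizing the point-to-set principle so that the relevant oracle is absorbed into the cone base $z_0$, using lowness properties of the randomness notion $\Gamma$ (for instance that low-for-$\Gamma$ oracles do not destroy randomness) to keep the witness-producing functional well behaved across the cone. Second, one must ensure the produced witness genuinely lies strictly above (or at least Turing above) $x$, which is where choosing a random relative to $x$ itself, rather than some unrelated oracle, is essential. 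Once the step-up functional $F$ is defined and shown total and degree-theoretically coherent on a cone via $\TD$, the construction of the sequence is routine: fix any $x_0$ in the cone and set $x_{n+1}=F(x_n)$.

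\begin{proof}[Proof sketch of approach]
Fix a relativizable randomness notion $\Gamma$. For a real $x$, the section $R_x$ having positive inner measure entails, by the relativized form of Fact \ref{fact: schnorr random }, that for every oracle $w$ coding a suitable positive-measure closed subset $F_x\subseteq R_x$ there is a $\Gamma(w)$-random real in $F_x$; taking $w=x$ and using that $\Gamma(x)$-random reals exist Turing above $x$, one obtains $y\geq_T x$ with $R(x,y)$. The goal is to render this uniform on a cone. Let $z_0$ be a real coding $R$ together with the lowness data for $\Gamma$ needed below. Define a functional $F$ on degrees $\geq_T z_0$ by letting $F(x)$ be the real obtained by the above procedure relative to $x$; the lowness properties of $\Gamma$ guarantee that randomness witnesses survive relativization so that $F$ is total and degree-coherent on the cone. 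Applying $\TD$ confirms that this step-up property holds on an upper cone with base $z_0$. Finally, choosing any $x_0\geq_T z_0$ and iterating, $x_{n+1}=F(x_n)$, yields a single sequence $\{x_n\}_{n\in\omega}$ with $R(x_n,x_{n+1})$ for all $n$, witnessing $\wDC$. The delicate point, handled by absorbing the measure-theoretic witness oracle into $z_0$ and invoking lowness for $\Gamma$, is the degree-invariance of $F$ required to apply $\TD$.
\end{proof}
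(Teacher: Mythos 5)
Your proposal founders exactly on the step you yourself flag as ``the hard part'', and the patch you suggest does not work. The relation $R$ is an arbitrary set of pairs of reals: it has no code, so there is no real $z_0$ ``coding the relation $R$'', and relative to an oracle $y$ one cannot ``search for a positive-measure closed subset of $R_y$'' --- the collection of codes of closed positive-measure subsets of $R_y$ is itself an arbitrary set, invisible to any Turing functional. The point-to-set principle (Fact \ref{fact: schnorr random }) only gives, for each $x$, the \emph{existence} of some oracle $w_x$ all of whose Schnorr randoms land in (the Turing-upward closure of) $R_x$; these witnesses vary with $x$ and are unbounded in the degrees, so they cannot be absorbed into a single cone base. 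This is precisely the difficulty the paper's proof is organized around: it (a) replaces $R$ by a relation $\tilde R$ whose sections are Turing-upward closed, hence co-null; (b) invokes the Terwijn--Zambella theorem (Theorem \ref{theorem: low for schnorr random}), i.e.\ \emph{jump inversion} for low-for-Schnorr reals, to obtain cofinally many $x$ whose double jump $x''$ bounds the needed witness oracle while $x$ itself remains Schnorr-low --- your appeal to lowness (``low-for-$\Gamma$ oracles do not destroy randomness'') captures only the trivial half and misses the jump-inversion half, which is the actual content; and (c) packages everything into sets $A_e$ whose defining clause quantifies universally over all $r_0\leq_T \Phi_e^{x''}$ and all $x''$-Schnorr random $r_1$, so that $R$-relatedness of consecutive terms follows combinatorially, with no need to compute anything from $R$.

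There are two further breakdowns. First, the set underlying your ``step-up property'' is not degree-invariant (it depends on the real $x$, not on its degree), and even after passing to degrees, $\TD$ only yields that a cone of degrees contains witnesses; it does not produce a step-up \emph{function} $F$. Iterating the bare statement ``for every $x$ in the cone there exists $y\geq_T x$ with $R(x,y)$'' requires choosing a witness at each stage, which is dependent choice --- the very thing being proved --- so your construction is circular unless the witnesses are canonical. (The paper makes them canonical: once $\TD$ gives a base $x_0$ such that every degree above it contains a member of $A_{e_0}$, the reals $y_n\equiv_T x_0^{(2n)}$ in $A_{e_0}$ and the randoms $r_n\leq_T \Phi_{e_0}^{y_n''}$ can be fixed by least Turing indices relative to the single real $x_0^{(\omega)}$.) Second, your step-up clashes with a basic fact: a real that is random relative to $y$ cannot compute a nonrecursive $y$, since the set of reals computing $y$ is null; so ``use $y$ itself as the randomness oracle to produce a member of that subset Turing above $y$'' is impossible, and \Kuc-style coding, which would give a member computing $y$, produces a real that is not $y$-random and anyway requires a code for the closed set, which you do not have. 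The paper's sequence $\{r_n\}$ sidesteps all of this: the $r_n$ are never required to increase in Turing degree; $R(r_n,r_{n+1})$ holds because $r_n\leq_T \Phi_{e_0}^{y_n''}$ while $r_{n+1}$ is $y_n''$-Schnorr random, which is exactly what the universal clause in the definition of $A_{e_0}$ was designed to exploit.
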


We remark that if   ``having positive inner measure"  is replaced with having Baire property and non-meager in the definition of $\wDC$, then the theorem still holds.

A real $r$ is called {\em not Schnorr random} if there is a recursive sequences of recursive open set $\{V_{n}\}_{n\in \omega}$ with $\forall n \mu(V_n)=2^{-n}$ so that $r\in \bigcap_{n\in \omega} V_n$. Otherwise,  $r$ is called {\em Schnorr random}. It is not difficult to see that there is a Schnorr random $r\leq_T \emptyset'$.

A real $x$ is called {\em low for Schnorr random} if every Schnorr random real is Schnorr random relative to $x$. The following theorem, which was proved by Sacks forcing, is due to Terwijn and Zambella.

\begin{theorem}[Terwijn and Zambella \cite{TZ}]\label{theorem: low for schnorr random}
For any real $y\geq_T \emptyset''$, there is a real $x$ low for Schnorr random so that $x''\equiv_T y$.
\end{theorem}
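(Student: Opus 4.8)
The plan is to build $x$ by a perfect-tree (Sacks) forcing construction run recursively in the oracle $y$, carrying two disjoint families of requirements: \emph{traceability} requirements forcing $x$ to be computably traceable, and \emph{coding} requirements pinning $x''$ down to $y$. Here $x$ is \emph{computably traceable} if there is a computable order $h$ such that every $f\leq_T x$ has a computable trace $\{V_n\}$ with $f(n)\in V_n$ and $|V_n|\leq h(n)$. The first reduction is that computable traceability suffices for lowness for Schnorr randomness: given such a trace, any Schnorr test relative to $x$ is produced by an $x$-computable function, and tracing that function lets one replace each $x$-relative level by the union of its at most $h(n)$ traced candidates; since the defining measures stay computable and summably small, the union is covered by a genuine (unrelativized) Schnorr test. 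Hence every Schnorr random real stays Schnorr random relative to $x$, and it is enough to produce a computably traceable $x$ with $x''\equiv_T y$.

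For the traceability requirements I would first construct, by a purely computable fusion, a computable perfect tree $\hat T\subseteq 2^{<\omega}$ all of whose paths are computably traceable. Handling $\Phi_e$ at the $e$-th fusion step, above each $e$-th splitting node I thin to an $e$-splitting subtree, separating paths only when they force distinct values of $\Phi_e$, and I slow the splitting so that for every $n$ the set $V^e_n=\{\Phi_e^z(n):z\in[\hat T]\}$ is finite with a computable bound $h(n)$ (absorbing the factor $2^e$ by letting $R_e$ act only above level $e$). Because $\hat T$ is computable, each $V^e_n$ is computable uniformly in $n$, and it traces $\Phi_e^x$ for every path $x$ on which that functional is total. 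The crucial feature is that this trace never consults $y$, so traceability, and hence lowness for Schnorr randomness, survives the later $y$-guided choice of a single path.

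To realize $x''\equiv_T y$ I would reserve, during the computable construction of $\hat T$, a computable antichain of ``coding'' splitting nodes placed at splitting levels disjoint from those used for the traceability thinning, and then select a path $x\in[\hat T]$ by a second fusion run recursively in $y$. The hypothesis $y\geq_T\emptyset''$ is exactly what is needed, since the totality or partiality of $\Phi_e$ along the generic is a $\Sigma^0_2(x)$ event, and an oracle of double-jump strength is required both to decide these events during the construction and to read them off afterwards. For the lower bound $y\leq_T x''$, I code each bit of $y$ into whether the chosen path makes a designated functional total or partial at the corresponding coding node, so that the bit is uniformly recoverable by a $\Sigma^0_2(x)$ question. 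For the upper bound $x''\leq_T y$, I run the whole selection $y$-recursively, so that $y$ can simulate the construction and read off its totality/partiality commitments, thereby deciding the $\Sigma^0_2(x)$ theory; combining the two bounds gives $x''\equiv_T y$.

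The main obstacle is the tension between the two families: computable traceability forces $x$ to be of hyperimmune-free degree, hence very low-like, yet $x''$ must code the arbitrarily complicated $y$. Separating the layers is what resolves this, since the traceability thinning is carried out once and for all over a computable tree and so never depends on $y$, while all of the $y$-dependence is confined to the choice of path through the reserved coding nodes. The delicate part is then the jump calculus of the coding fusion: arranging that the coded bits become visible at exactly the double-jump level, neither already at the single jump (which would make $x$ too powerful and destroy hyperimmune-freeness) nor above $y$, that is, executing the standard Sacks jump-inversion bookkeeping with $\emptyset''$ replaced by the oracle $y$ while leaving the already-fixed computable traceability of $\hat T$ untouched.
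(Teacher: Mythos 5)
Your overall architecture follows the intended route (the paper itself does not prove this theorem; it cites Terwijn--Zambella and only notes that the proof is by Sacks forcing), and your first reduction --- computable traceability implies lowness for Schnorr randomness, so it suffices to build a computably traceable $x$ with $x''\equiv_T y$ --- is the right one. But the load-bearing claim of your second paragraph is false: there is no \emph{computable} perfect tree $\hat T\subseteq 2^{<\omega}$ all of whose paths are computably traceable. With the paper's convention that trees have no dead nodes, the splitting structure of a computable perfect tree is itself computable (to find the least splitting node above a given node, search; the search halts by perfectness). Hence the map sending $z\in 2^{\omega}$ to the path $x_z\in[\hat T]$ that turns left or right at successive splitting nodes according to $z$ is degree-preserving: $x_z\equiv_T z$ for every $z$. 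Taking $z=\emptyset'$ gives a path of degree $\mathbf{0}'$, which is of hyperimmune degree and therefore not computably traceable (from a trace $\{V_n\}$ with $|V_n|\leq h(n)$ for $f\leq_T x$ one gets the computable function $n\mapsto\max V_n$ dominating $f$). So no ``purely computable fusion'' can do what you ask, and the ``crucial feature'' of your plan --- that the traceability part never consults $y$ --- cannot be salvaged: deciding, for a computable condition $T$ and a functional $\Phi_e$, whether some node of $T$ has no extension making $\Phi_e(n)$ converge (the dichotomy your thinning needs) is genuinely a $\Sigma^0_2$ question about $T$, and this is exactly where $\emptyset''$ (hence the hypothesis on $y$) enters, not only in the coding.

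The repair is to interleave rather than separate: run a single fusion recursively in $y$, alternating traceability steps and coding steps, where each \emph{condition} is a computable perfect tree even though the sequence of conditions is only $y$-computable. At a traceability step, $y\geq_T\emptyset''$ decides the dichotomy; in the totality case the thinned subtree is again a computable tree, and the trace $V^e_n$ is read off from that single computable condition, so it is computable --- this, and not computability of the final fusion tree, is why every path of the final ($y$-computable) tree is computably traceable. Coding and jump inversion then go through essentially as you describe, using that $x''\geq_T\emptyset''$ holds automatically, so $x''$ can re-run the $\emptyset''$-answerable parts of the construction and read the bits of $y$ off the chosen branchings, while $y$ runs the entire construction and decides every $\Sigma^0_2(x)$ fact along the way, giving $x''\equiv_T y$.
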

\begin{proof} ({\em  of Theorem \ref{theorem: consequence of TD}.})

Fix any binary relation $R$ as stated in $\wDC$. To prove $\wDC$, we may assume that for any real $x$, the set $R_x=\{y\mid R(x,y)\}$ is upward closed under Turing reduction. I.e. for any $y$ and $z$, if $y\leq_T z$ and $y\in R_x$, then $z\in R_x$. To see this, we may define a new relation   $\tilde{R}$ so that $\tilde{R}(x,y)$ if and only if  for any real $z_0\leq_T x$, there is some real $z_1\leq_T y$ so that $R(z_0,z_1)$. Then for any real $x$, the set $\tilde{R}_x=\{y\mid \tilde{R}(x,y)\}$ is upward closed under Turing reduction and has positive measure, and so co-null. Moreover, if there is a sequence $\{y_n\}_{n\in \omega}$ so that $\forall n\tilde{R}(y_n,y_{n+1})$.  Then we build a sequence $\{x_n\}_{n\in \omega}$ so that $\forall nR(x_n,x_{n+1})$ as follows.

First let $x_0= y_0$. By the definition of $\tilde{R}$, we may pick up the least $m_1$ so that $\Phi_{m_1}^{y_1}$ is defined and $R(x_0, \Phi_{m_1}^{y_1})$. Let $x_1=\Phi_{m_1}^{y_1}$. Generally, if $x_n$ is defined, then $x_n\leq_T y_n$. So by the definition of $\tilde{R}$, we may pick up the least index $m_{n+1}$ so that $\Phi_{m_{n+1}}^{y_{n+1}}$ is defined and $R(x_n, \Phi_{m_{n+1}}^{y_{n+1}})$. Set $x_{n+1}=\Phi_{m_{n+1}}^{y_{n+1}}$. Then we have that $\forall n R(x_n,x_{n+1})$.

Now fix any real $z$, by the assumption on $R$ and Fact \ref{fact: schnorr random }, there is a real $z_0\geq_T z'$ so that for any $y\leq_T z'$ and $z_0$-Schnorr random $r$, $R(y,r)$.  Also by relativizing Theorem \ref{theorem: low for schnorr random} to $z$, there is a real $x>_T z$ low for $z$-Schnorr random so that $x''\geq_T z_0$. So for any $y\leq_T z'$ and $x''$-Schnorr random $r$, $R(y,r)$. Also note that there is a $z$-Schnorr random, and so $x$-Schnorr random, real $r\leq_T z'$. Since $x''\geq_T z'$, there is some index of Turing functional $e$ so that $\Phi_e^{x''}=z'$.  For any number $e\in \omega$, define the set \begin{multline*}A_e=\{x\mid \exists r(r\mbox{ is }x\mbox{-Schnorr random }\wedge r\leq_T \Phi_e^{x''})\\ \wedge \forall r_0\leq_T \Phi_e^{x''}\forall r_1(r_1\mbox{ is }x''\mbox{-Schnorr random}\rightarrow R(r_0,r_1))\}.\end{multline*}

Then by the discussion above, $\bigcup_{e\in \omega}A_e$ ranges Turing degrees cofinally. So there must be some $e_0$ so that $A_{e_0}$ ranges Turing degrees cofinally. By $\TD$, there is some $x_0$ so that for any $y\geq_T x_0$, there is some $y_0\equiv_T y $ in $A_{e_0}$. We may assume that $x_0\in A_{e_0}$. Recursively in $x_0^{(\omega)}$,  we first find a sequence of reals $$\{y_n\in A_{e_0}\mid n<\omega \wedge y_n\equiv_T x_0^{(2n)}\}.$$ Then find a sequence of reals $\{r_n\}_{n\in \omega}$ so that for any $n$,  $r_n\leq_T \Phi_{e_0}^{y_n''}$ is $y_n\equiv_T x^{(2n)}$-Schnorr random. Note for any $n$, $r_n\leq_T \Phi_{e_0}^{y_n''}$ and $r_{n+1}$ is $x^{(2n+2)}\equiv_T y_n''$-Schnorr random (see the figure below). So
by the definition of $A_{e_0}$,  $R(r_n,r_{n+1})$.

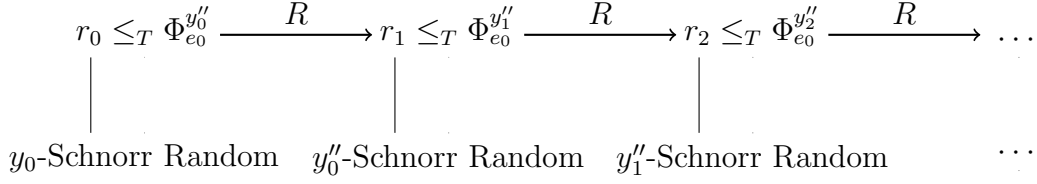
\begin{figure}[h]
\begin{tikzpicture}
\label{figure: figure of de}
  
   \filldraw 
    (3,-2) circle (0pt) node [above,color=black]{$r_0\leq_T \Phi^{y_0''}_{e_0}$}  
    (3,-3) circle (0pt) node [below,color=black]{$y_0$-Schnorr Random}  
    (7,-2) circle (0pt) node [above,color=black]{$r_1\leq_T\Phi^{y_1''}_{e_0}$}  
    (7,-3) circle (0pt) node [below,color=black]{$y_0''$-Schnorr Random}  
    (11,-2) circle (0pt) node [above,color=black]{$r_2\leq_T\Phi^{y_2''}_{e_0}$}  
    (11,-3) circle (0pt) node [below,color=black]{$y_1''$-Schnorr Random}  
                (14.5,-2) circle (0pt) node [above,color=black]{$\cdots$}  
                  (14.5,-3.5) circle (0pt) node [above,color=black]{$\cdots$}  ; 
   
       \draw(2.3,-2) -- (2.3,-3); 
       \draw[->,thick](4,-1.7) -- (6,-1.7);
        \draw(5,-1.7) node [above]{$R$};
   
	       \draw(6.3,-2) -- (6.3,-3); 
       \draw[->,thick](8,-1.7) -- (10,-1.7);
        \draw(9,-1.7) node [above]{$R$};
        	       \draw(10.3,-2) -- (10.3,-3); 
	              \draw[->,thick](12,-1.7) -- (14,-1.7);
        \draw(13,-1.7) node [above]{$R$};

\end{tikzpicture}\caption{$R(r_n,r_{n+1})$}
\end{figure}

 \end{proof}

 \pagebreak

\section{Regularity properties of sets of reals}\label{section: regular}
In this section, we prove some regularity properties for sets of reals under $\ZF+\sTD(+\DCR)$. Woodin already considered  $\sTD$ long time ago. All the results in this section must have been known to him.
\begin{theorem}[Woodin]\label{theorem: conseuqnce of std}
\begin{itemize}
\item[(1)] $\ZF+\sTD$ implies that every set is measurable and has Baire property.
\item[(2)] $\ZF+\sTD$ implies that every uncountable set of reals has a perfect subset.
\end{itemize}
\end{theorem}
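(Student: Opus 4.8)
The plan is to treat the two items by somewhat different routes, handling the perfect set property (2) first, since it follows cleanly from the material already developed, and then adapting the point-to-set philosophy to the measure and Baire-property statements in (1).

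For (2), suppose $A$ is uncountable. By Sami's Lemma (Lemma~\ref{lemma: sami lemma}) there are a perfect set $P$ and arithmetical functions $\{f_n\}_{n\in\omega}$, each injective on $P$, with $P\subseteq\bigcup_n f_n^{-1}(A)$. Shrinking $P$ to a pointed perfect subset (every perfect set contains one, and this preserves both the injectivity of each $f_n$ and the covering relation), I may assume $P$ is pointed, hence \emph{cofinal} in the Turing degrees: given any $w$, a path through the defining tree $T$ that codes $w$ at its splitting nodes computes $w$ together with $T$, and pointedness gives $T\leq_T x$, so $x\geq_T w$. Write $P_n=P\cap f_n^{-1}(A)$, so $P=\bigcup_n P_n$. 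The key observation is a \emph{cofinal-piece lemma}: if a cofinal set is a countable union $\bigcup_n P_n$, then some $P_{n}$ is already cofinal. Indeed, were each $P_n$ non-cofinal, then by $\CCR$ I could choose for each $n$ a real $x_n$ with no member of $P_n$ above it; letting $x^\ast=\bigoplus_n x_n$ and using cofinality of $P$ to pick $y\in P$ with $y\geq_T x^\ast$, the index $n_0$ with $y\in P_{n_0}$ would give $y\geq_T x_{n_0}$, a contradiction. Fix such a cofinal $P_{n_0}$. Now $\sTD$ applies to $P_{n_0}$ and yields a pointed (in particular perfect) set $Q\subseteq P_{n_0}$. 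Since $f_{n_0}$ is arithmetical and injective on $Q$, the image $f_{n_0}(Q)\subseteq A$ is an uncountable analytic set, and by the classical (ZF-provable) perfect set property for analytic sets it contains a perfect set. This perfect set lies in $A$, proving (2).

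For (1) my strategy is to reduce each regularity property to an inner-regularity statement and then to produce the required large closed/comeager witness with $\sTD$ via the point-to-set principle. For measurability it suffices to prove the dichotomy that \emph{every non-null set contains a closed set of positive measure}: applied to an arbitrary set and its complement inside a $G_\delta$-hull, this forces inner and outer measure to coincide. Symmetrically, using Fact~\ref{fact: schnorr random } in its genericity form, the Baire property for all sets reduces to the dichotomy that \emph{every non-meager set is comeager in some basic open set}. To obtain the first dichotomy, let $A$ be non-null; by the point-to-set principle $A$ contains a $z$-random real for every $z$, so the set $B=\{z\oplus r : r\in A,\ r \text{ is } z\text{-ML-random}\}$ ranges the Turing degrees cofinally (take $z=w$). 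By $\sTD$ there is a pointed perfect $Q\subseteq B$; projecting to the second coordinate gives an analytic set $D\subseteq A$, and since analytic sets are measurable it is enough to show $D$ is non-null, whereupon inner regularity of $D$ furnishes the desired closed positive-measure subset of $A$. The meager case is entirely parallel, replacing randoms by generics.

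\textbf{The main obstacle} is exactly this last step: controlling the \emph{size} of the projection $D$. The difficulty is that $\sTD$ only promises \emph{some} pointed set inside $B$, and a priori the adversary could return a $Q$ whose second coordinate is confined to a thin (null) perfect set, or even constant—so that $D\subseteq A$, though uncountable, is null and yields no positive-measure closed subset. This is the genuine gap between the Turing-cone information carried by pointed sets and the measure/category information we need. I expect to close it with the lowness-for-randomness technology already used in the proof of Theorem~\ref{theorem: consequence of TD}: by first passing, via a Terwijn--Zambella type result (Theorem~\ref{theorem: low for schnorr random}) together with van Lambalgen's theorem, to an oracle that is low for randomness, one arranges the coding in $B$ so that, relative to the tree $S$ of $Q$, \emph{almost every} $S$-random real occurs as a second coordinate; this forces $D$ to be conull in a neighborhood and completes the argument. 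Making this uniform—so that a single application of $\sTD$ delivers a pointed set with conull projection—is the crux of (1).
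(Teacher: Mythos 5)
Your treatment of part (2) follows the paper's own skeleton (Lemma \ref{lemma: sami lemma}, a cofinal-piece lemma via $\CCR$, an application of $\sTD$, and the classical perfect set property for analytic sets), but it hinges on a false claim: that every perfect set contains a pointed perfect subset. This is not merely unjustified; it is refuted by the very set $P$ that Lemma \ref{lemma: sami lemma} hands you. In the construction behind Proposition \ref{proposition: sami theorem}, $P\subseteq B$ where any two distinct members of $B$ form a minimal pair over $z_0$ in the hyperarithmetic degrees, while every member of $B$ computes every real $\leq_h z_0$. Suppose $Q\subseteq P$ were pointed, say $Q=[S]$ with $S\leq_T x$ for all $x\in Q$. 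Picking two distinct paths of $Q$, both compute $S$, so $S\leq_h z_0$ by the minimal-pair property; the leftmost path $x_L$ of $S$ satisfies $x_L\leq_T S$ (trees here have no dead nodes), hence $x_L\leq_h z_0$ and so $x_L'\leq_h z_0$; but $x_L\in B$ computes every real $\leq_h z_0$, giving $x_L'\leq_T x_L$, which is absurd. In particular, under $\sTD$ this $P$ cannot even range cofinally in the Turing degrees, so no amount of shrinking rescues the step. The repair is exactly the paper's Lemma \ref{lemma 5.2}: do not try to make $P$ cofinal; instead transfer the partition $P=\bigcup_n\bigl(P\cap f_n^{-1}(A)\bigr)$ through a homeomorphism $h\colon 2^{\omega}\to P$. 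In $2^{\omega}$ cofinality of the whole space is automatic, your (correct) cofinal-piece lemma produces a cofinal piece, $\sTD$ gives a pointed $Q$ inside that piece, and $h[Q]$ is a perfect---no longer pointed, but perfection is all you need---subset of $f_{n_0}^{-1}(A)$; from there your argument concludes as written.

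For part (1), you correctly reduce measurability to inner regularity and correctly locate the obstruction, but what you leave open is precisely the mathematical content of the theorem, so part (1) is not proved. With $B=\{z\oplus r : r\in A \mbox{ and } r \mbox{ is } z\mbox{-ML-random}\}$, nothing ties the randomness of $r$ to the degree of $z\oplus r$ or to the tree of the pointed set, and the projection $D$ may well be null. The paper closes this gap not by any uniformity in the application of $\sTD$, nor by van Lambalgen's theorem, but by building the needed ``self-referential'' randomness into the cofinal set before $\sTD$ is invoked: it sets $B_e=\{x : \Phi_e^{x''}\in A \mbox{ and } \Phi_e^{x''} \mbox{ is } x\mbox{-Schnorr random}\}$, so membership of $x$ demands a witness in $A$ that is random relative to $x$ itself and computed from $x''$. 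Cofinality of $\bigcup_e B_e$ is exactly what Theorem \ref{theorem: low for schnorr random} (Terwijn--Zambella) delivers: given $z$, take a $z$-Schnorr random $z_0\in A$ by Fact \ref{fact: schnorr random }, then take $x\geq_T z$ low for $z$-Schnorr randomness with $x''\geq_T z_0$; lowness upgrades $z_0$ from $z$-random to $x$-random. Pointedness of $P\subseteq B_{e_0}$ then converts directly into measure-theoretic size: $C=\{\Phi_{e_0}^{x''} : x\in P\}$ is an analytic subset of $A$ containing a $v$-Schnorr random real for every real $v$ (choose $x\in P$ of degree above $v$), hence $C$ is non-null by Fact \ref{fact: schnorr random }, contradicting the assumption that every measurable subset of $A$ is null. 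This is the step your sketch gestures at but does not carry out. Note also that your claim that the Baire-property case is ``entirely parallel'' would require an analogous lowness-for-genericity input, which you do not supply (the paper itself proves only the measure half and leaves category to the reader).
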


\subsection{The proof of part (1)}

We only prove that every set is measurable and leave the second part to readers.

It suffices to prove that for any set $A$, if every measurable subset of $A$ is null, then $A$ must be null. Now suppose that, for a contradiction,  every measurable subset of $A$ is null but $A$ is not null. Then, by Fact \ref{fact: schnorr random } with Schnorr randomness, for any real $z$, there is an $z$-Schnorr random real $z_0$ in $A$. By Theorem \ref{theorem: low for schnorr random} relative to $z$, there is a real $x$ low for $z$-Schnorr random and $x''\geq_T z_0$.

Now for any $e\in \omega$, let $$B_e=\{x\mid \Phi_e^{x''}\in A \mbox{ is an }x\mbox{-Schnorr random real}\}.$$

By the proof above, we have that $\bigcup_{e\in \omega} B_e$ ranges Turing degrees cofinally. Then there is some $e_0$ so that $B_{e_0}$ ranges Turing degrees cofinally. By $\sTD$, there is a pointed subset $P\subseteq B_{e_0}$.

Let $$C=\{r\mid \exists x\in P(\Phi_{e_0}^{x''}=r)\}.$$ $C$ is an analytic set and so measurable. Since $P$ is a pointed set, by the definition of $B_{e_0}$ and Fact  \ref{fact: schnorr random } with Schnorr randomness, $C$ is not null. This is absurd.

\bigskip

\subsection{The proof of part (2)} We first prove the following lemma.
\begin{lemma}\label{lemma 5.2}
Assume $\ZF+\sTD$. For any perfect set $P$ of reals and any partition $P=\bigcup_{n<\omega} B_n$, there exists $n$ such that $B_n$ has a perfect subset.
\end{lemma}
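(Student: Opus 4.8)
The plan is to use $\sTD$ in the form that we are now given: for a set of reals that ranges Turing degrees cofinally, we get a \emph{pointed} perfect subset. The statement says that if a perfect set $P$ is partitioned into countably many pieces $B_n$, then one of the pieces contains a perfect subset. My strategy is to pass from the perfect set $P$ to an associated pointed tree, use $\CCR$ (which the paper assumes throughout) to fix a uniform presentation, and then argue by contradiction that if no $B_n$ had a perfect subset we could still manufacture a pointed set inside some $B_n$, contradicting the failure of the perfect subset property there.

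\emph{Setup.} First I would fix a perfect tree $T\subseteq 2^{<\omega}$ with $[T]=P$, chosen so that $T\leq_T x$ for every $x\in P$ (we may assume $P$ is pointed to begin with, or relativize to a real that computes a canonical presentation of $P$; this uses $\CCR$ to select the tree). The key point is that $P$ is in bijection, via its tree, with $2^\omega$ in an effective, degree-preserving way: there is a homeomorphism $\pi\colon 2^\omega\to P$ computable from $T$ whose inverse is computable from $T$ together with any point, so that for $x\in P$ we have $x\equiv_T T\oplus \pi^{-1}(x)$, and in particular $x\geq_T T$. Pulling the partition back along $\pi$, we get a partition $2^\omega=\bigcup_n C_n$ with $C_n=\pi^{-1}(B_n)$, and it suffices to show some $C_n$ has a perfect subset, since $\pi$ (being a computable homeomorphism) carries perfect subsets to perfect subsets.

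\emph{Main argument.} Now I would show that for every real $z\geq_T T$ there is some $y\geq_T z$ lying in some $C_n$ whose index is controlled by $z$. Concretely, every $y\in 2^\omega$ with $y\geq_T T$ lies in exactly one $C_{n(y)}$. The idea is to consider the set $A=\{\,z^\frown n : n=n(z)\,\}$ or, more cleanly, to observe that the function $y\mapsto n(y)$ is total on the cone above $T$, and to apply a $\TD$-style argument to it. The cleanest route: for each $n$ consider $A_n=\{y : y\in C_n\}$; then $\bigcup_n A_n$ contains the whole cone above $T$, hence ranges over Turing degrees cofinally, so for some fixed $n_0$ the set $A_{n_0}=C_{n_0}$ ranges Turing degrees cofinally. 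By $\sTD$ applied to $C_{n_0}$, there is a pointed set $Q\subseteq C_{n_0}$. A pointed set is in particular a perfect set, so $C_{n_0}$ has a perfect subset, and therefore $B_{n_0}=\pi(C_{n_0})$ has a perfect subset via the homeomorphism $\pi$.

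\emph{Where the difficulty lies.} The step that requires care is the passage from ``$\bigcup_n C_n$ ranges Turing degrees cofinally'' to ``some individual $C_{n_0}$ ranges Turing degrees cofinally.'' This is exactly the place where one wants a determinacy-type pigeonhole: a priori the witness index $n$ could depend on the chosen degree, and one must rule out that no single piece is cofinal. I expect to handle this by applying $\TD$ to the sets $D_n=\{\mathbf{y} : \exists y\in\mathbf{y}\ (y\geq_T T\ \wedge\ y\in C_n)\}$ of Turing degrees: each degree above $T$ lies in some $D_n$, and a union of countably many non-cofinal sets of degrees cannot cover a cone (each non-cofinal set is bounded away by a cone in its complement, and under $\TD$ with $\CCR$ a countable union of complemented cones still omits a cone, giving a contradiction). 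So the real content is this counting argument showing some $D_{n_0}$ contains a cone, after which $\sTD$ on the corresponding $C_{n_0}$ finishes. I would make sure the relativization to $T$ is handled uniformly so that $\sTD$, a lightface-relativized principle, applies to $C_{n_0}$ rather than to its $T$-relativized version.
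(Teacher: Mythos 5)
Your proof is correct and is essentially the paper's own argument: identify $P$ with $2^\omega$ via a homeomorphism, use the countable pigeonhole (via $\CCR$) to find a single piece that ranges Turing degrees cofinally, and apply $\sTD$ to it to get a pointed, hence perfect, subset, which the homeomorphism carries back into some $B_n$. The extra machinery you introduce is unnecessary but harmless: the degree-preservation of $\pi$ and the pointedness of $P$ are never actually used, and the pigeonhole step needs no appeal to $\TD$, since the negation of ``$C_n$ ranges Turing degrees cofinally'' directly yields a cone disjoint from $C_n$, after which $\CCR$ and a join of the cone bases give the contradiction.
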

\begin{proof}
Clearly we may assume that $P=2^\omega$ via a homeomorphism. Then for some $n$, $B_n$ ranges Turing degrees cofinally.  By sTD,  $B_n$ contains a perfect subset.
\end{proof}

\begin{proof}(of part (2) of Theorem \ref{theorem: conseuqnce of std}).
Suppose that $A$ is uncountable. By Lemma \ref{lemma: sami lemma}, we may fix  perfect set $P$ and a sequence of functions $\{f_n\}_{n\in \omega}$ as in the Lemma. Then by Lemma \ref{lemma 5.2}, we can choose a perfect $Q\subset f^{-1}_n[A]$ for some $n$. Now  $f_n[Q]$ is an uncountable analytic subset of $A$. So $f_n[Q]$ and hence $A$ contains a perfect subset.
\end{proof}

Here we mention another   approach, within  $\ZF+\TD+\DCR$,  to get a perfect subset due to Sami. A set $A$ of reals is called {\em Bernstein} if  neither $A$ nor $\mathbb{R}\setminus A$ has a perfect subset. Notice that the nonexistence of a Bernstein set implies that for every perfect set $P$ and its subset $A\subseteq P$, either $A$ or $P\setminus A$ has a perfect subset. Sami observed the following relationship between the existence of  a Bernstein set and perfect subset property.
\begin{lemma}[Sami] \label{lemma: no bernstein implies psps}
Assume $\ZF+\TD+\DCR$.  If there is no Bernstein set, then every uncountable set of reals has a perfect subset.
\end{lemma}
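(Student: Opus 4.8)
The plan is to reduce the statement to a \emph{covering} version of the perfect set property for a single perfect set, which I can extract from the nonexistence of a Bernstein set, and then to feed this through Sami's Lemma (Lemma \ref{lemma: sami lemma}) together with the classical perfect set property for analytic sets.

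First I would isolate the following combinatorial lemma, to be proved within $\ZF+\DCR$ from the no-Bernstein hypothesis: if $P$ is a perfect set and $P=\bigcup_{n\in\omega}B_n$, then some $B_n$ has a perfect subset. This is exactly the conclusion of Lemma \ref{lemma 5.2}, but now obtained without $\sTD$. Suppose toward a contradiction that no $B_n$ has a perfect subset. I would build a decreasing sequence of perfect sets $P=P_0\supseteq P_1\supseteq\cdots$ with $P_k\cap B_n=\emptyset$ for all $n<k$, as follows. Given $P_k$, the set $B_k\cap P_k\subseteq P_k$ has no perfect subset, so by the relative form of no-Bernstein noted before the statement (applied inside the perfect set $P_k$), the complement $P_k\setminus B_k$ contains a perfect set, which I take to be $P_{k+1}$. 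Since each step is a choice of a real coding a perfect set depending only on the previous one, $\DCR$ produces the whole sequence $\langle P_k\rangle_{k\in\omega}$. The perfect sets $P_k$ are closed and nested, and $\bigcap_k P_k=[\,\bigcap_k T_k\,]$ where $T_k$ is the pruned tree of $P_k$; since the level-$m$ nodes of the $T_k$ form a decreasing sequence of nonempty finite sets, $\bigcap_k T_k$ has nodes of every length, and its leftmost branch (available in $\ZF$ since $2^{<\omega}$ is well-ordered) is a point $x\in\bigcap_k P_k$. For each $n$ we have $x\in P_{n+1}$, hence $x\notin B_n$; thus $x\in P\setminus\bigcup_n B_n=\emptyset$, a contradiction.

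With the covering lemma in hand, I would finish as follows. Given an uncountable $A\subseteq\mathbb{R}$, apply Lemma \ref{lemma: sami lemma} (here $\TD$ is used) to obtain a perfect set $P$ and arithmetical functions $f_n\colon P\to\mathbb{R}$, each $1$-$1$ on $P$, with $P\subseteq\bigcup_n f_n^{-1}(A)$. Setting $B_n=f_n^{-1}(A)$, the covering lemma yields an $n$ and a perfect $Q\subseteq B_n$. Then $f_n[Q]\subseteq A$; since $f_n$ is $1$-$1$ and arithmetical on the compact perfect set $Q$, the image $f_n[Q]$ is an uncountable analytic set, so by the (ZF-provable) perfect set property for analytic sets it has a perfect subset, which lies in $A$.

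The main obstacle is the covering lemma, and specifically removing $\sTD$ from Lemma \ref{lemma 5.2}. The delicate point is guaranteeing that the nested intersection $\bigcap_k P_k$ is nonempty: rather than a fusion argument controlling the splitting structure, I would exploit compactness of $2^\omega$ in the explicit, choice-free form of the leftmost-branch construction, while using $\DCR$ only to assemble the sequence of refinements. One should also check the elementary but necessary facts that $f_n[Q]$ is genuinely analytic (a continuous/Borel image of a compact set) and uncountable, so that the classical analytic basis result applies.
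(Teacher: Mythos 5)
Your proof is correct, and it reaches the conclusion by a genuinely cleaner organization than the paper's, even though both arguments rest on the same pillars: Lemma \ref{lemma: sami lemma}, the relative form of the no-Bernstein hypothesis (the remark preceding the statement), $\DCR$ to iterate refinements, and the $\ZF$-provable perfect set property for uncountable analytic sets. The paper does not isolate your covering lemma; instead it runs an inductive case analysis directly on a tree $T^0$ with $[T^0]=P$: at stage $n$ either some perfect subtree is mapped by $f_n$ into $A$ (and then the analytic-image argument terminates the proof), or the relative no-Bernstein property is used to shrink below \emph{both} children of a splitting node so as to kill $f_n$, and at the end a fusion argument produces a perfect tree $S$ with $[S]\subseteq P$ on which every $f_n$ is undefined or lands outside $A$, contradicting $P\subseteq\bigcup_n f_n^{-1}(A)$. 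Your observation that fusion is overkill is correct: the contradiction only needs \emph{one} point of $P$ avoiding every $B_n$, so a plain nested sequence of perfect (compact) sets suffices, with nonemptiness of the intersection coming from the finite-level argument plus K\H{o}nig's lemma rather than from preserved splitting nodes. Your decomposition also has the aesthetic advantage that the second half of your proof is verbatim the paper's own derivation of part (2) of Theorem \ref{theorem: conseuqnce of std} from Lemma \ref{lemma 5.2}; in effect you prove Lemma \ref{lemma 5.2} with $\sTD$ replaced by ``no Bernstein set $+\DCR$,'' which is a statement of independent interest. Two details deserve explicit care in a final writeup, though neither is a gap: (i) $\bigcap_k T_k$ may have dead ends even when each $T_k$ is pruned, so ``leftmost branch'' must mean the leftmost branch through the \emph{extendible} nodes (the standard choice-free proof of K\H{o}nig's lemma for binary trees); (ii) the paper's formulation of $\DCR$ does not let you prescribe $f(0)$, so the relation you feed it should build the index $k$ and the requirement ``$Q\subseteq P$ and $Q\cap B_n=\emptyset$ for all $n<k$'' into the reals being chosen, ensuring the produced sequence really avoids every $B_n$ rather than only a tail of them.
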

\begin{proof}
Suppose that $A$ is uncountable. By Lemma \ref{lemma: sami lemma}, we may fix a perfect set $P$ and a sequence of functions $\{f_n\}_{n\in \omega}$ as in the Lemma.

\bigskip

Let $T^0\subseteq 2^{<\omega}$ be a perfect tree so that $[T^0]=P$. 

Case (1). There is some $\sigma\in T^0$ and some perfect tree $T^0_{\sigma}\subseteq [\sigma]\cap T^0$ so that $f_0$ is defined on every member in $[T^0_{\sigma}]$ and $f_0([T^0_\sigma])\subseteq A$. Fix such $\sigma$ and $T^0_{\sigma}$. Then $f_0([T^0_{\sigma}])\subseteq A$ is an uncountable analytic set. Thus $A$ must have a perfect subset. We are done.

Case (2).  Otherwise. Then by the assumption, for any $\sigma$ with $[\sigma]\cap [T^0]\neq \emptyset$, there is a perfect  tree $[T^0_{\sigma}]\subseteq [\sigma]\cap [T^0]$ so that for any $x\in [T^0_{\sigma}]$, either $f_0(x)$ is not defined or $f_0(x)\not\in A$. Fix some $\sigma_0\in T^0$ so that $[\sigma0]\cap [T^0]\neq \emptyset$ and $[\sigma1]\cap [T^0]\neq \emptyset$. Let $T^0_{\sigma0}$ and $T^0_{\sigma1}$ be as corresponded perfect tree as above. Set $$T^1=\{\tau\in T^0\mid \tau\prec\sigma\vee \tau\succeq \sigma0\vee \tau\succeq \sigma1\}\subseteq T^0.$$ Then for any $x\in [T^1]$,  either $f_0(x)$ is not defined or $f_0(x)\not\in A$.

\bigskip

Process the construction by induction on $n$. Either we stop at Case (1) of some $n$, then we find a perfect subset of $A$. Or else, the construction goes through all of $n$'s. Then by a usual fusion argument, we may find a perfect tree $S$ so that $[S]\subseteq P$ so that for any $x\in [S]$ and any $n$, either $f_n(x)$ is not defined or $f_n(x)\not\in A$. This contradicts to the fact that $[P]\subseteq \bigcup_{n\in \omega} f^{-1}_n(A)$.

Thus we must   stop at Case (1) of some $n$ and so $A$ must have a perfect subset.
\end{proof}

  $\sTD$ implies every set is measurable and so there is no Bernstein set. Thus $\ZF+\sTD+\DCR$ implies every uncountable set of reals has a perfect subset.

\subsection{An application of regular properties to linear orderings over $\mathbb{R}$}

\begin{lemma}\label{lemma: bi null}
Assume $\ZF+\CCR+\mbox{``every sets of reals is measurable"}$. For any   linear order $\leq_L$ over $\mathbb{R}$,  and $A\subseteq \mathbb{R}$ be any non-null set. The collection of the reals $x\in A$ so that either $\{y\in A\mid y\leq_L x\}$ or $\{y\in A\mid x\leq_L y\}$ is null is null.
 \end{lemma}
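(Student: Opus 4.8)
The plan is to reduce the claim to a one-sided statement and then derive a contradiction from a Fubini/Sierpi\'nski-type computation. Write $L_x=\{y\in A\mid y\leq_L x\}$ and $U_x=\{y\in A\mid x\leq_L y\}$, so that the set in question is $B_{\mathrm{low}}\cup B_{\mathrm{high}}$, where $B_{\mathrm{low}}=\{x\in A\mid L_x\text{ is null}\}$ and $B_{\mathrm{high}}=\{x\in A\mid U_x\text{ is null}\}$. Since $\geq_L$ is again a linear order on $\mathbb{R}$, and replacing $\leq_L$ by $\geq_L$ interchanges $L_x$ and $U_x$, the set $B_{\mathrm{high}}$ is exactly the ``$B_{\mathrm{low}}$'' of the reversed order. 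Hence it suffices to prove that $B_{\mathrm{low}}$ is null; applying this to both $\leq_L$ and $\geq_L$ and using finite additivity then shows $B_{\mathrm{low}}\cup B_{\mathrm{high}}$ is null. (The hypothesis that $A$ is non-null is in fact not needed: if $A$ is null the conclusion is immediate.)

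Next I would assume toward a contradiction that $B=B_{\mathrm{low}}$ is non-null; since every set is measurable, $B$ then has positive measure, and we may work in a finite measure space, say $A\subseteq 2^\omega$ with the fair-coin measure $\mu$. The point is that the two families of sections of a single planar set behave in opposite ways. Consider
\[
S=\{(x,y)\in B\times B\mid y\leq_L x\}.
\]
For fixed $x\in B$ the vertical section is $S^x=B\cap L_x\subseteq L_x$, which is null by the definition of $B$. For fixed $y\in B$, since $L_y$ is null and $A=L_y\cup U_y$, the set $U_y$ is co-null in $A$, so the horizontal section $S_y=B\cap U_y$ has full measure $\mu(B)$ in $B$.

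The key step is to run Fubini's theorem on $S$: every section is measurable, and $S$ itself is measurable because the hypothesis that every subset of $\mathbb{R}$ is measurable transfers to the plane via the measure-preserving homeomorphism $2^\omega\times 2^\omega\cong 2^\omega$ given by interleaving coordinates. Fubini then forces
\[
0=\int_B \mu(S^x)\,d\mu(x)=(\mu\times\mu)(S)=\int_B \mu(S_y)\,d\mu(y)=\mu(B)^2,
\]
contradicting $\mu(B)>0$. Thus $B_{\mathrm{low}}$ is null, which completes the argument.

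I expect the main obstacle to be the legitimacy of Fubini under the weak hypotheses $\ZF+\CCR+\text{``every set is measurable''}$: one cannot simply quote the classical theorem and must check that planar measurability follows from measurability on the line and that the Fubini--Tonelli equalities hold with only countable choice for sets of reals (which is also what guarantees that countable unions of null sets stay null). This is precisely where the measurability hypothesis does real work: the set $S$ constructed above is exactly a Sierpi\'nski-type set, whose nonexistence is equivalent to the relevant instance of Fubini, so the whole proof may be read as the observation that such sets cannot exist once all sets of reals are measurable.
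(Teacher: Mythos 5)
Your proposal is correct and takes essentially the same route as the paper: both arguments apply Fubini's theorem (legitimized by the hypothesis that every set, hence every planar set via a measure isomorphism, is measurable) to the set of $\leq_L$-related pairs inside the square of the allegedly non-null bad set, deriving a contradiction from the behavior of its sections. The only cosmetic difference is that you get the contradiction $0=(\mu\times\mu)(S)=\mu(B)^2$ directly by comparing vertical (null) and horizontal (full, by linearity of $\leq_L$) sections of one set, whereas the paper first establishes positivity of the measure of $\{(x,y)\in B^2\mid x\leq_L y\}$ and then sections it once, specializing $B$ to the bad set.
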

 \begin{proof}
 Given a  linear order $\leq_L$ over $\mathbb{R}$, Let $A\subseteq \mathbb{R}$ be any non-null set. Fix a non-null set $B\subseteq A$. By   Fubini's theorem, the set $\{(x,y)\mid x\leq_L y\wedge x\in B\wedge y\in B\}$ is measurable and has positive measure.  Let $$L^B=\{x\in B\mid \{y\in A\mid y\leq_L x\} \mbox{ is null}\}$$ be a subset of $B$.  Then by   Fubini's theorem again, the set $B\setminus L^B$ is not null. So the set $$L^A=\{x\in A\mid \{y\in A\mid y\leq_L x\} \mbox{ is null}\}$$ is a null subset of $A$.
 
By the same method, the set $$R^A=\{x\in A\mid \{y\in A\mid x\leq_L y\} \mbox{ is  null}\}$$ is also a null subset of $A$. 
  \end{proof}

 Finally we have the following basis theorem for linear orderings over $\mathbb{R}$ under  $\ZF+\sTD$.
\begin{theorem}\label{theorem: linear order of std}
Assume $\ZF+\DCR+\mbox{``every sets of reals is measurable"}$. For any linear oder $\leq_L$ over $\mathbb{R}$, there is an order preserving embedding from $(2^{\omega},\leq )$ to $(\mathbb{R},\leq_L)$.
\end{theorem}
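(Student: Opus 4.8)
The plan is to build a Cantor scheme of non-null \emph{compact} sets indexed by $2^{<\omega}$ whose branching is monotone for $\leq_L$, and then to read off the embedding along the branches. Identifying $(2^{\omega},\leq)$ with its lexicographic order, I would construct compact non-null sets $A_\sigma\subseteq\mathbb{R}$ for $\sigma\in 2^{<\omega}$ such that $A_\emptyset=[0,1]$; $A_{\sigma 0}$ and $A_{\sigma 1}$ are disjoint non-null compact subsets of $A_\sigma$; and there is a real $z_\sigma$ with $A_{\sigma 0}\subseteq\{y\mid y<_L z_\sigma\}$ and $A_{\sigma 1}\subseteq\{y\mid z_\sigma<_L y\}$, so that every member of $A_{\sigma 0}$ is $\leq_L$-below every member of $A_{\sigma 1}$.

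The single splitting step is exactly where Lemma \ref{lemma: bi null} enters. Given $A_\sigma$ non-null, the lemma says the set of $z\in A_\sigma$ for which $\{y\in A_\sigma\mid y<_L z\}$ or $\{y\in A_\sigma\mid z<_L y\}$ is null is itself null; hence I can pick a \emph{splitting real} $z_\sigma\in A_\sigma$ for which both one-sided pieces are non-null. Deleting the single point $z_\sigma$ and applying inner regularity of Lebesgue measure (available in $\ZF+\DCR$) to each piece, I obtain compact non-null $A_{\sigma 0}$ and $A_{\sigma 1}$ inside the $<_L$-left and $<_L$-right pieces respectively; they are automatically disjoint and $\leq_L$-separated by $z_\sigma$. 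Every object chosen at a node (the real $z_\sigma$ together with codes for the two compact sets) is a real, and each node's choice depends only on the already-constructed $A_\sigma$, so enumerating $2^{<\omega}$ in an order extending $\prec$ lets me run the whole recursion using $\DCR$.

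With the scheme in hand, for each branch $x\in 2^{\omega}$ the sets $A_{x\uh n}$ form a decreasing sequence of nonempty compact sets, so $\bigcap_n A_{x\uh n}\neq\emptyset$; I define $g(x)$ to be its $\leq$-least element, a canonical choice requiring no further appeal to choice once the scheme is fixed. To check that $g$ is order preserving, take $x<_{\mathrm{lex}} y$ with longest common initial segment $\sigma$, so $\sigma0\preceq x$ and $\sigma1\preceq y$; then $g(x)\in A_{\sigma0}$ and $g(y)\in A_{\sigma1}$, whence $g(x)<_L z_\sigma<_L g(y)$. In particular $g$ is injective, and it is the desired order preserving embedding of $(2^{\omega},\leq)$ into $(\mathbb{R},\leq_L)$.

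The main obstacle, and the point I would be most careful about, is producing genuine reals at the branches rather than merely abstract Dedekind cuts: since $\leq_L$ is an arbitrary linear order with no compatibility with the topology, a naive intersection of non-null measurable sets along a branch can be empty. Forcing each $A_\sigma$ to be compact (via inner regularity) repairs this, because nested nonempty compact sets have nonempty intersection. The secondary subtlety is to keep every choice at the level of reals or codes, so that only $\DCR$ rather than full choice is used, and to select $g(x)$ canonically so that the embedding itself is definable from the fixed scheme.
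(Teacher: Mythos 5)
Your proposal is correct and takes essentially the same approach as the paper: a Cantor scheme of non-null closed sets, split at each node using Lemma \ref{lemma: bi null} together with inner regularity, with the embedding read off along the branches. The only cosmetic difference is that the paper additionally shrinks diameters (so each branch intersection is a singleton), whereas you keep compactness and take the $\leq$-least element of the intersection; both choices work.
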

\begin{proof}
First we set $P_{\emptyset}=[0, 1]$.

By Lemma \ref{lemma: bi null}, there is a  real $x\in P_{\emptyset}$ so that both the sets  $\{y\in A\mid y\leq_L x\}$ and $\{y\in A\mid x\leq_L y\}$ have positive measure. So both of them have disjoint perfect subsets $P_0$ and $P_1$ with positive measure respectively. Moreover, we may require that for any $i\in \{0,1\}$ and $y,z\in P_{i}$, $|y-z|\leq 2^{-1}$.

Now by an induction, it is not difficult to construct a sequence $\{P_{\sigma}\}_{\sigma\in 2^{\omega}}$ of perfect sets so that
\begin{itemize}
\item If $\sigma\succ \tau$, then $P_{\sigma}\subset P_{\tau}$ has positive measure;
\item If $\sigma$ and $\tau$ are incompatible, then $P_{\sigma}\cap P_{\tau}=\emptyset$;
\item If $\sigma$ is in the left of $\tau$, then $\forall x\in P_{\sigma}\forall y\in P_{\tau}(x\leq_L y)$; 
\item For any $\sigma$ and $x,y\in P_{\sigma}$, $|x-y|\leq 2^{-|\sigma|}$.
\end{itemize}

Define $f:2^{\omega}\to \mathbb{R}$ so that $f(x)$ is the unique real in $\bigcap_n P_{x\uh n}$. Then $f$ is an order preserving embedding from $(2^{\omega},\leq)$ to $(\mathbb{R},\leq_L)$.

\end{proof}

One may wonder what happens to Lemma \ref{lemma: bi null} under $\ZF+\TD$. Since it is unknown whether $\ZF+\TD$ implies that every set of reals is measurable, we have to use a more involved argument.

 \begin{definition}
A linear order $(L,\leq_L)$ is {\em locally countable} if for any $l\in L$, the set $\{x\leq_L l\mid x\in L\}$ is countable.
\end{definition}
A typical locally countable order is $(\omega_1,\leq )$.

For any set $A$ of reals which are closed under Turing equivalence relation, a real $x$ is a {\em minimal upper bound} of $A$ if
\begin{itemize}
\item every member of $A$ is recursive in $x$; and
\item there is no real $y<_T x$ so that every member of $A$ is recursive in $y$.
\end{itemize}



 By a classical theorem in recursion theory (see Theorem 4.11 in \cite{Ler83}), for any countable set of reals $A$, there is always a minimal upper bound. 

\begin{lemma}\label{lemma: locally countable} Assume $\ZF+\TD$. There is no uncountable set $A\subseteq \mathbb{R}$ with a locally countable linear order over $A$. 
\end{lemma}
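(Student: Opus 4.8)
The plan is to argue by contradiction: suppose $A\subseteq\mathbb{R}$ is uncountable and $\leq_L$ is a locally countable linear order on $A$. First I would record the structural features that follow at once from $\ZF+\TD$ (hence $\CCR$, by \cref{theorem: td ccr}). For $x\in A$ write $A_x=\{y\in A:y\leq_L x\}$; by hypothesis each $A_x$ is countable, so (using $\CCR$) it admits an enumeration and thus a real $u_x$ with $A_x\subseteq\{y:y\leq_T u_x\}$. Since only countably many reals lie Turing-below a fixed real, and a countable union of countable sets is countable, $(A,\leq_L)$ has uncountable cofinality: every countable subset of $A$ has a strict $\leq_L$-upper bound in $A$. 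Combinatorially the order then looks like an uncountable, initial-segment-countable order such as $\omega_1$, and the whole difficulty is to see that $\mathbb{R}$ cannot actually carry such an order under $\TD$.

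Next I would bring in the cone (Martin) measure. Under $\ZF+\TD$ every set of Turing degrees or its complement contains an upper cone, and with $\CCR$ the cone filter $\mathcal{U}$ is a $\sigma$-complete ultrafilter on $\D$ (a countable intersection of cones contains the cone above the join of their countably many base degrees). For each $x\in A$ the set $G_x$ of degrees bounding $A_x$ contains the cone above $\deg(u_x)$, so $G_x\in\mathcal{U}$. Hence for every fixed $x$, $\mathcal{U}$-almost every degree \emph{captures} $x$ (computes all of $A_x$), whereas for a fixed degree $\mathbf d$ the captured set $J_{\mathbf d}=\{x\in A:A_x\subseteq\{y:y\leq_T\mathbf d\}\}$ is a countable proper $\leq_L$-initial segment; moreover each $x$ lies in $J_{\mathbf d}$ once $\mathbf d\geq_T\deg(u_x)$, so $\bigcup_{\mathbf d\geq_T\mathbf c}J_{\mathbf d}=A$ for every cone.

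The engine for the contradiction is \cref{lemma: doubjump} together with the reflection already isolated inside the proof of \cref{proposition: sami theorem}: because $A$ is uncountable there is a $z_0$ such that for every $\mathbf y\geq_T z_0$ some $r\in A$ satisfies $r\leq_T\mathbf y''$ and $r\not\leq_T\mathbf y$. The idea is to feed this ``bump'' property into the order $\leq_L$. Passing first, via \cref{lemma: sami lemma}, to a perfect set $P$ and arithmetical injections $\{f_n\}$ with $P\subseteq\bigcup_n f_n^{-1}(A)$, the map $g(x)=f_{n(x)}(x)$ (with $n(x)$ least) is countable-to-one from $P$ into $A$, so $g[P]\subseteq A$ is uncountable and $\leq_L$ pulls back to a linear preorder on $P$ whose initial segments $g^{-1}(A_a)$ are again countable. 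The plan is then to exploit the recursion-theoretic richness of $P=[T]$ — that its paths realize, under iterated jumps, a whole cone of degrees above $T$ — to drive an entire cone's worth of distinct $g[P]$-elements into a single $\leq_L$-initial segment, contradicting the countability of that segment.

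The step I expect to be the main obstacle is exactly this last reflection of the arbitrary order $\leq_L$ into Turing reducibility. Purely measure-theoretic information is provably insufficient: pushing $\mathcal{U}$ forward along a map with countable fibres into $\omega_1$ would only assert a measurable-$\omega_1$-type phenomenon, and $\sigma$-completeness alone cannot refute it — a measurable cardinal likewise carries a $\sigma$-complete measure all of whose initial segments are small. So the heart of the proof must use \cref{lemma: doubjump} to manufacture a genuine link between $\leq_L$ and $\leq_T$: one has to iterate the double-jump bump not merely $\omega$ times (which only yields countable $\leq_T$-increasing chains and no contradiction) but along the structure supplied by the perfect tree, and combine it with $\sigma$-completeness of $\mathcal{U}$ and the uncountable cofinality of $(A,\leq_L)$, so that some fixed $x^{\ast}\in A$ is forced to dominate uncountably many members of $A$ in $\leq_L$. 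Producing this domination uniformly, using no choice beyond $\CCR$, is where the real work lies.
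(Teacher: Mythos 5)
There is a genuine gap, and you in fact name it yourself: everything you establish is preparatory, and the step that would produce the contradiction is never carried out. The sound parts of your setup under $\ZF+\TD$ (hence $\CCR$) --- the bounding reals $u_x$ for the countable initial segments $A_x$, the uncountable cofinality of $(A,\leq_L)$, the $\sigma$-complete cone filter, the countable ``captured'' initial segments $J_{\mathbf d}$ --- are all correct, but they stop exactly where your own analysis says the difficulty begins. Your concluding plan (iterate the double-jump bump of \cref{lemma: doubjump} ``along the perfect tree'' from \cref{lemma: sami lemma} so that a cone's worth of elements of $g[P]$ is forced into one $\leq_L$-initial segment) is a hope, not an argument: no mechanism is given that converts $\leq_T$-information about paths of $[T]$ into $\leq_L$-comparability of their $g$-images, and your own measurable-cardinal remark shows that the tools you have assembled (cone measure plus $\sigma$-completeness plus countable-to-one maps) cannot suffice. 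So the proposal does not prove the lemma.

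The ingredient you are missing is the classical theory of \emph{minimal upper bounds}, which is what the paper uses to create the link between $\leq_L$ and $\leq_T$. The paper first reduces, via \cref{proposition: sami theorem}, to a locally countable order on all of $\mathbb{R}$, and replaces each initial segment by its Turing downward closure $I_x=\{s\mid \exists z\leq_L x\,(s\leq_T z)\}$; this is a countable set, so by Lerman's theorem it has a minimal upper bound, and such bounds exist above any prescribed real. By $\TD$ there is then a cone, with base $z_1$, of reals each of which is a minimal upper bound of some $I_x$; writing $M_z$ for the set of such $x$ and $N_z=\bigcup_{x\in M_z}I_x$, the \emph{linearity} of $\leq_L$ (together with monotonicity of $x\mapsto I_x$) makes any two sets $N_{z_2},N_{z_3}$ comparable under inclusion. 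The punch line uses \cref{lemma: doubjump} to pick $z_2\not\equiv_T z_3$ above $z_1$ forming an exact-pair-type configuration (anything recursive in both is recursive in $z_1$): assuming WLOG $N_{z_2}\subseteq N_{z_3}$ and taking $x\in M_{z_2}$, every member of $I_x$ is recursive in both $z_2$ and $z_3$, hence in $z_1$; but then $z_1$ is an upper bound of $I_x$ strictly Turing-below $z_2$, contradicting that $z_2$ is a \emph{minimal} upper bound of $I_x$. Note that this argument never needs the cone measure, the Sami reflection with double jumps, or the perfect-set machinery of \cref{lemma: sami lemma}; conversely, without minimal upper bounds and the exact-pair step, your outline has no lever with which to refute the order.
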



\begin{proof} By Proposition \ref{proposition: sami theorem},  it suffices to prove that there is no locally countable linear order on $\mathbb{R}$. 

 Suppose not. Let $(\mathbb{R},\leq_L)$ be a locally countable order. For any real $x$, let $I_x$ be the Turing downward closure of the set $\{z\mid z\leq_L x\}$. I.e. $$I_x=\{s\mid \exists z\leq_L x(s\leq_T z)\}.$$ Obviously $x\leq_L y$ implies $I_x\subseteq I_y$. 

Note for any real $z$, there is a real $x$ so that $z\in I_x$. So there is a real $z_0\geq_T z$ such that $z_0$ is a minimal upper bound of $I_x$. By $\TD$, there is a real $z_1$ so that every real $z_2\geq_T z_1$ is a minimal upper bound over $I_x$ for some $x$.



 For any real $z$, let $$M_z=\{x\mid     z\mbox{ is a minimal upper bound of }I_x)\}$$ and $$N_z=\bigcup_{x\in M_z}I_x.$$ 

Note that $M_{z_2}$ is nonempty for every $z_2\geq_T z_1$. We have the following fact:
\begin{itemize}

\item For any $z_2, z_3\geq_T z_1$, either $N_{z_3}\subseteq N_{z_2}$ or $N_{z_2}\subseteq N_{z_3}$. Suppose that  $N_{z_3}\not\subseteq N_{z_2}$. Then there must be some $x_3\in M_{z_3}$ so that for any $x_2\in M_{z_2}$, $x_3\not\leq_L x_2$. In other words, $x_2\leq_L x_3$ for any $x_2\in M_2$. So $N_{z_2}\subseteq N_{z_3}$.
\end{itemize}

Now fix a pair of minimal covers $z_2\not\equiv_T z_3$ of $z_1$ (i.e. for $i\in \{2,3\}$, $z_i>_T z_1$ but there is no real $y$ strictly between $z_1$ and $z_i$ in the Turing reduction order sense. For the existence of such a pair, see Lemma \ref{lemma: doubjump}). By the fact above, WLOG, we may assume $N_{z_2}\subseteq N_{z_3}$ and fix some $x\in M_{z_2}$. Then every real in $I_x\subseteq N_{z_2}\subseteq N_{z_3}$ is recursive in both $z_2$ and $z_3$. So every real in  $I_x$ is recursive in $z_1$. Contradicts to the fact that $z_2$ is a minimal upper bound of $I_x$ and $z_1<_T z_2$.
\end{proof}

\begin{corollary}\label{corollary: bi uncountable}
 Assume $\ZF+\TD$. For any uncountable set $A\subseteq \mathbb{R}$ and linear order $\leq_L$ over $A$, there are uncountably many reals $x\in A$ so that both $\{y\in A\mid y\leq_L x\}$ and $\{y\in A\mid x\leq_L y\}$ are uncountable.
 \end{corollary}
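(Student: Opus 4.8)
The plan is to reduce the statement to Lemma~\ref{lemma: locally countable} by isolating the ``one-sided countable'' elements. Given the uncountable set $A$ and the linear order $\leq_L$, I would introduce the two sets
$$L^A=\{x\in A\mid \{y\in A\mid y\leq_L x\}\text{ is countable}\}$$
and
$$R^A=\{x\in A\mid \{y\in A\mid x\leq_L y\}\text{ is countable}\},$$
consisting of the elements whose initial segment (resp. final segment) inside $A$ is countable. The set of $x$ we are after---those for which both $\{y\in A\mid y\leq_L x\}$ and $\{y\in A\mid x\leq_L y\}$ are uncountable---is exactly $A\setminus(L^A\cup R^A)$, so it suffices to show that both $L^A$ and $R^A$ are countable.

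The key observation is that $(L^A,\leq_L)$ is a locally countable linear order. Indeed, $L^A\subseteq A\subseteq\mathbb{R}$ and, for any $x\in L^A$, the set $\{y\in L^A\mid y\leq_L x\}$ is a subset of the countable set $\{y\in A\mid y\leq_L x\}$, hence countable. By Lemma~\ref{lemma: locally countable}, an uncountable subset of $\mathbb{R}$ cannot carry a locally countable linear order, so $L^A$ must be countable. Symmetrically, equipping $R^A$ with the reverse order $\geq_L$ yields a locally countable linear order on a subset of $\mathbb{R}$: for $x\in R^A$, the relevant initial segment $\{y\in R^A\mid x\leq_L y\}$ is contained in the countable set $\{y\in A\mid x\leq_L y\}$. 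Again by Lemma~\ref{lemma: locally countable}, $R^A$ is countable.

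Combining the two, $L^A\cup R^A$ is countable, and since $A$ is uncountable the complement $A\setminus(L^A\cup R^A)$ is uncountable. Every $x$ in this complement satisfies that both $\{y\in A\mid y\leq_L x\}$ and $\{y\in A\mid x\leq_L y\}$ are uncountable, which is the desired conclusion.

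I do not anticipate a serious obstacle, since the real content is supplied entirely by Lemma~\ref{lemma: locally countable}; the only point requiring care is verifying that local countability of the two restricted orders follows from the defining conditions on $L^A$ and $R^A$ (a subset of a countable set is countable, which is immediate and needs nothing beyond $\CCR$). The one mild conceptual step is recognizing that ``both sides uncountable'' is precisely the complement of the union of two one-sided conditions, each of which describes a locally countable order and is therefore barred from being uncountable under $\TD$.
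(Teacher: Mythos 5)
Your proof is correct and follows the paper's own argument: the paper defines the same two sets $L$ and $R$ of one-sided countable elements and invokes Lemma~\ref{lemma: locally countable} to conclude both are countable. You have merely made explicit the step the paper leaves implicit, namely that $L^A$ under $\leq_L$ (and $R^A$ under the reversed order) is a locally countable linear order on a subset of $\mathbb{R}$, so the lemma forces each to be countable.
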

 \begin{proof}
 Given a  linear order $\leq_L$ over $\mathbb{R}$. Let $$L=\{x\in A\mid \{y\in A\mid y\leq_L x\} \mbox{ is countable}\}$$ and $$R=\{x\in A\mid \{y\in A\mid x\leq_L y\} \mbox{ is countable}\}.$$
 
 By Lemma \ref{lemma: locally countable}, both $L$ and $R$ are countable. So there must uncountably many reals $x\in A$ so that both $\{y\mid y\leq_L x\}$ and $\{y\mid x\leq_L y\}$ are uncountable.
 \end{proof}
 
 Now we may obtain the following result.
 \begin{theorem}
 Assume $\ZF+\TD+\DCR$. The following are equivalent.
 \begin{enumerate}
 \item  Every uncountable set of reals has a perfect subset. 
 
 \item For any linear oder $\leq_L$ over $\mathbb{R}$, there is an order preserving embedding from $(2^{\omega},\leq )$ to $(\mathbb{R},\leq_L)$.
 \end{enumerate}
 \end{theorem}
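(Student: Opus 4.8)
The plan is to prove the two implications separately, reusing the machinery already developed in this section.

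For $(1)\Rightarrow(2)$, I would rerun the construction of \Cref{theorem: linear order of std}, replacing every appeal to measure by an appeal to cardinality together with the perfect set property. Fix a linear order $\leq_L$ on $\mathbb R$. I build a Cantor scheme $\{P_\sigma\}_{\sigma\in 2^{<\omega}}$ of perfect sets with $P_\emptyset=[0,1]$ such that $P_{\sigma i}\subseteq P_\sigma$, incompatible $\sigma,\tau$ give $P_\sigma\cap P_\tau=\emptyset$, every member of $P_{\sigma 0}$ is $\leq_L$ every member of $P_{\sigma 1}$, and $P_\sigma$ has Euclidean diameter at most $2^{-|\sigma|}$. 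At the successor step $P_\sigma$ is uncountable (being perfect), so Corollary~\ref{corollary: bi uncountable} yields a point $x_\sigma\in P_\sigma$ splitting $P_\sigma$ into the two $\leq_L$-sides $\{y\in P_\sigma: y<_L x_\sigma\}$ and $\{y\in P_\sigma: x_\sigma<_L y\}$, both uncountable; assumption $(1)$ then gives perfect subsets of each side, which I shrink to small Euclidean diameter to obtain $P_{\sigma 0}$ and $P_{\sigma 1}$. The recursion ranges over the countable tree $2^{<\omega}$ and is carried out using $\DCR$. Finally $f(x)$ is the unique point of $\bigcap_n P_{x\uh n}$ (a singleton by completeness of $\mathbb R$ and the diameter condition), and the left/right and disjointness clauses make $f$ a strictly $\leq_{\mathrm{lex}}$-to-$\leq_L$ increasing injection, exactly as before.

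For $(2)\Rightarrow(1)$, let $A$ be uncountable. The key point is that $\TD$ already proves $\mathrm{CH}$ (Proposition~\ref{proposition: sami theorem}), so $A$ has the same cardinality as $\mathbb R$ and there is a bijection $g\colon\mathbb R\to A$. Pulling back the usual order along $g$ gives a genuine linear order $\preceq$ on $\mathbb R$, defined by $u\preceq v\iff g(u)\leq g(v)$. Applying hypothesis $(2)$ to $\preceq$ produces an order embedding $h\colon(2^\omega,\leq_{\mathrm{lex}})\to(\mathbb R,\preceq)$, and then $\phi:=g\circ h\colon 2^\omega\to A$ is a strictly increasing map from $(2^\omega,\leq_{\mathrm{lex}})$ into $(\mathbb R,\leq)$ whose range lies inside $A$.

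It remains to extract a topological perfect subset from the merely order-theoretic object $\phi$, and this is the main obstacle. The idea is that a monotone map cannot be too wild: since each cylinder $[\sigma]$ is exactly the lexicographic interval $[\sigma 0^\infty,\sigma 1^\infty]$, monotonicity forces the oscillation of $\phi$ on $[\sigma]$ to be at most $\phi(\sigma 1^\infty)-\phi(\sigma 0^\infty)$. Consequently $\phi$ is continuous, in the usual topology of $2^\omega$, at every point except at countably many ``jumps'', because distinct jumps produce pairwise disjoint gap-intervals in $\mathbb R$. Let $D$ be this countable discontinuity set; then $2^\omega\setminus D$ still contains a perfect set $Q$, and $\phi\uh Q$ is a continuous injection on the compact set $Q$, hence a homeomorphism onto its image, so $\phi[Q]$ is a perfect set contained in $A$, as required. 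The delicate points to verify are that the lexicographic cylinders really are intervals (so that monotonicity bounds the oscillation), the countability of the jump set, and that a perfect set survives the removal of countably many points; none of these needs more than $\ZF+\DCR$, while $\TD$ enters only through $\mathrm{CH}$ in setting up the bijection $g$.
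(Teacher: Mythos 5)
Your proof is correct and takes essentially the same route as the paper's: for $(1)\Rightarrow(2)$ the paper likewise reruns the Cantor-scheme construction of Theorem~\ref{theorem: linear order of std} with ``positive measure'' replaced by ``uncountable'' (splitting via Corollary~\ref{corollary: bi uncountable} and taking perfect subsets from assumption (1)), and for $(2)\Rightarrow(1)$ it likewise invokes $\mathrm{CH}$ (Proposition~\ref{proposition: sami theorem}) to transfer the usual order on $A$ to a linear order on $\mathbb{R}$, applies (2), and extracts a perfect subset from the resulting monotone map by discarding its countably many discontinuity points. Your write-up only differs in spelling out the details (jump intervals, countability of the discontinuity set, compactness of the restricted domain) that the paper leaves implicit.
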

 \begin{proof}
 (1)$\Rightarrow$(2). The argument of Theorem \ref{theorem: linear order of std} works here. Just replace ``set with positive measure" by ``uncountable set".\medskip
 
 (2)$\Rightarrow$(1). Fix an uncountable set of reals $A$.  By Proposition \ref{proposition: sami theorem}, $|A|=|\mathbb{R}|$. So $(A, \leq)$ is order isomorphic to $(\mathbb{R}, \leq_L)$ for some $\leq_L$. By (2), there is an order preserving map from $(2^\omega, \leq)$ to $(\mathbb{R}, \leq_L)$ and hence $(A, \leq)$. 
 
 Fix $\pi: 2^\omega\to A$ that preserves order and so is monotonic. Then $\pi$ is continuous on all but countably many points. In particular, $\pi$ is continuous (and injective) on a perfect subset $P$. So $\pi[P]$ is a perfect subset of $A$.
 \end{proof}

\section{Regular property for dimension theory}\label{section: capacility}

For the notions and terminologies in fractal geometry, we follow the book \cite{Fal14}. 

Given a non-empty $U\subseteq \mathbb{R}$, the \emph{diameter} of $U$ is 
$$diam(U)=|U|=\sup\{|x-y| : x, y\in U\}.$$

Given any set $E\subseteq \mathbb{R}$ and $d\geq 0$, let 
$$\mathcal{H}^d(E)=\lim_{\delta\to 0}\inf\{\sum_{i<\omega}|U_i|^{d}: \{U_i\}\mbox{ is an open cover of } E \wedge \forall i\ |U_i|<\delta\},$$ 
\begin{multline*}\mathcal{P}_0^d(E)=\lim_{\delta\to 0}\sup\{\sum_{i<\omega} |B_i|^d: \{B_i\}\mbox{ is a collection of disjoint balls of radii at}\\ \mbox{ most }\delta  \mbox{ with centres in }E\}.\end{multline*}
and  $$\mathcal{P}^d(E)=\inf\{\sum_{i<\omega}\mathcal{P}^d_0(E_i)\mid E\subseteq \bigcup_{i<\omega} E_i\}.$$

\begin{definition}
\begin{itemize} Given any set $E$,
\item[(1)] the {\em Hausdorff dimension} of $E$, or $\DH(E)$, is $$\inf\{d\mid  \mathcal{H}^d(E)=0\};$$
\item[(2)]  the {\em Packing dimension} of $E$, or $\DP(E)$, is $$\inf\{d\mid  \mathcal{P}^d(E)=0\}.$$
\end{itemize}
\end{definition}

By the same reason as in Lebesgue measure, it can be proved with $\ZF+\CCR$ that for any Borel set $B$ and $\epsilon>0$, there is a closed set $F\subseteq B$ so that  $\DH(F)>\DH(B)-\epsilon$.

\begin{theorem}[Besicovitch \cite{Be52} and Davis \cite{Davies52}]\label{theorem: bm theorem}
For any analytic set $A$ and $\epsilon>0$, there is a closed set $F\subseteq A$ so that $\DH(F)\geq \DH(A)- \epsilon$.
\end{theorem}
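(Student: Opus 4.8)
The plan is to prove the theorem by the classical capacity-theoretic route, producing the required closed set as the support of a Frostman measure. Fix an analytic $A$ and $\epsilon>0$, and write $d=\DH(A)$; we may assume $d>0$ and set $s=d-\epsilon$ (if $s\le 0$ then any nonempty closed $F\subseteq A$ works, and such an $F$ exists since every nonempty analytic set contains a point, and a singleton is closed). By the very definition of Hausdorff dimension, $s<d$ forces $\mathcal{H}^s(A)=\infty$, in particular $\mathcal{H}^s(A)>0$; this is the only way in which the hypothesis $\DH(A)=d$ is used.

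The key step is Frostman's lemma for analytic sets: since $\mathcal{H}^s(A)>0$ and $A$ is analytic, there are a compact set $K\subseteq A$ and a Borel probability measure $\mu$ supported on $K$ with the Frostman bound $\mu(U)\le |U|^{s}$ for every set $U$. I would obtain this in two moves. First, the assignment $E\mapsto \sup\{\mu(E): \mu \text{ satisfies the Frostman bound for exponent } s\}$ defines a Choquet capacity, and $\mathcal{H}^s(A)>0$ guarantees that the capacity of $A$ is positive. Second, by Choquet's capacitability theorem analytic sets are capacitable, so this capacity is approximated from inside by compact subsets; hence some compact $K\subseteq A$ already carries positive capacity, and a standard weak-$\ast$ compactness argument on the normalized measures witnessing this produces a single probability measure $\mu$ on $K$ obeying the Frostman bound.

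With $K$ and $\mu$ in hand the conclusion is immediate from the mass distribution principle: if every set $U$ of diameter less than $\delta$ satisfies $\mu(U)\le |U|^{s}$, then for any cover $\{U_i\}$ of $K$ by such sets we get $1=\mu(K)\le \sum_i \mu(U_i)\le \sum_i |U_i|^{s}$, so $\mathcal{H}^{s}(K)\ge 1>0$ and therefore $\DH(K)\ge s=d-\epsilon$. Since $K$ is compact, hence closed, and $K\subseteq A$, the set $F=K$ is as required.

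The main obstacle is exactly the Frostman/capacitability step: it is the only nonelementary ingredient, and it is what separates analytic sets from arbitrary ones, the theorem being known to fail for some $\Pi^1_1$ set under $V=L$ as noted above. Everything else—reducing to the positivity of $\mathcal{H}^s$ and applying the mass distribution principle—is routine and needs only the countable manipulations available under $\CCR$. For completeness I note a parallel recursion-theoretic route in the spirit of the rest of the paper: via the Lutz--Lutz point-to-set principle $\DH(A)=\min_r \sup_{x\in A}\dim^r(x)$, one fixes an optimal oracle $r$, uses a $\Sigma^1_1(r)$ tree representation of $A$ as a projection, and extracts compact subsets of $A$ by dominating the witness coordinate; the crux there is to choose the dominating function so that the surviving compact set still realizes effective dimension close to $d$ against every oracle, which is once more precisely the content that fails outside the analytic case.
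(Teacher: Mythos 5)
Your overall plan (reduce to $\mathcal{H}^s(A)>0$ for $s=\DH(A)-\epsilon$, extract a compact $K\subseteq A$ supporting an $s$-Frostman measure, finish by the mass distribution principle) is the classical route, and the outer reductions and the final mass-distribution step are correct; note that the paper itself offers no proof of this theorem (it is cited from Besicovitch and Davies), so your argument has to stand on its own. It does not, because of one step: the assertion that ``$\mathcal{H}^s(A)>0$ guarantees that the capacity of $A$ is positive.'' For the set function you chose, $c(E)=\sup\{\mu(E):\mu\ \text{satisfies the $s$-Frostman bound}\}$, this implication is the entire content of Frostman's lemma for analytic sets, and nothing you invoke can supply it. Choquet capacitability cannot: it compares $c(A)$ with $\sup\{c(K):K\subseteq A\ \text{compact}\}$, so it is usable only \emph{after} one knows $c(A)>0$, which is what is to be proved. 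Frostman's lemma for compact sets cannot either: to apply it inside $A$ you would first need a compact subset of $A$ of positive $\mathcal{H}^s$-measure, which is exactly the theorem. Indeed, within your own framework the statement $c(A)>0$ is \emph{equivalent} to the conclusion (one direction is your capacitability-plus-weak-$\ast$ argument, the other is trivial), so asserting it begs the question. Moreover the implication is not a general property of $c$ that one could hope to verify abstractly: every $s$-Frostman measure is atomless, so any universally null set $E$ has $c(E)=0$; since there exist universally null sets of positive Hausdorff dimension (a theorem of Zindulka in ZFC, and classical under CH), the implication ``$\mathcal{H}^s(E)>0\Rightarrow c(E)>0$'' is simply false for arbitrary sets, and its truth for analytic sets uses analyticity in an essential, unproved way.

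The standard repair is to run Choquet's theorem on a capacity whose positivity on $A$ is elementary, namely the dyadic net content $\mathcal{M}^s_\infty$ (infimum of $\sum_i |Q_i|^s$ over covers of $E$ by dyadic intervals). The comparability $\mathcal{H}^s_\infty\le \mathcal{M}^s_\infty\le C_s\,\mathcal{H}^s_\infty$ holds for \emph{all} sets by an elementary covering argument, so $\mathcal{H}^s(A)>0$ immediately yields $\mathcal{M}^s_\infty(A)>0$; the genuinely hard part, which has no counterpart in your write-up, is Davies' increasing sets lemma showing that $\mathcal{M}^s_\infty$ is continuous from below on arbitrary increasing sequences and hence a Choquet capacity (this is exactly where the dyadic net structure is needed). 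Capacitability then produces a compact $K\subseteq A$ with $\mathcal{M}^s_\infty(K)>0$, hence $\mathcal{H}^s(K)>0$ and $\DH(K)\ge s$ directly, making your Frostman measure and weak-$\ast$ compactness steps unnecessary. (Those steps also have fixable secondary issues -- e.g.\ Frostman measures on $\mathbb{R}$ need not have bounded total mass, and continuity of $c$ from below on non-measurable increasing sequences needs the increasing-sets property of outer measures -- but the circular positivity claim is the essential gap.)
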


\begin{theorem}[Joyce and Preiss \cite{JP95}]\label{theorem: jp theorem}
For any analytic set $A$ and $\epsilon>0$, there is a closed set $F\subseteq A$ so that $\DP(F)\geq \DP(A)- \epsilon$.
\end{theorem}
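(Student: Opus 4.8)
The plan is to deduce the dimension statement from a subset-existence statement for packing measure and to prove the latter by a pruning argument on a tree representing the analytic set. First I would reduce to the following: for analytic $A$ and any $s<\DP(A)$, there is a \emph{compact} set $K\subseteq A$ with $\mathcal{P}^s(K)>0$, and hence $\DP(K)\geq s$. Granting this and letting $s\nearrow\DP(A)$ yields, for each $\epsilon>0$, a closed set $F=K\subseteq A$ with $\DP(F)\geq\DP(A)-\epsilon$, which is the theorem. If $\DP(A)=0$ there is nothing to prove, so we may assume $0<s<\DP(A)$, in which case $\mathcal{P}^s(A)>0$ by definition of packing dimension.

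The natural starting point is the modified box-counting description $\DP(E)=\inf\{\sup_i\overline{\dim}_B(E_i):E\subseteq\bigcup_i E_i\}$ (as in \cite{Fal14}), together with the fact that upper box dimension is invariant under closure, $\overline{\dim}_B(E)=\overline{\dim}_B(\overline{E})$. This invariance is exactly why closed sets are the right objects for packing dimension; the catch is that the closure of a piece $E_i$ of large box dimension need not lie inside $A$. To keep the witnessing set inside $A$ I would use analyticity: write $A=p[T]$ for a tree $T$ (a Souslin representation), and build $K$ as the projection of a finitely branching subtree $S\subseteq T$, selected by a transfinite derivative that discards nodes whose subtrees contribute too little packing mass at small scales. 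This is meant to parallel the capacitability proof of the Besicovitch--Davis theorem (Theorem \ref{theorem: bm theorem}), where the Hausdorff content $\mathcal{H}^s_\infty$ behaves like a capacity and analytic sets are capacitable, so the content is realized by compact subsets.

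The hard part is that no such capacitability is available for packing measure. The packing premeasure $\mathcal{P}_0^s$ is a supremum over packings and is not countably subadditive, and $\mathcal{P}^s$ is obtained from it by a further infimum over countable covers, so $\mathcal{P}^s$ is not a Choquet capacity and the Besicovitch--Davis argument does not transfer. The crux, and the point at which Joyce--Preiss's technical device enters, is to first \emph{regularize}: pass to a subset $E\subseteq A$ on which the premeasure is locally large, i.e. $\mathcal{P}_0^s(E\cap U)>0$ for every open $U$ meeting $E$, discarding only countably many pieces of small premeasure; and then to run the tree construction inside $E$ so that a packing witnessing $\mathcal{P}_0^s(E\cap U)>0$ survives at every scale along $S$, which forces $\mathcal{P}^s(K)>0$ for the full packing measure and not merely the premeasure. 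Maintaining the lower bound on $\mathcal{P}^s$ (rather than on $\mathcal{P}_0^s$) through the derivation is the only genuinely delicate step; the rest is bookkeeping on the Souslin scheme.

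Finally, I would note an alternative in the spirit of this paper's point-to-set methodology: using the characterization $\DP(E)=\min_X\sup_{x\in E}\mathrm{Dim}^X(x)$ by effective packing dimension relative to an oracle $X$, one fixes an optimal oracle for $A$, locates a branch of high effective packing dimension in $T$, and tries to grow a compact subtree all of whose branches keep high effective dimension relative to a jump of $X$. The obstruction here is the same one in disguise: lower-bounding $\DP(K)=\min_X(\cdots)$ requires controlling \emph{every} oracle $X$, which is precisely the adversarial direction that the regularization above is designed to tame.
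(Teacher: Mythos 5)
First, a point of order: the paper contains no proof of this statement. Theorem \ref{theorem: jp theorem} is quoted from Joyce and Preiss \cite{JP95} and used as a black box in the proof of Theorem \ref{theorem: cfj theorem under zfstd}, so your proposal can only be measured against the original Joyce--Preiss argument, not against anything in this paper. At the level of architecture your plan does match that argument: the reduction to ``for every $s<\DP(A)$ there is a compact $K\subseteq A$ with $\mathcal{P}^s(K)>0$, hence $\DP(K)\geq s$'' is correct (indeed $\mathcal{P}^s(A)=\infty$ for such $s$), and you correctly identify why Besicovitch--Davis does not transfer: $\mathcal{P}^s$ is an infimum over countable covers of a premeasure that is not countably subadditive, and it is not a Choquet capacity.

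The genuine gap is that the step you set aside as ``the only genuinely delicate step; the rest is bookkeeping'' is, in substance, the entire theorem, and your sketch gives no indication of how to carry it out. Local positivity of the premeasure on an analytic set $E$ does not pass to a compact subset by routine pruning of a Souslin scheme: the packings witnessing $\mathcal{P}_0^s(E\cap U)>0$ are centered at points of $E\cap U$, essentially none of which will lie on the compact set $K$ being built, so the construction must re-create, inside every relatively open portion of $K$, packings of \emph{uniformly} positive mass at all small scales. Uniformity is essential: the Baire-category argument that upgrades premeasure bounds to a bound on the measure only yields $\mathcal{P}^s(K)\geq\inf\{\mathcal{P}_0^s(K\cap U): U \mbox{ basic open meeting } K\}$, and nothing prevents this infimum from vanishing when each term is merely positive; securing a uniform lower bound through the transfinite construction is precisely the content of Joyce--Preiss's proof, not bookkeeping. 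There is also a smaller slip in the regularization: one should discard the basic open pieces $U$ with $\mathcal{P}^s(A\cap U)=0$ (the \emph{measure}, whose countable subadditivity guarantees that the remaining set is nonempty, carries full measure, and has locally positive premeasure); ``discarding countably many pieces of small premeasure'' is not a licensed operation, since the premeasure is only finitely subadditive. Your closing point-to-set variant does not repair any of this, as you note yourself: lower-bounding $\DP(K)$ requires controlling all oracles, which is the same adversarial quantifier in different clothing.
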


However Slaman proves that both Theorems \ref{theorem: bm theorem} and \ref{theorem: jp theorem} may fail even for some $\Pi^1_1$ set under certain assumptions.
\begin{theorem}[Slaman]
Suppose that the set of constructible reals is not null, then there is a $\Pi^1_1$ set $C$ with $\DH(C)=1$ but for any Borel   $F\subset C$, $\DP(F)=0$.
\end{theorem}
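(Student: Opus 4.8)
The plan is to produce a single \emph{thin} $\Pi^1_1$ set $C$ --- an uncountable $\Pi^1_1$ set with no perfect subset --- and to show that $\DH(C)=1$; the packing clause then comes for free. Indeed, if $C$ is thin and $F\subseteq C$ is Borel, then $F$ is analytic and contains no perfect set (a perfect subset of $F$ would be a perfect subset of $C$), so by the classical fact that every uncountable analytic set has a perfect subset, $F$ is countable. A countable set is a countable union of singletons, each of packing dimension $0$, and $\DP$ is countably stable, whence $\DP(F)=0$ for \emph{every} Borel $F\subseteq C$. Thus the whole difficulty is to manufacture a thin $\Pi^1_1$ set of Hausdorff dimension $1$, and this is the only place the hypothesis that $\mathbb{R}\cap L$ is non-null is used.

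For the Hausdorff dimension I would invoke the point-to-set principle
\[
\DH(C)=\min_{z\in 2^\omega}\ \sup_{x\in C}\dim^{z}(x),\qquad \dim^{z}(x)=\liminf_{n}\frac{K^{z}(x\uh n)}{n},
\]
so that it suffices to guarantee $\sup_{x\in C}\dim^{z}(x)=1$ for \emph{every} oracle $z$. The hypothesis furnishes the witnesses: since $\mathbb{R}\cap L$ is non-null, Fact~\ref{fact: schnorr random } relativised to $z$ (with Martin-L\"of randomness in place of $\Gamma$) produces, for each $z$, a $z$-Martin-L\"of random constructible real, and any such real has $\dim^{z}=1$. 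So relative to every oracle there are constructible reals of full effective dimension; the task is to herd enough of them into one coanalytic thin set.

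Thinness forces the target to lie inside the largest thin $\Pi^1_1$ set $C_1=\{x : x\in L_{\omega_1^{x}}\}$, so concretely I would try to realise the witnesses \emph{inside} $C_1$: given $z$ and $\epsilon>0$, spread the bits of a $z$-random constructible real at asymptotic density $1-\epsilon$ through the coordinates of a new real $x$ whose remaining coordinates are fixed by a canonical, constructibly simple pattern, arranged so that $x\in L_{\omega_1^{x}}$ while $K^{z}(x\uh n)\gtrsim(1-\epsilon)n$, giving $\dim^{z}(x)\ge 1-\epsilon$. Running this along the $\Sigma^1_2$ good wellordering of $\mathbb{R}\cap L$ and pinning each $x$ down by an injective $\Pi^1_1$ norm keeps the set coanalytic and its norm-fibres singletons, hence thin.

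The main obstacle is precisely this reconciliation: there is a genuine tension between being thin (which forces $C\subseteq C_1$, a null set all of whose members are ``simple'' in the constructibility hierarchy) and containing, relative to \emph{every} oracle $z$, reals that are nearly incompressible given $z$. The delicate points are (i) verifying that the density-$(1-\epsilon)$ spreading can be carried out while keeping $x\in L_{\omega_1^{x}}$, so that a genuinely $z$-complex real still lands in $C_1$, and (ii) certifying that the overall construction stays $\Pi^1_1$ rather than merely $\Delta^1_2$ (which a naive G\"odel-style diagonalisation against closed sets would give). It is here that the non-nullity of $\mathbb{R}\cap L$ must be used quantitatively --- to meet all $\aleph_1$ oracle demands simultaneously --- rather than merely to produce a single witness.
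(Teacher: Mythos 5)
The paper itself states this theorem without proof (it is quoted as Slaman's result), so there is no in-paper argument to compare against; I am judging your proposal on its own merits.

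Your reduction of the packing clause is correct and is the right move: if $C$ is a thin $\Pi^1_1$ set, every Borel $F\subseteq C$ is an analytic set with no perfect subset, hence countable, hence $\DP(F)=0$ by countable stability of packing dimension. The use of the Lutz--Lutz point-to-set principle, and the observation that non-nullity of $\mathbb{R}\cap L$ supplies a $z$-Martin-L\"of random constructible real for every oracle $z$, are also correct. The genuine gap is exactly the step you flag as delicate point (i), and it is not a technicality: as stated, your construction provably fails. If the sparse coordinates of $x$ carry a ``canonical, constructibly simple pattern'' $p$ (recursive, or even hyperarithmetic), then $x$ and the random real $y$ compute each other modulo $p$, so $\omega_1^x=\omega_1^y$ and $x$, $y$ appear at essentially the same level of $L$. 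But for sufficiently strong $z$ --- say $z$ computes a code of a null $G_\delta$ set covering $\{w\mid \omega_1^w>\CK\}$ (such a cover exists by Sacks's measure-theoretic uniformity) together with a code of $L_{\CK}$ --- every $z$-random real $y$ has $\omega_1^y=\CK$; and since $\mathbb{R}\cap L_{\CK}$ is exactly the set of hyperarithmetic reals, $x\in L_{\omega_1^x}$ would force $y$ to be hyperarithmetic, contradicting its randomness. So no dilution of a $z$-random real by a constructibly \emph{simple} pattern can land in $C_1$, for any density.

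The missing idea is that the sparse coordinates must carry a \emph{complicated} constructible real: a real $w$ coding a level $L_\gamma$ with $\gamma$ above the $L$-rank of $y$. Non-nullity of $\mathbb{R}\cap L$ forces $\omega_1^L=\omega_1$, and then (by the standard argument showing $C_1$ is uncountable) there are, cofinally in $\omega_1$, ordinals $\gamma$ and constructible reals $w$ coding $L_\gamma$ with $w\in L_{\omega_1^w}$ and $\omega_1^w>\gamma$. Diluting $y$ with such a $w$ gives a constructible $x$ appearing by some stage $\gamma+k$, while $\omega_1^x\geq \omega_1^w>\gamma+k$; hence $x\in L_{\omega_1^x}$, i.e.\ $x\in C_1$, and the density computation still yields $\liminf_{n\to\infty} K^z(x\uh n)/n\geq 1-\epsilon$ because $y$ is recovered from $x$ by a recursive coordinate projection. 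This repair also dissolves your delicate point (ii): there is no need to run along the $\Sigma^1_2$ wellordering or to pin points with injective norms (which, as you suspect, threatens to produce only a $\Sigma^1_2$ set). One simply takes $C=C_1=\{x\mid x\in L_{\omega_1^x}\}$, the largest thin $\Pi^1_1$ set of Guaspari--Kechris--Sacks: it is $\Pi^1_1$ and thin outright, and the corrected witnesses show $\DH(C_1)=1$ via the point-to-set principle.
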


We prove that both Theorems \ref{theorem: bm theorem} and \ref{theorem: jp theorem} remain true for any set of reals under $\ZF+\sTD$.
\begin{theorem}\label{theorem: cfj theorem under zfstd}
$\ZF+\sTD$ implies that for any set of reals $A$ and any $\epsilon>0$, 
\begin{itemize}
\item[(1)] there is a closed set $F\subseteq A$ so that $\DH(F)\geq \DH(A)-\epsilon$.
\item[(2)]there is a closed set $F\subseteq A$ so that $\DP(F)\geq \DP(A)-\epsilon$.
\end{itemize}
\end{theorem}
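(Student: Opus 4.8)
The plan is to run, for each of the two dimensions, the scheme used for measure in the proof of part~(1) of Theorem~\ref{theorem: conseuqnce of std}, with Schnorr randomness replaced by effective fractal dimension and the Terwijn--Zambella theorem replaced by its dimension analogue. Write $K^x$ for prefix-free Kolmogorov complexity relative to an oracle $x$, and set $\dim_H^x(y)=\liminf_n K^x(y\uh n)/n$ and $\dim_P^x(y)=\limsup_n K^x(y\uh n)/n$. I will use two facts. First, the point-to-set principle $\DH(A)=\min_x\sup_{y\in A}\dim_H^x(y)$ and $\DP(A)=\min_x\sup_{y\in A}\dim_P^x(y)$, which is provable in $\ZF+\CCR$ because the optimizing oracle need only encode one near-optimal cover (gale) for each rational level of precision. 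Second, oracle monotonicity: if $x\geq_T w$ then $\dim_H^x(y)\leq\dim_H^w(y)$ and $\dim_P^x(y)\leq\dim_P^w(y)$, since a stronger oracle can only shorten programs.

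I prove part~(1); part~(2) is word-for-word the same after replacing $\dim_H$ by $\dim_P$ and Besicovitch--Davis (Theorem~\ref{theorem: bm theorem}) by Joyce--Preiss (Theorem~\ref{theorem: jp theorem}). Put $d=\DH(A)$, fix $\epsilon\in(0,d)$ (if $d=0$ a single point suffices), and set $s=d-\epsilon/4$. By the point-to-set principle, for every oracle $z$ we have $\sup_{y\in A}\dim_H^z(y)\geq\DH(A)=d>s$, so some $y\in A$ has $\dim_H^z(y)>s$. For each index $e$ define
\[
  B_e=\{x:\Phi_e^{x''}\in A\ \text{and}\ \dim_H^{x}(\Phi_e^{x''})>s\}.
\]
Here membership demands high dimension relative to $x$, while $\Phi_e^{x''}$ is only computed from $x''$; this is consistent precisely because a real may be $x$-complex yet computable from $x''$. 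To see that $\bigcup_e B_e$ ranges Turing degrees cofinally, fix $z$, choose $y_0\in A$ with $\dim_H^z(y_0)>s$, and then invoke the dimension analogue of Theorem~\ref{theorem: low for schnorr random} to obtain $x\geq_T z$ with $x''\geq_T y_0$ and $\dim_H^{x}(y_0)>s$ --- an oracle whose double jump computes $y_0$ but which does not itself deflate the dimension of $y_0$ below $s$. Since $y_0\leq_T x''$, we have $y_0=\Phi_e^{x''}$ for some $e$, so $x\in B_e$ with $x\geq_T z$. Hence a single $B_{e_0}$ already ranges Turing degrees cofinally, and $\sTD$ yields a pointed perfect set $P\subseteq B_{e_0}$.

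Let $C=\{\Phi_{e_0}^{x''}:x\in P\}$, an analytic subset of $A$ (a projection of a Borel relation). To bound $\DH(C)$ from below I use that a pointed set ranges Turing degrees cofinally: coding $w$ into a path through the defining tree shows that for every oracle $w$ there is $x\in P$ with $x\geq_T w$. For such $x$ the point $y=\Phi_{e_0}^{x''}\in C$ satisfies $\dim_H^{x}(y)>s$ by the definition of $B_{e_0}$, whence oracle monotonicity gives $\dim_H^{w}(y)\geq\dim_H^{x}(y)>s$. Thus $\sup_{y\in C}\dim_H^{w}(y)\geq s$ for every $w$, and the point-to-set principle yields $\DH(C)\geq s=d-\epsilon/4$. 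Applying Besicovitch--Davis (Theorem~\ref{theorem: bm theorem}) to the analytic set $C$ then produces a closed $F\subseteq C\subseteq A$ with $\DH(F)\geq\DH(C)-\epsilon/4\geq d-\epsilon$.

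The main obstacle is exactly the ingredient used in the cofinality step: the dimension analogue of Terwijn--Zambella, asserting that a prescribed high-dimension point $y_0$ of $A$ can be pushed below the double jump of some oracle $x\geq_T z$ that does not lower the effective dimension of $y_0$ below $s$. This is a jump inversion that must simultaneously control $x''$ and preserve the effective dimension of a fixed real --- the precise role that ``low for Schnorr random'' played in the measure argument. Identifying the correct lowness notion for effective dimension, and verifying that the dimension of $y_0$ survives the passage from oracle $z$ to the oracle $x\geq_T z$, is where the genuine work lies; once it is in place the packing case follows verbatim through $\dim_P$ and Joyce--Preiss.
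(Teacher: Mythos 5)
Your proposal follows exactly the route of the paper's own proof: the point-to-set principle, a lowness-plus-jump-inversion step to show that some $B_{e_0}$ ranges over the Turing degrees cofinally, $\sTD$ to extract a pointed set $P\subseteq B_{e_0}$, the analytic image $C\subseteq A$, and Besicovitch--Davis (resp.\ Joyce--Preiss) applied to $C$. The one ingredient you flag as ``where the genuine work lies'' is not something to be invented: it is a known theorem, which the paper quotes as Theorem \ref{theorem: low for hd} (Herbert \cite{I18}; Lempp, Miller, Ng, Turetsky and Weber \cite{Madison14}): there is a perfect tree $T\leq_T\emptyset'$ such that every $w\in[T]$ is low for effective Hausdorff dimension, i.e.\ $\underline{\lim}_{n\to\infty} K^w(y\uh n)/n=\underline{\lim}_{n\to\infty} K(y\uh n)/n$ for all reals $y$, together with the $\overline{\lim}$ (packing) analogue. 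This yields precisely your ``dimension analogue of Terwijn--Zambella'': relativizing to $z$ gives a perfect tree $T\leq_T z'$ all of whose paths $w$ satisfy $\dim_H^{z\oplus w}=\dim_H^z$; since $T$ is perfect and recursive in $z'$, one can code the prescribed point $y_0$ into a path $w$ so that $z'\oplus w\geq_T T\oplus w\geq_T y_0$, and then $x=z\oplus w$ satisfies $x\geq_T z$ and $x'\geq_T z'\oplus w\geq_T y_0$ (a fortiori $x''\geq_T y_0$), while $\dim_H^{x}(y_0)=\dim_H^{z}(y_0)>s$. Note that the coding must go below the jump exactly as you set it up, since $y_0\leq_T x$ would force $\dim_H^x(y_0)=0$. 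With this citation your argument is complete and coincides with the paper's; the only cosmetic differences are that the paper works with the single jump ($\Phi_{e_0}^{y'}$) where you use the double jump, and that your verification of $\DH(C)\geq s$ via pointedness plus oracle monotonicity spells out a step the paper leaves implicit.
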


 To show the theorem,  we use  the ``point-to-set" method. 
 
 Some more facts from algorithmic randomness theory are needed. Let $K$ denote the prefix free Kolmogorov complexity.  We use $K^x$ to denote the prefix free Kolmogorov complexity with oracle, which is a real, $x$. The following  ``point to set" style theorem is  to Lutz and Lutz.
\begin{theorem}[Lutz and Lutz \cite{LL18}]\label{theorem: ll theorem}\footnote{A similar form was also discovered by Cutler. See Theorem 1.4 in \cite{Cutler95}.}  
For any set $A\subseteq \mathbb{R}$, $$\DH(A)=\inf_{x\in\mathbb{R}}  \sup_{y\in A} \underline\lim_{n\to \infty}\frac{K^x(y\uh n)}{n}$$ and $$\DP(A)=\inf_{x\in\mathbb{R}}  \sup_{y\in A} \overline\lim_{n\to \infty}\frac{K^x(y\uh n)}{n}.$$
\end{theorem}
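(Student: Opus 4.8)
The plan is to prove each equality by a matching pair of inequalities. Write $\dim^x(y)=\liminf_{n\to\infty}K^x(y\uh n)/n$ and $\operatorname{Dim}^x(y)=\limsup_{n\to\infty}K^x(y\uh n)/n$ for the effective Hausdorff and packing dimensions of the point $y$ relative to the oracle $x$, so that the claim reads $\DH(A)=\inf_x\sup_{y\in A}\dim^x(y)$ and $\DP(A)=\inf_x\sup_{y\in A}\operatorname{Dim}^x(y)$; I treat the Hausdorff case and then indicate the packing modifications. For the easy inequality $\DH(A)\le\inf_x\sup_{y\in A}\dim^x(y)$, fix any $x$, put $s=\sup_{y\in A}\dim^x(y)$, and fix rationals $s<d<d'$. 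Since each short $x$-program has a single output, $|\{\sigma\in 2^n:K^x(\sigma)<dn\}|\le 2^{dn+O(1)}$. Every $y\in A$ has $\dim^x(y)<d$, so $K^x(y\uh n)<dn$ for infinitely many $n$; hence for each threshold $N$ the set $C_N=\{\sigma:|\sigma|\ge N,\ K^x(\sigma)<d|\sigma|\}$ supplies a cover of $A$ by cylinders $[\sigma]$ of diameter $\le 2^{-N}$, and
$$\sum_{\sigma\in C_N}2^{-d'|\sigma|}\le 2^{O(1)}\sum_{n\ge N}2^{(d-d')n}\xrightarrow[N\to\infty]{}0,$$
so $\mathcal H^{d'}(A)=0$ and $\DH(A)\le d'$; letting $d'\downarrow s$ and then varying $x$ gives the inequality.

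The reverse inequality is the genuine point-to-set content: I must manufacture a single optimal oracle $x$ with $\sup_{y\in A}\dim^x(y)\le\DH(A)=:s$. Because $\mathcal H^{d}(A)=0$ for every rational $d>s$, for each such $d$ and each $k\in\omega$ I may, using $\CCR$ to make the countably many selections, fix a cover $\mathcal C_{d,k}=\{U^{d,k}_i\}_i$ of $A$ by cylinders with $\sum_i|U^{d,k}_i|^{d}<2^{-k}$ and all diameters $<2^{-k}$; let $x$ code the double sequence $\langle\mathcal C_{d,k}\rangle_{d,k}$. Relative to $x$ each $\mathcal C_{d,k}$ is computably enumerable, and since a length-$n$ cylinder contributes $2^{-nd}$ to a sum below $2^{-k}$, at most $2^{nd-k}$ cylinders of $\mathcal C_{d,k}$ have length $n$. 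Thus such a cylinder is named, relative to $x$, by its index among these, giving $K^x(\sigma)\le nd-k+O(\log n)$. For $y\in A$ and fixed $d>s$, choosing for each $k$ a cylinder of $\mathcal C_{d,k}$ containing $y$ (its length $n_k>k\to\infty$, as diameters are $<2^{-k}$) produces infinitely many $n$ with $K^x(y\uh n)/n\le d+o(1)$, so $\dim^x(y)\le d$; as this holds for all rational $d>s$, $\dim^x(y)\le s$, and so $\sup_{y\in A}\dim^x(y)\le s$.

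The packing case follows the same template with $\limsup$ replacing $\liminf$, but the combinatorics is now governed by the premeasure $\mathcal P^d_0$ and the decomposition appearing in $\mathcal P^d$. For $\le$, if $\operatorname{Dim}^x(y)\le s$ for all $y\in A$, fix $d'>d>s$ and decompose $A=\bigcup_m A_m$ with $A_m=\{y\in A:\forall n\ge m,\ K^x(y\uh n)<dn\}$; the bound $|\{\sigma\in 2^n:K^x(\sigma)<dn\}|\le 2^{dn+O(1)}$ now limits, at \emph{every} scale $2^{-n}$ with $n\ge m$, the number of disjoint $2^{-n}$-balls centred in $A_m$ to $O(2^{dn})$, whence $\mathcal P^{d}_0(A_m)<\infty$, so $\mathcal P^{d'}_0(A_m)=0$ and $\mathcal P^{d'}(A)\le\sum_m\mathcal P^{d'}_0(A_m)=0$, giving $\DP(A)\le s$. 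For $\ge$, I invoke $\mathcal P^{d}(A)=0$ to fix (again by $\CCR$) decompositions $A=\bigcup_i E^{d}_i$ with $\sum_i\mathcal P^{d}_0(E^{d}_i)$ finite, code them together with their scale-by-scale packing counts into one oracle $x$, and exploit that $\mathcal P^{d}_0(E^d_i)<\infty$ caps the number of length-$n$ cylinders meeting $E^d_i$ by $O(2^{dn})$ for all large $n$; naming $y\uh n$ by its index yields $K^x(y\uh n)\le dn+O(\log n)$ for all large $n$, hence $\operatorname{Dim}^x(y)\le d$ and then $\le s$.

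I expect the construction of the optimal oracle to be the main obstacle, and within it the genuinely delicate point is the packing direction: unlike the $\liminf$ bound, which needs only one good initial segment per resolution, the $\limsup$ bound must be forced at every sufficiently large length at once, so one must extract from the mere finiteness of $\mathcal P^d_0$ on each piece of a fixed decomposition a uniform, all-scales bound on cylinder counts, and then check that passing from the packing premeasure $\mathcal P^d_0$ to the packing measure $\mathcal P^d$ (the infimum over decompositions) is compatible with a single coding. A secondary point to verify is that every selection of covers and of decompositions above is legitimate under $\ZF+\CCR$ rather than full choice, which is exactly why only countably many choices—indexed by the rationals $d>s$ and by $k\in\omega$—are ever made.
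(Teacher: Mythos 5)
The paper offers no proof of this statement to compare against: it is imported verbatim from Lutz and Lutz \cite{LL18} (with the footnoted pointer to Cutler \cite{Cutler95}), and the ambient convention ``we will use $\CCR$ throughout'' silently covers the choice principle your last paragraph worries about. Judged on its own, your reconstruction is essentially the standard proof of the point-to-set principle and is correct in outline and in almost all details: the counting bound $|\{\sigma\in 2^n : K^x(\sigma)<dn\}|\le 2^{dn+O(1)}$ drives the easy inequalities, and the optimal oracle is built by coding, for each rational $d>\DH(A)$ and each $k$, a chosen cylinder cover witnessing $\mathcal H^d(A)=0$ (respectively, for packing, witnesses to $\mathcal P^d(A)=0$), with the cardinality bound per scale converting membership into a short index description. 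Your identification of the packing $\limsup$ case as the delicate one, and the extraction of an all-scales cylinder bound from $\mathcal P^d_0(E^d_i)<\infty$ via the monotonicity of the premeasure in $\delta$, is exactly right. Two caveats should be made explicit: you cannot literally ``code the decompositions'' $A=\bigcup_i E^d_i$ into an oracle, since these are arbitrary sets of reals; what you code is, for each $(d,i,n)$, the set of length-$n$ cylinders meeting $E^d_i$ (a countable object, so $\CCR$ suffices), which is also precisely what your index description consumes; and in $\mathbb{R}$ one should note that dyadic cylinder covers compute $\mathcal H^d$-nullness up to a constant factor, so restricting to cylinders is harmless.

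There is one genuinely wrong step, though it is locally repairable. In the easy packing inequality you assert that the bound of $O(2^{dn})$ disjoint $2^{-n}$-balls centred in $A_m$ (for $n\ge m$) yields $\mathcal P^{d}_0(A_m)<\infty$. It does not: a packing may mix radii across infinitely many scales, and at exponent $d$ each scale $n$ contributes up to $2^{dn+O(1)}\cdot 2^{-dn}=O(1)$, so the sum over scales can diverge; finiteness of the $d$-premeasure simply does not follow from the scale-wise count. Fortunately the intermediate claim is unnecessary: grouping an arbitrary packing of radii at most $2^{-N}$ by dyadic scale and computing directly at the exponent $d'>d$ gives
\begin{equation*}
\sum_{n\ge N} 2^{dn+O(1)}\cdot 2^{-d'n}\xrightarrow[N\to\infty]{}0,
\end{equation*}
so $\mathcal P^{d'}_0(A_m)=0$ outright, and your conclusion $\mathcal P^{d'}(A)\le\sum_m\mathcal P^{d'}_0(A_m)=0$, hence $\DP(A)\le s$, stands. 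With that step rerouted (and the coding caveat above spelled out), your argument is a complete and correct proof of the cited theorem.
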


The following lowness property is crucial to our proof.
\begin{theorem}[Herbert \cite{I18}; Lempp, Miller,  Ng, Turetsky,  Weber \cite{Madison14}]\label{theorem: low for hd}
\begin{itemize}
\item There is a perfect tree $T\subseteq 2^{<\omega}$ recursive in $\emptyset'$ so that for any real $x\in [T]$, $$\forall y\in \mathbb{R}(\underline{\lim}_{n\to \infty}\frac{K(y\uh n)}{n}=\underline{\lim}_{n\to \infty}\frac{K^x(y\uh n)}{n}).$$ 
\item  There is a perfect tree $T\subseteq 2^{<\omega}$ recursive in $\emptyset'$ so that for any real $x\in [T]$, $$\forall y\in \mathbb{R}(\overline{\lim}_{n\to \infty}\frac{K(y\uh n)}{n}=\overline{\lim}_{n\to \infty}\frac{K^x(y\uh n)}{n}).$$
\end{itemize}
\end{theorem}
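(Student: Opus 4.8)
The plan is to run the point-to-set argument of Section~\ref{section: regular} (the proof that $\ZF+\sTD$ makes every set measurable) with Schnorr randomness replaced by effective dimension: the Lutz--Lutz formula (Theorem~\ref{theorem: ll theorem}) will serve as the point-to-set bridge, and the lowness tree of Theorem~\ref{theorem: low for hd} will play the role that Terwijn--Zambella plays there. I treat part (1); part (2) is identical after replacing $\underline\lim$ by $\overline\lim$, the first bullet of Theorem~\ref{theorem: low for hd} by the second, and Theorem~\ref{theorem: bm theorem} by Theorem~\ref{theorem: jp theorem}. Fix $A$ and $\epsilon>0$, write $d=\DH(A)$, and fix a rational $q$ with $d-\epsilon<q<d$. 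For each index $e$ set
\[
B_e=\Big\{x : \Phi_e^{x''}\in A \ \text{ and } \ \underline\lim_{n\to\infty}\frac{K^{x}(\Phi_e^{x''}\uh n)}{n}>q\Big\}.
\]
The membership condition is a conjunction of an arithmetic-in-$x$ predicate with the opaque condition ``$\Phi_e^{x''}\in A$'', so each $B_e$ is a legitimate set.

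The first goal is to show that $\bigcup_e B_e$ ranges Turing degrees cofinally. Fix a real $z$. By Theorem~\ref{theorem: ll theorem} relativized to $z$ we have $\sup_{r\in A}\underline\lim_n K^{z}(r\uh n)/n\ge d>q$, so there is $r\in A$ with $\underline\lim_n K^{z}(r\uh n)/n>q$. Relativizing Theorem~\ref{theorem: low for hd} to $z$ gives a $z'$-recursive perfect tree $T_z$ all of whose paths are low for $z$-effective dimension; by a standard coding of a real into the double jump of a path through a perfect tree I can pick $x\in[T_z]$ with $r\le_T x''$. Put $\hat x=z\oplus x$. Then $\hat x\ge_T z$; moreover $\hat x$ is still low for $z$-effective dimension, since $x$ is low for $z$-dimension if and only if $z\oplus x$ is; and $\hat x''\ge_T x''\ge_T r$, so $r=\Phi_e^{\hat x''}$ for some $e$. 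Because $\hat x\ge_T z$ and $\hat x$ is low for $z$-dimension, $\underline\lim_n K^{\hat x}(r\uh n)/n=\underline\lim_n K^{z}(r\uh n)/n>q$; hence $\hat x\in B_e$ with $\hat x\ge_T z$. As $z$ was arbitrary, $\bigcup_e B_e$ is cofinal, and by $\CCR$ (choose, for each $e$ that is not cofinal, a bound on its degrees and join them) some single $B_{e_0}$ is cofinal.

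Now apply $\sTD$ to $B_{e_0}$ to obtain a pointed set $P\subseteq B_{e_0}$, and let $C=\{\Phi_{e_0}^{x''} : x\in P\}\subseteq A$. Since $P$ is closed and $x\mapsto\Phi_{e_0}^{x''}$ has an arithmetic graph, $C$ is analytic. I claim $\DH(C)\ge q$: by Theorem~\ref{theorem: ll theorem} it suffices that for every oracle $w$ there be some $r\in C$ with $\underline\lim_n K^{w}(r\uh n)/n>q$. As $P$ is pointed it contains some $x\ge_T w$, and then $r=\Phi_{e_0}^{x''}\in C$ satisfies $\underline\lim_n K^{x}(r\uh n)/n>q$ by the definition of $B_{e_0}$, whence $\underline\lim_n K^{w}(r\uh n)/n\ge\underline\lim_n K^{x}(r\uh n)/n>q$, using that $w\le_T x$ forces $K^{w}\ge K^{x}-O(1)$. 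Thus $C\subseteq A$ is analytic with $\DH(C)\ge q>d-\epsilon$, so the classical Besicovitch--Davis theorem (Theorem~\ref{theorem: bm theorem}), valid for analytic sets already in $\ZF+\CCR$, produces a closed $F\subseteq C\subseteq A$ with $\DH(F)\ge\DH(C)-\epsilon'$; choosing $q$ and $\epsilon'$ appropriately makes $\DH(F)\ge d-\epsilon$.

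The main obstacle is the cofinality step, that is, the construction of the witness $\hat x$. One must simultaneously extract a high-dimensional member $r$ of the \emph{opaque} set $A$ (accessible only through Theorem~\ref{theorem: ll theorem}) and code it into the double jump of a witness that stays low for $z$-effective dimension and Turing-above $z$ — exactly the job done by Terwijn--Zambella in the measure argument. Here it has to be assembled from the relativized lowness tree of Theorem~\ref{theorem: low for hd} together with double-jump inversion along a perfect tree, and the delicate point is verifying that the coded path remains a branch of $T_z$ (hence low for dimension) while its double jump computes $r$. Everything downstream — the passage to a single $B_{e_0}$, the application of $\sTD$, and the descent from the analytic set $C$ to a closed subset — is routine given the template of Section~\ref{section: regular}.
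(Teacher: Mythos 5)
Your proposal does not address the statement it is supposed to prove. The statement is Theorem \ref{theorem: low for hd} itself: the existence of a perfect tree $T\subseteq 2^{<\omega}$ recursive in $\emptyset'$ every path of which is low for effective Hausdorff (respectively packing) dimension. What you have written is instead a proof of the downstream application, Theorem \ref{theorem: cfj theorem under zfstd}, and in your key ``cofinality step'' you explicitly invoke the statement in question (``relativizing Theorem \ref{theorem: low for hd} to $z$ gives a $z'$-recursive perfect tree $T_z$ all of whose paths are low for $z$-effective dimension''). As an argument for Theorem \ref{theorem: low for hd} this is circular: the lowness tree is assumed, never constructed. Nothing in the proposal engages with what the theorem actually demands, namely an explicit construction, recursive in $\emptyset'$, of a perfect tree together with a verification that every path $x$ satisfies $\underline{\lim}_{n\to\infty}K(y\uh n)/n=\underline{\lim}_{n\to\infty}K^x(y\uh n)/n$ for \emph{all} reals $y$ simultaneously; this requires controlling the extra compression power that $x$ contributes as an oracle, along the lines of Herbert's fusion construction of a perfect set of reals with finite self-information \cite{I18}, or the analysis of lowness for effective dimension by Lempp, Miller, Ng, Turetsky and Weber \cite{Madison14}. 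Note that the paper itself offers no internal proof of this theorem---it is imported from those two papers---so the honest options were to reproduce one of their constructions or to cite them, not to derive a different theorem \emph{from} the statement.

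For what it is worth, read as a proof of Theorem \ref{theorem: cfj theorem under zfstd} your outline essentially reproduces the paper's own argument: define the sets $B_e$, show $\bigcup_e B_e$ ranges Turing degrees cofinally using Theorems \ref{theorem: ll theorem} and \ref{theorem: low for hd}, pass to a single cofinal $B_{e_0}$, apply $\sTD$ to obtain a pointed $P$, observe that the image set $C$ is an analytic subset of $A$ with $\DH(C)>\DH(A)-\epsilon/2$ by pointedness together with Theorem \ref{theorem: ll theorem}, and finish with Besicovitch--Davis (Theorem \ref{theorem: bm theorem}). The one deviation is that you code the high-dimension witness into the double jump $x''$, mirroring the Schnorr-randomness measure argument, whereas the paper uses the single jump $\Phi_{e_0}^{y'}$, exploiting that the tree of Theorem \ref{theorem: low for hd} is recursive in $\emptyset'$ so that jump inversion along the tree already yields $y'>_T x$; your double-jump variant would work but is needlessly lossy. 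None of this, however, constitutes any progress on the lowness theorem you were actually asked to prove.
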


 
 Now we are ready to prove our major theorem of this section.
 \begin{proof}(of Theorem \ref{theorem: cfj theorem under zfstd})
 
 (1). Suppose that $A\subseteq \mathbb{R}$ with $\DH(A)>0$. Fix any $\epsilon>0$. By Theorem \ref{theorem: ll theorem}, for any real $z$, there is some real $x\in A$ so that $$\underline{\lim}_{n\to \infty}\frac{K^z(x\uh n)}{n}>\DH(A)-\frac{\epsilon}{2}.$$ 
 By Theorem \ref{theorem: low for hd} relative to $z$, there is a real $y>_T z$ so that  $$\underline{\lim}_{n\to \infty}\frac{K^y(x\uh n)}{n}=\underline{\lim}_{n\to \infty}\frac{K^z(x\uh n)}{n}>\DH(A)-\frac{\epsilon}{2} \wedge y'>_T x.$$ 
 
 So there must be some $e_0$ so that the set $$B_{e_0}=\{y\mid \Phi_{e_0}^{y'}\in A\wedge \underline{\lim}_{n\to \infty}\frac{K^y(\Phi_{e_0}^{y'}\uh n)}{n}>\DH(A)-\frac{\epsilon}{2}\}$$ ranges Turing degrees cofinally. By $\sTD$, there is a pointed set $P\subseteq B_{e_0}$.
 
 Then the set $$C=\{x\mid \exists y\in P(\Phi_{e_0}^{y'}=x)\}$$ is an analytic subset of $A$. By  Theorem \ref{theorem: ll theorem}, $$\DH(C)>\DH(A)-\frac{\epsilon}{2}.$$ By Theorem \ref{theorem: bm theorem},  $C$ has a closed subset $F$ so that $$\DH(F)>\DH(C)-\frac{\epsilon}{2}.$$ Thus  $$\DH(F)>\DH(A)-\epsilon.$$ 
 
 (2). Same proof as (1) except replacing Hausdorff dimension with packing dimension. We leave the details to readers.
 \end{proof}
 
 To continue our study, we need the following folklore technique lemma of which we sketch a   proof for the completeness.
 \begin{lemma}[Folklore]\label{lemma: bounding}
 Suppose $\ZF+\sTD$. If $f:\mathbb{R}\to Ord$ is a degree invariant (i.e. $x\equiv_T y\implies f(x)=f(y)$ ) map so that $f(x)<\omega_1^x$, then there is an ordinal $\alpha$ so that $f(x)=\alpha$ over an upper cone of Turing degrees.
 \end{lemma}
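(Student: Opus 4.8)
The plan is to analyze $f$ through the Martin (cone) measure coming from $\TD$, combined with $\Sigma^1_1$-boundedness and the pointed sets supplied by $\sTD$. For each ordinal $\beta$ the set $A_\beta=\{x:f(x)\ge\beta\}$ is degree invariant, so by $\TD$ exactly one of $A_\beta$ and $\{x:f(x)<\beta\}$ contains an upper cone (not both, since two cones intersect in a cone via the join). Because $f(x)<\omega_1^x<\omega_1$ for every $x$, the value $\omega_1$ already has $\{x:f(x)<\omega_1\}$ equal to all reals, so I can let $\alpha$ be the \emph{least} ordinal for which $\{x:f(x)<\alpha\}$ contains a cone. Then $\{x:f(x)\ge\beta\}$ contains a cone for every $\beta<\alpha$, while $\{x:f(x)<\alpha\}$ contains a cone. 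If $\alpha=\gamma+1$ is a successor, then $\{x:f(x)\ge\gamma\}$ and $\{x:f(x)\le\gamma\}$ both contain cones, and intersecting the two cones yields one on which $f\equiv\gamma$; this $\gamma$ is the desired ordinal. So the entire task is to rule out $\alpha$ being a limit.

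First I would dispose of the case $\mathrm{cf}(\alpha)=\omega$. Choosing $\beta_n\uparrow\alpha$ and, using $\CCR$ (available by Theorem \ref{theorem: td ccr}), base reals $d_n$ with $f\ge\beta_n$ on the cone $\{x\ge_T d_n\}$, the join $d=\bigoplus_n d_n$ gives a cone $\{x\ge_T d\}$ on which $f(x)\ge\beta_n$ for all $n$, hence $f(x)\ge\alpha$; intersecting with the cone where $f<\alpha$ is a contradiction.

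The hard case, and the only place where $\sTD$ (rather than just $\TD$) and the hypothesis $f(x)<\omega_1^x$ are genuinely used, is $\mathrm{cf}(\alpha)>\omega$, so $\alpha\ge\omega_1$. The strategy is to freeze a single program computing $f$ and then apply boundedness. Since $f(x)<\omega_1^x$, for each $x$ there is a least index $g(x)$ with $\Phi_{g(x)}^x$ a well-order of $\omega$ of order type $f(x)$. The sets $D_e=\{x:g(x)=e\}$ are \emph{not} degree invariant, so $\TD$ cannot be applied to them directly; this is the main obstacle. I would get around it via $\sTD$: fix a cone $\{x\ge_T d\}$ on which $f<\alpha$; as it is covered by the countably many $D_e$, a $\CCR$/join argument (if each $D_e\cap\{x\ge_T d\}$ avoided a cone, the join of those cone bases would contradict cofinality of the union) shows some $D_{e_0}\cap\{x\ge_T d\}$ still ranges cofinally in the Turing degrees. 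Then $\sTD$ delivers a pointed perfect set $P=[T]\subseteq D_{e_0}\cap\{x\ge_T d\}$ with $T\le_T x$ for all $x\in P$. On $P$ the map $x\mapsto\Phi_{e_0}^x$ is a uniform assignment of well-order codes with $|\Phi_{e_0}^x|=f(x)$, and its range $\{\Phi_{e_0}^x:x\in[T]\}$ is a $\Sigma^1_1(T)$ subset of the codes for well-orders. By $\Sigma^1_1$-boundedness relative to $T$, $\gamma^*:=\sup_{x\in P}f(x)<\omega_1^T<\omega_1\le\alpha$.

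To finish I would use that a pointed perfect set meets every cone: coding an arbitrary real $z$ into the successive splittings of $T$ produces a branch $x\in[T]$ with $x\equiv_T T\oplus z\ge_T z$ (recovering $z$ from $x$ uses $T\le_T x$). Since $\gamma^*+1<\alpha$, the set $\{x:f(x)\ge\gamma^*+1\}$ contains a cone, which $P$ must meet; any witness $x\in P$ then satisfies both $f(x)\ge\gamma^*+1$ and $f(x)\le\gamma^*$, a contradiction. Hence $\alpha$ is not a limit; it is a successor $\gamma+1$, and $f\equiv\gamma$ on a cone. The step needing the most care is precisely the failure of degree-invariance of the $D_e$: one must check that the cofinal-covering/join argument legitimately yields a cofinal piece to which $\sTD$ applies, and that the resulting $\Sigma^1_1(T)$ image consists of well-order codes for \emph{every} branch of $T$, not merely almost every branch.
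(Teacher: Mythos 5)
Your proof is correct, and it shares the crucial mechanism with the paper's proof while packaging it in a genuinely different architecture. The paper argues directly: since $f(x)<\omega_1^x$, the countably many sets $D_e=\{x\mid \Phi_e^x \mbox{ codes a well-order of type } f(x)\}$ cover the reals, so by the same $\CCR$/join pigeonhole you use, some $D_{e_0}$ ranges cofinally; $\sTD$ gives a pointed $P=[T]\subseteq D_{e_0}$; $\Sigma^1_1(T)$-boundedness bounds $f$ on $P$ strictly below $\omega_1^T$. Up to this point your uncountable-cofinality case is essentially the paper's first half. The proofs then diverge: the paper pigeonholes again over the countably many ordinals $\alpha<\omega_1^T$, applies $\sTD$ a \emph{second} time to get a pointed $Q\subseteq P$ on which $f\equiv\alpha$, and finishes by degree invariance (every degree above the tree of $Q$ is realized in $Q$, so $f\equiv\alpha$ on that cone). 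You instead set up a least-counterexample analysis of the Martin measure -- $\alpha$ the least ordinal with $f<\alpha$ on a cone -- and split on the cofinality of $\alpha$: the successor case and the cofinality-$\omega$ case need only $\TD$ plus $\CCR$ (Theorem \ref{theorem: td ccr}), and $\sTD$ enters exactly once, to rule out uncountable cofinality via the pointed-set/boundedness contradiction. Both arguments are complete (your auxiliary claims check out: the join argument for a cofinal $D_{e_0}$, the fact that a pointed set realizes every degree above its tree, and that $\gamma^*+1<\alpha$ since $\gamma^*<\omega_1\leq\alpha$ and $\alpha$ is a limit). What each buys: the paper's route is shorter and avoids any case analysis on ordinals; yours isolates precisely where strong Turing determinacy is needed, showing that modulo $\TD+\CCR$ the whole content of the lemma is the single statement that $f$ is bounded below $\omega_1$ on a cone.
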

 \begin{proof}
Fix such a map $f$. Since there are countably many recursive functionals, by $\sTD$, there is some recursive functional $\Phi$ so that $\Phi^x$ codes a linear order for every real $x$; and a pointed set $P$ so that $f(x)\cong \Phi^x$ for any $x\in P$. Let $T$ be a tree representing $P$ so that $\forall x\in P(T\leq_T x)$. Then the set $$\{\Phi^x\mid x\in P\}$$ is a $\Sigma^1_1(T)$ set and so $\Phi^x$ represents an ordinal smaller than $\omega_1^T$ for any $x\in P$ by $\Sigma^1_1$-boundedness relative to $T$ (see \cite{CY15book}). By $\sTD$ again, there must be some $\alpha<\omega_1^T$ and a pointed set $Q\subseteq P$ so that $f(x)=\alpha$ for any $x\in Q$. This finishes the proof.
 \end{proof}
 
 Crone,  Fishman and Jackson also proved the following result.
 \begin{theorem}[Crone,  Fishman and Jackson \cite{crone2020hausdorff}]\label{theorem: cfj theorem}
Assume $\ZF+\AD+\mathrm{DC}$. If $A=\bigcup_{\alpha<\kappa}A_{\alpha}$ for some ordinal $\kappa$, then   $\DH(A)=\sup\{\DH( A_{\alpha})\mid \alpha<\kappa\}$.
\end{theorem}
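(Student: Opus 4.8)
The plan is to prove the equality by induction on $\kappa$, treating $\kappa$ as a cardinal and isolating a least counterexample. The inequality $\DH(A)\ge\sup\{\DH(A_\alpha)\mid\alpha<\kappa\}$ is immediate from monotonicity of Hausdorff dimension, so only $\DH(A)\le d$ needs work, where $d:=\sup_{\alpha<\kappa}\DH(A_\alpha)$. Since $\AD$ (with $\mathrm{DC}$) yields $\sTD$, all of the $\sTD$-consequences proved above are available; in particular I will use the approximation theorem \ref{theorem: cfj theorem under zfstd} and the perfect set property \ref{theorem: conseuqnce of std}. Suppose the conclusion fails, and let $\kappa$ be the least cardinal witnessing a failure. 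The base case $\kappa\le\omega$ is the classical countable stability of Hausdorff dimension (provable from $\ZF+\CCR$ by countable subadditivity of $\mathcal{H}^{d}$), so $\kappa$ is uncountable. A standard cofinality reduction shows $\kappa$ is regular: if $\kappa$ were singular, writing $\kappa=\sup_{\xi<\mathrm{cf}(\kappa)}\gamma_\xi$ and $B^{(\xi)}=\bigcup_{\alpha<\gamma_\xi}A_\alpha$, minimality of $\kappa$ applied first to each $B^{(\xi)}$ (a union of $<\kappa$ pieces) and then to $A=\bigcup_{\xi<\mathrm{cf}(\kappa)}B^{(\xi)}$ forces $\DH(A)\le d$, a contradiction. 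So I may assume $\kappa$ is regular and uncountable.

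Next I would set up the core construction. Fix $s'$ with $d<s'<\DH(A)$. By Theorem \ref{theorem: cfj theorem under zfstd}(1), applied with $\epsilon<\DH(A)-s'$, there is a closed set $F\subseteq A$ with $\DH(F)>s'$, hence $\mathcal{H}^{s'}(F)>0$. For $\xi<\kappa$ put $B_\xi=\bigcup_{\alpha<\xi}A_\alpha$; this is an increasing, limit-continuous filtration with $A=\bigcup_{\xi<\kappa}B_\xi$. Since each $B_\xi$ is a union of $|\xi|<\kappa$ of the $A_\alpha$, the minimality of $\kappa$ gives $\DH(B_\xi)=\sup_{\alpha<\xi}\DH(A_\alpha)\le d<s'$, so $\mathcal{H}^{s'}(B_\xi)=0$ and therefore $\mathcal{H}^{s'}(F\cap B_\xi)=0$ for every $\xi$. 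Thus $F=\bigcup_{\xi<\kappa}(F\cap B_\xi)$ exhibits a set of positive $s'$-dimensional Hausdorff measure as an increasing, well-ordered union of length $\kappa$ of $\mathcal{H}^{s'}$-null pieces. The whole point is that, under the perfect set property, this is impossible.

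To extract the contradiction I would analyse the jump stages $J=\{\beta<\kappa\mid (F\cap A_\beta)\not\subseteq B_\beta\}$ (note $F\cap B_\xi$ can only grow at successor stages, since the filtration is continuous at limits). If $|J|<\kappa$, then by regularity $J$ is bounded by some $\xi^*<\kappa$, whence $F=F\cap B_{\xi^*}$ and $\mathcal{H}^{s'}(F)=0$, contradicting $\mathcal{H}^{s'}(F)>0$. Hence $|J|=\kappa$. For each $\beta\in J$ choose $y_\beta\in(F\cap A_\beta)\setminus B_\beta$; if $\beta<\beta'$ in $J$ then $y_\beta\in B_{\beta+1}\subseteq B_{\beta'}$ while $y_{\beta'}\notin B_{\beta'}$, so the $y_\beta$ are pairwise distinct. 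Thus $\{y_\beta\mid\beta\in J\}$ is a well-orderable set of $\kappa$ many distinct reals. But by Theorem \ref{theorem: conseuqnce of std}(2) every uncountable set of reals has a perfect subset, so every well-orderable set of reals is countable (an uncountable one would contain a perfect set, making $\mathbb{R}$ well-orderable and yielding a nonmeasurable set, against $\sTD$). Therefore $\kappa\le\omega$, contradicting that $\kappa$ is uncountable, and the induction is complete.

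The step I expect to be the main obstacle is the measure/combinatorics interface in the last paragraph: one must show that positive $\mathcal{H}^{s'}$-measure is genuinely incompatible with a well-ordered union of $\mathcal{H}^{s'}$-null slices. The key design choice that makes this elementary is to work directly with the metric outer measure $\mathcal{H}^{s'}$ (using only its monotonicity and its vanishing on sets of dimension $<s'$) rather than trying to sum measures across the uncountable index $\kappa$; this lets the perfect set property do the work and entirely sidesteps any appeal to $\kappa$-additivity of the null ideal or to the deeper descriptive-set-theoretic machinery used by Crone, Fishman and Jackson. I would remark that the identical argument, using Theorem \ref{theorem: cfj theorem under zfstd}(2) and the packing measure $\mathcal{P}^{s'}$ in place of $\mathcal{H}^{s'}$, gives the analogous statement $\DP(A)=\sup_{\alpha<\kappa}\DP(A_\alpha)$ for packing dimension.
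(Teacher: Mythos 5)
Your easy reductions are fine (monotonicity, the least-counterexample induction, the singular-to-regular step, and the bounded-$J$ case), but the proof breaks at its crucial step: the selection of the points $y_\beta$. When $|J|=\kappa\geq\omega_1$ you must \emph{simultaneously} choose one real $y_\beta\in(F\cap A_\beta)\setminus B_\beta$ for $\kappa$ many $\beta$. That is a well-ordered choice of length $\kappa$ over nonempty sets of reals, and it is not available in $\ZF+\AD+\mathrm{DC}$: dependent choice only yields countable choice for reals. Worse, under $\AD$ such a selector provably \emph{cannot} exist, since (as you yourself observe) its range would be an uncountable well-ordered family of distinct reals. So the impossibility of the selector is not a contradiction with the case hypothesis $|J|=\kappa$; it only shows that your intermediate object can never be built. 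What the case hypothesis actually gives you is a $\kappa$-indexed family of pairwise disjoint nonempty sets of reals together with a surjection from $F$ onto a size-$\kappa$ set of ordinals (send $y$ to the least $\beta$ with $y\in A_\beta$), and both of these are perfectly consistent with $\AD$: for instance the sets $W_\alpha$ of codes of well-orderings of $\omega$ of order type $\alpha$, for $\alpha<\omega_1$, form exactly such a disjoint family, and $\mathbb{R}$ surjects onto $\omega_1$. Hence no contradiction follows and the case $|J|=\kappa$ is never ruled out.

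Note where the real content lies. The principle you need at that point --- a set of positive (indeed infinite) $\mathcal{H}^{s'}$-measure cannot be an increasing well-ordered union of $\mathcal{H}^{s'}$-null sets --- is, modulo your reductions, a restatement of the theorem being proved: the theorem says precisely that the ideal of sets of Hausdorff dimension $\leq d$ is closed under well-ordered unions. So the proposal concentrates all of the difficulty into one step and then disposes of that step by an argument that choicelessness forbids; the perfect set property alone cannot carry it. This is exactly the point where Crone, Fishman and Jackson need their machinery under $\AD+\mathrm{DC}$ (the paper describes their proof as ``direct and uses some rather deep results from set theory''), and it is why the present paper does not reprove their theorem but only a weaker $\kappa=\omega_1$ version, Theorem~\ref{theorem: bounding dimension}, under $\ZF+\sTD$, by a genuinely different route: the Lutz--Lutz point-to-set principle (Theorem~\ref{theorem: ll theorem}), lowness for effective dimension (Theorem~\ref{theorem: low for hd}), and the ordinal-bounding Lemma~\ref{lemma: bounding}, none of which require selecting uncountably many reals. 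To fix your argument you would need an input of that kind; as written, the key step is a gap that cannot be closed by the tools you invoke.
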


We  may provide   an ``elementary" proof of the following weaker result under $\ZF+\sTD$.
 
 \begin{theorem}\label{theorem: bounding dimension}
 Assume $\ZF+\sTD$. If $A=\bigcup_{\alpha<\omega_1}A_{\alpha}$, then  $$\DH(A)=\sup\{\DH( A_{\alpha})\mid \alpha<\omega_1\} \mbox{ and }\DP(A)=\sup\{\DP( A_{\alpha})\mid \alpha<\omega_1\}.$$
 \end{theorem}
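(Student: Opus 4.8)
The inequality $\sup\{\DH(A_\alpha)\mid \alpha<\omega_1\}\le \DH(A)$ (and likewise for $\DP$) is immediate from monotonicity of dimension, so the entire content is the reverse inequality; I will concentrate on $\DH$, the packing case being word–for–word the same after replacing the $\liminf$ formula of Theorem \ref{theorem: ll theorem} by the $\limsup$ one and the first tree of Theorem \ref{theorem: low for hd} by the second. Write $d=\sup_\alpha \DH(A_\alpha)$ and suppose toward a contradiction that $\DH(A)>q>d$ for some rational $q$. For a real $x$ set $\dim^x(y)=\liminf_{n} K^x(y\uh n)/n$, so that Theorem \ref{theorem: ll theorem} reads $\DH(B)=\inf_x\sup_{y\in B}\dim^x(y)$; note that $x_0\le_T x_1$ forces $\dim^{x_1}(y)\le \dim^{x_0}(y)$ for all $y$, while $x_0\equiv_T x_1$ gives $\dim^{x_0}=\dim^{x_1}$. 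For each $\alpha<\omega_1$ put $G_\alpha=\{x\mid \forall y\in A_\alpha\ \dim^x(y)<q\}$. Since $\DH(A_\alpha)\le d<q$, Theorem \ref{theorem: ll theorem} makes $G_\alpha$ nonempty, and it is upward closed under $\le_T$, hence contains a cone. The guiding observation is that any $z^\ast\in\bigcap_{\alpha<\omega_1}G_\alpha$ would satisfy $\dim^{z^\ast}(y)<q$ for every $y\in A$, giving $\DH(A)\le q$, a contradiction. Thus it suffices to produce a single real lying in all of the $G_\alpha$ simultaneously.

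The new ingredient beyond $\TD$ is a cofinality property of the Turing degrees extracted from $\sTD$. By Theorem \ref{theorem: conseuqnce of std}, $\sTD$ implies that every set of reals is measurable and that every uncountable set has a perfect subset; together these show that $\omega_1$ does not inject into $\R$, for an injection would produce an uncountable well-ordered set of reals whose perfect subset (which exists by the perfect set property) would then carry a well-ordering and hence a Bernstein-type nonmeasurable subset, contradicting measurability. Consequently every $\omega_1$-sequence of reals has countable range, and a countable set of reals is bounded in $\le_T$ by the join of its members (using $\CCR$); hence the Turing degrees have cofinality strictly greater than $\omega_1$, i.e. every $\omega_1$-indexed sequence of degrees possesses an upper bound. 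This is the precise anti–choice strength of $\sTD$ that rules out the Besicovitch–Davis failures obtained under $V=L$.

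The plan is then to feed the family $\langle G_\alpha\rangle_{\alpha<\omega_1}$ into this cofinality property: I want to upgrade the mere existence of a cone inside each $G_\alpha$ to an honest $\omega_1$-sequence of cone–bases $\langle b_\alpha\rangle$, for once such a sequence is in hand its upper bound $z^\ast$ computes every $b_\alpha$ and so lies in every $G_\alpha$, completing the argument. To choose the $b_\alpha$ definably I would invoke Lemma \ref{lemma: bounding} through the degree–invariant \emph{capture level} $F(x)=$ the least $\alpha$ with $A_\alpha\not\subseteq\{y\mid \dim^x(y)<q\}$: on the cone on which $F(x)<\omega_1^x$ the lemma forces $F$ to be constant, say with value $\beta_0$, which is impossible because $A_{\beta_0}$ has a good oracle $x_2$ and then $F(x)\ne\beta_0$ for all $x\ge_T x_2$. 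Assuming (harmlessly, since countable unions are dimension–stable under $\CCR$) that $\langle A_\alpha\rangle$ is increasing and continuous, this dichotomy together with $\CCR$ applied along the countable initial segments $\{\alpha<\beta\}$ should pin down a nondecreasing cofinal assignment $\beta\mapsto b_\beta$ of bases, to which the cofinality property of the preceding paragraph applies.

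The hard part will be exactly this amalgamation step. Lemma \ref{lemma: bounding} governs only the initial segment $\bigcup_{\alpha<\omega_1^x}A_\alpha$, which is a countable union and so has dimension $\le d$; but the dichotomy it produces shows that $F(x)<\omega_1^x$ \emph{cannot} hold on a cone (else $F$ would be eventually constant, contradicting the cofinality of $F$ into $\omega_1$), so relative to every oracle $x$ the dimension of $A$ is concentrated in the tail $\bigcup_{\alpha\ge\omega_1^x}A_\alpha$, which no single oracle can flatten. The crux is therefore to convert the per-piece good oracles into one $\omega_1$-sequence using nothing stronger than $\CCR$, and to make the Lemma \ref{lemma: bounding} selection of bases interact correctly with this tail; it is precisely here that the cofinality consequence of $\sTD$ (and not $\TD$ alone) must be used, and this is the step I expect to demand the most care.
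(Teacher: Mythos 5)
Your reduction to producing one oracle $z^\ast\in\bigcap_{\alpha<\omega_1}G_\alpha$, and the first horn of your dichotomy, are sound and in fact parallel the paper: the paper's proof works with essentially your capture function (its $\gamma_x$ is the least $\gamma$ such that $\bigcup_{\alpha<\gamma}A_\alpha$ already realizes dimension $\geq\DH(A)$ relative to $x$) and applies Lemma \ref{lemma: bounding} to it exactly as you do on the cone where $F(x)<\omega_1^x$. The genuine gap is your plan for the other horn. You propose to choose a cone base $b_\alpha$ inside each $G_\alpha$ and then bound the resulting $\omega_1$-sequence. But selecting one real from each of $\omega_1$ many nonempty sets of reals is an instance of $\mathrm{AC}_{\omega_1}(\mathbb{R})$, which is not available in $\ZF+\sTD+\CCR$; indeed it is refutable there, since choosing a code of a well-order of each countable order type would inject $\omega_1$ into $\mathbb{R}$, contradicting the very regularity properties you derive from $\sTD$. $\CCR$ applied to countable initial segments yields partial choice functions of countable length, and gluing those into an $\omega_1$-length assignment is again a choice principle that fails; Lemma \ref{lemma: bounding} selects a single ordinal on a cone, not a transfinite sequence of reals. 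So your ``cofinality of the Turing degrees exceeds $\omega_1$'' observation can never be fed the sequence it needs: the amalgamation step is not delicate, it is circular, because the only way to bound the family is to have already chosen from it.

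The missing idea, which is the heart of the paper's argument and which your outline never invokes at the crucial point, is Theorem \ref{theorem: low for hd} used as a \emph{boosting} device. Relativize it to any $z$ in the cone on which $F(x)\geq\omega_1^x$: one obtains a perfect tree $T\leq_T z'$ of reals low for $z$-relative effective dimension, and by coding any prescribed countable well-order into a path $w\in[T]$ one gets $x=z\oplus w\geq_T z$ with $\dim^x(\cdot)=\dim^z(\cdot)$ pointwise, hence $F(x)=F(z)$ (your $F$ is computed from the function $y\mapsto\dim^x(y)$), while $\omega_1^{x'}$ exceeds any prescribed $\beta<\omega_1$ (one runs the whole dichotomy with $\omega_1^{x'}$ in place of $\omega_1^x$; Lemma \ref{lemma: bounding} survives this replacement since the jump is arithmetic). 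Since $x$ lies in the cone, $F(z)=F(x)\geq\omega_1^{x'}>\beta$ for every $\beta<\omega_1$, contradicting $F(z)<\omega_1$. This closes the second horn with nothing beyond $\CCR$ and no $\omega_1$-sequence; it is precisely the paper's step ``for any $z$ there is $x>_T z$ with $\gamma_x=\gamma_z$ but $\omega_1^{x'}>\gamma_z$.'' Note also that the paper does not argue by contradiction at all: it shows $\gamma_x$ is constant, say $\eta$, on a cone, and then uses the countability of $\eta$ positively, extracting for each $m$ some $\alpha_m<\eta$ with $\DH(A_{\alpha_m})\geq\DH(A)-\tfrac1m$ via cofinality and the monotonicity of $x\mapsto\sup_{y}\dim^x(y)$.
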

 \begin{proof}

For any real $x$, let $r=\DH(A)$ and
$$\gamma_x=\min\{\gamma|\sup_{y\in \bigcup_{\alpha< \gamma}A_{\alpha}} \underline\lim_{n\to \infty}\frac{K^x(y\uh n)}{n}\geq r\}.$$ By Theorem \ref{theorem: ll theorem}, $\gamma_x$ is defined for every real $x$.
 
 For any real $z$, by Theorem \ref{theorem: low for hd} and the assumption, there is a real $x>_T z$ so that $\gamma_x=\gamma_z$ but $\omega_1^{x'}>\gamma_z$. So $$\gamma_x=\gamma_z<\omega_1^{x'}=\omega_1^{x}.$$
 
 In other words, $x\mapsto \gamma_x$ is a degree invariant function so that $\gamma_x<\omega_1^x$ over an upper cone of Turing degrees. Then by Lemma \ref{lemma: bounding}, $x\mapsto \gamma_x$ is a constant, say $\eta$,  over an upper cone. Then, by the countability of $\eta$, for any  $m\in \omega$, there must be some $\alpha_m<\eta$ so that the set $\{x\mid \sup_{y\in  A_{\alpha_m}} \underline\lim_{n\to \infty}\frac{K^x(y\uh n)}{n}\geq r-\frac{1}{m}\}$ ranges Turing degrees cofinally. Then  by Theorem \ref{theorem: ll theorem}, $\DH(A_{\alpha_m})\geq r-\frac{1}{m}$.  So $$\DH(A)=\sup\{\DH( A_{\alpha})\mid \alpha<\eta\}=\sup\{\DH( A_{\alpha})\mid \alpha<\omega_1\}.$$ 
 
 We leave the proof of the second part to readers.
 \end{proof}
 
We remark that the conclusion of Theorem \ref{theorem: bounding dimension} can also be proved within $\mathrm{ZFC}+\mathrm{MA}_{\aleph_1}$. Further more,  $\omega_1$ can be replaced with any cardinal $\kappa<2^{\aleph_0}$.
 
\bibliographystyle{plain}

\end{document}